\definecolor{grn}{rgb}{0,0.4,0}
\definecolor{dgrn}{rgb}{0.0,0.3,0.0}
\definecolor{dpur}{rgb}{0.3,0.0,0.6}
\numberwithin{equation}{section}
\newtheorem{prop}{Proposition}
\newtheorem{lemma}[prop]{Lemma}
\newtheorem{thm}[prop]{Theorem}
\newtheorem{cor}[prop]{Corollary}
\numberwithin{prop}{section}
\theoremstyle{definition}
\newtheorem{defn}[prop]{Definition}
\newtheorem*{thm*}{Theorem}
\newcommand{\brs}[1]{\left| #1 \right|}
\newcommand{\brk}[1]{\left[ #1 \right]}
\newcommand{\prs}[1]{\left( #1 \right)}
\newcommand{\sqg}[1]{\left\{ #1 \right\}}
\newcommand{\ip}[1]{\left\langle #1 \right\rangle}
\newcommand{\del}{\partial}
\newcommand{\hook}{\mathbin{\hbox{\vrule height2.4pt width4.5pt depth-2pt
\vrule height5pt width0.4pt depth-2pt}}}
\newcommand{\gG}{\Gamma}
\renewcommand{\gg}{\gamma}
\newcommand{\gL}{\Lambda}
\newcommand{\gd}{\delta}
\newcommand{\gU}{\Upsilon}
\newcommand{\gk}{\kappa}
\newcommand{\gl}{\lambda}
\newcommand{\bk}{\bar{k}}
\newcommand{\bi}{\bar{i}}
\newcommand{\bj}{\bar{j}}
\newcommand{\gw}{\omega}
\newcommand{\ga}{\alpha}
\newcommand{\gb}{\beta}
\renewcommand{\ge}{\epsilon}
\newcommand{\N}{\nabla}
\newcommand{\la}{\lambda}
\newcommand{\ten}{\otimes}
\newcommand{\til}[1]{\widetilde{#1}}
\newcommand{\nm}[2]{\brs{\brs{ #1}}_{#2}}
\renewcommand{\bar}[1]{\overline{#1}}
\newcommand{\hsp}{\hspace{0.5cm}}
\newcommand{\bRn}{\mathbb{R}^n}
\newcommand{\bR}{\mathbb{R}}
\newcommand{\CC}{\mathcal C}
\newcommand{\PP}{\mathcal P}
\newcommand{\YM}{\mathcal{YM}}
\DeclareMathOperator{\divg}{div}
\DeclareMathOperator{\supp}{supp}
\DeclareMathOperator{\End}{End}
\DeclareMathOperator{\Ad}{Ad}
\DeclareMathOperator{\Euc}{Euc}
\DeclareMathOperator{\refc}{ref}
\begin{document}

\title[Singularity formation of the Yang-Mills Flow]{Singularity formation of
the Yang-Mills Flow}

\author{Casey Kelleher}
\email{\href{mailto:clkelleh@uci.edu}{clkelleh@uci.edu}}
\address{Rowland Hall\\
         University of California\\
         Irvine, CA 92617}

\author{Jeffrey Streets}
\email{\href{mailto:jstreets@uci.edu}{jstreets@uci.edu}}
\address{Rowland Hall\\
         University of California\\
         Irvine, CA 92617}

\date{February 1, 2016}

\begin{abstract} We study singularity structure of
Yang-Mills flow in dimensions $n \geq 4$.  First we obtain a description of the
singular set in terms of concentration for a localized entropy quantity, which
leads to an estimate of its Hausdorff dimension.  We develop a theory of tangent
measures for the flow, which leads to a stratification of the singular set.  By
a refined blowup analysis
we obtain Yang-Mills connections or solitons as blowup limits at any point in
the singular set.
\end{abstract}
\maketitle

\section{Introduction}

Given $(M^n, g)$ a compact Riemannian manifold and $E \to M$ a vector
bundle, a one parameter family of connections $\N_t$ on $E$ is a solution to
Yang-Mills flow if
\begin{align*}
\frac{ \del \N_t}{\del t} &=\ - D_{\N_t}^* F_{\N_t}.
\end{align*}
This is the negative gradient flow for the Yang-Mills energy, and is a natural
tool for investigating its variational structure.  Global existence and
convergence of the flow in dimensions $n=2,3$ was established in \cite{Rade}. 
Finite time singularities in dimension $n=4$ can only occur via energy
concentration, as established in \cite{Struwe}.  More recently this result has
been refined in \cite{Feehan, Waldron} to show concentration of the self-dual
and antiself-dual energies.  Preliminary investigations into Yang-Mills flow in
higher dimensions have been made in \cite{Gastel, Naito,Weinkove}.

In this paper we establish structure theorems on the singular set for Yang-Mills
flow in dimensions $n \geq 4$.  Our results are inspired generally by results on
harmonic map flow, specifically \cite{LW1, LW2, LW3}.  The first main
result is a weak compactness theorem for solutions to Yang-Mills flow which
includes a rough description of the singular set of a sequence of solutions.  A
similar result for harmonic map flow was established in \cite{LW1}. 
Moreover, a related result on the singularity formation at infinity for a global
solution of Yang-Mills flow was established in \cite{HongTian}.  We include a
rough statement here, see Theorem \ref{fullweakcompactthm} for the precise
statement.

\begin{thm} \label{thm:weakcompactness} Fix $n\geq 4$ and let $E \to (M^n,g)$ be
a vector
bundle over a closed Riemannian manifold. Weak $H^{1,2}$ limits of
sequences of smooth solutions to Yang-Mills flow are weak solutions to
Yang-Mills flow which are smooth outside of a closed set $\Sigma$ of locally
finite
$(n-2)$-dimensional parabolic Hausdorff measure.
\end{thm}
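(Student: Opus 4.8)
The plan is to combine a localized, Struwe--Hamilton-type monotonicity formula with an $\epsilon$-regularity theorem, together with Uhlenbeck gauge fixing, and then to run a parabolic Vitali covering argument against the defect measure of the sequence, following the treatment of harmonic map flow in \cite{LW1} adapted to the gauge-theoretic setting. The first step is to record the a priori bounds: since the Yang--Mills energy is nonincreasing along the flow, a sequence $\{\N_i\}$ of smooth solutions whose time-slice energies are uniformly bounded --- which is forced by weak $H^{1,2}$ convergence once a background gauge is fixed --- satisfies $\sup_i \sup_t \YM(\N_i(t)) \leq \Lambda$ and $\sup_i \iint |D^*_{\N_i} F_{\N_i}|^2 \leq \Lambda$. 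I would then establish (or invoke, from the preceding monotonicity discussion) a monotone quantity $\Theta_{\N}(x_0,t_0;r)$, built from the backward heat kernel centered at $(x_0,t_0)$ and a spatial cutoff, which up to a multiplicative factor $e^{Cr}$ and an additive term $o(1)$ as $r\to 0$ is nondecreasing in $r$ and which, by construction, is invariant under the parabolic rescaling $(x,t)\mapsto(\lambda x,\lambda^2 t)$, $A\mapsto \lambda A(\lambda\cdot,\lambda^2\cdot)$.

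Second comes $\epsilon$-regularity: there are $\epsilon_0>0$ and constants $C_k$ so that if a smooth solution on the parabolic cylinder $P_{2r}(x_0,t_0)$ satisfies $\Theta_{\N}(x_0,t_0;r)<\epsilon_0$, then $\sup_{P_r(x_0,t_0)}|F_{\N}|\leq C_0 r^{-2}$ and, after passing to an Uhlenbeck--Coulomb gauge, $\|A\|_{C^k(P_{r/2})}$ is bounded; the proof is the standard blow-up/Bochner/Moser-iteration argument, in which Uhlenbeck's gauge theorem applies precisely because the curvature concentration is small. With this in hand I define, on the space-time domain,
\[
\Sigma \;=\; \bigcap_{r>0}\Big\{(x_0,t_0): \liminf_{i\to\infty}\Theta_{\N_i}(x_0,t_0;r)\geq \epsilon_0\Big\}.
\]
For a point off $\Sigma$ there is an $r>0$ and a subsequence along which $\Theta_{\N_i}<\epsilon_0$ throughout a parabolic neighborhood; $\epsilon$-regularity, interior parabolic estimates, and Arzel\`a--Ascoli then give local $C^\infty$ convergence, modulo gauge, of a further subsequence to a smooth Yang--Mills flow, and patching these local limits produces a smooth solution which, by uniqueness of weak limits and gauge invariance of smoothness, represents $\N_\infty$. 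Hence $\N_\infty$ is gauge-equivalent to a smooth solution on the complement of $\Sigma$, and $\Sigma$ is relatively closed since its complement is open by definition.

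Third, the dimension bound. Passing to a further subsequence, the measures $|F_{\N_i}|^2\,dV\,dt$ converge weakly to a Radon measure decomposing as $|F_{\N_\infty}|^2\,dV\,dt + \nu$ with $\supp\nu\subseteq\Sigma$; the key point is that monotonicity and the definition of $\Sigma$ force $\nu(P_r(x_0,t_0))\geq c\,\epsilon_0\,r^{n-2}$ for every $(x_0,t_0)\in\Sigma$ and all small $r$, where $P_r$ is the ball of the parabolic metric $d_P((x,t),(y,s))=\max\{|x-y|,|t-s|^{1/2}\}$. A parabolic Vitali ($5r$-covering) argument then yields $\mathcal{H}^{n-2}_P(\Sigma\cap K)\leq C\epsilon_0^{-1}\nu(K')<\infty$ for compact sets $K\subset K'$, which is the asserted local finiteness. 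Finally, $\N_\infty$ solves the flow classically off $\Sigma$ and lies in $H^{1,2}$ globally; since $\Sigma$ has parabolic Hausdorff dimension at most $n-2<n+2$ (in particular it is Lebesgue-null in space-time, and the finiteness of $\nu$ lets one treat it as a removable set in the relevant capacity), one tests the flow equation against compactly supported sections, uses the local smooth convergence and the global $H^{1,2}$ bound to pass to the limit in the divergence term and in the quadratic $[A\wedge F]$ terms, and a cutoff/removable-singularity argument across $\Sigma$ upgrades this to a weak solution on the full domain.

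I expect the principal difficulty to lie in the gauge-theoretic bookkeeping entangled with the limit: connections are not gauge invariant, so the ``$C^\infty_{loc}$ convergence modulo gauge'' away from $\Sigma$ --- which Uhlenbeck compactness supplies only on small balls --- must be patched into a global object consistent with the $H^{1,2}$ weak limit, and then the passage across the critical-dimensional set $\Sigma$ in the weak formulation, essentially a borderline-capacity removable-singularity statement, must exploit the finiteness of $\nu$ rather than a crude dimension count. Securing the localized $\epsilon$-regularity theorem in dimensions $n\geq 5$ in exactly the form needed, if it is not already available in \cite{Naito,Weinkove}, is the other substantial input.
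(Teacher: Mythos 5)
Your proposal follows essentially the same route as the paper: the Hong--Tian localized entropy monotonicity plus $\epsilon$-regularity to define the concentration set $\Sigma$, Uhlenbeck gauge fixing and patching for smoothness and subsequential $C^\infty_{loc}$ convergence off $\Sigma$, a Vitali covering with a density lower bound for the $(n-2)$-dimensional parabolic measure estimate, and a cutoff argument across $\Sigma$ (whose success rests exactly on the finiteness of that measure) to verify the weak formulation. The only cosmetic imprecision is attributing the lower density bound $\gtrsim \ge_0 r^{n-2}$ to the defect measure $\nu$ alone rather than to the full limit measure $\mu$; the covering argument needs only the latter, which is what the paper uses.
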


The first key ingredients of the proof are localized entropy monotonicities for
the
Yang-Mills flow, defined in \cite{HongTian}, together with a low-entropy
regularity theorem \cite{HongTian}.  Fairly general methods allow for the
existence of
the weak limit claimed in Theorem \ref{thm:weakcompactness}, and the entropy
monotonicities are the key to showing that the singular set is small enough to
ensure that the weak limit is a weak solution to Yang-Mills flow.  The arguments
are closely related to those appearing in
\cite{HongTian,LW1,Tian}.

The second main result is a stratification of the singular set.  This involves
investigating tangent measures associated to solutions of Yang-Mills flow.  In
particular we are able to establish the existence of a density for these
measures together with certain parabolic scaling invariance properties.  One
immediate
consequence is that we can apply the general results of \cite{White} to obtain a
stratification of the singular set.  
See \S \ref{sec:tmstrat} for the relevant definitions.
\begin{thm}  \label{dimredthm}
For $0 \leq k \leq n-2$ let
\begin{equation*}
\Sigma_k := \left\{ z_0 \in \Sigma \mid \dim \prs{ \Theta^0 \prs{\mu^*, \cdot}}
\leq
k, \forall \mu^* \in T_{z_0}(\mu) \right\}.
\end{equation*}
Then $\dim_{\mathcal{P}}\prs{\Sigma_k} \leq k$ and $\Sigma_0$ is countable.
\end{thm}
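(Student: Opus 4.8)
The plan is to deduce Theorem \ref{dimredthm} from the abstract dimension-reduction principle of \cite{White}, once the tangent-measure theory of \S\ref{sec:tmstrat} has been recast in the form that theorem requires. The inputs I will use are: (i) at every $z_0 \in \Sigma$ the set $T_{z_0}(\mu)$ of tangent measures is nonempty, the limit being produced by a parabolic blowup together with the local entropy monotonicity of \cite{HongTian}; (ii) each $\mu^* \in T_{z_0}(\mu)$ is invariant under the parabolic dilations of space-time fixing the origin, and carries a density $\Theta^0(\mu^*, \cdot)$ which is upper semicontinuous and attains its maximum precisely along a linear subspace, the \emph{spine} $V(\mu^*)$, so that $\dim\prs{\Theta^0(\mu^*,\cdot)} = \dim V(\mu^*)$ equals the dimension of the translation invariance of $\mu^*$; and (iii) on $\Sigma$ the density $\Theta^0(\mu,\cdot)$ is upper semicontinuous under the relevant weak convergence, bounded above on compact subsets of space-time by the finite entropy, and bounded below by the $\epsilon$-regularity threshold $\epsilon_0 > 0$ of \cite{HongTian} --- this last point being precisely why the complement of $\Sigma$ is the smooth set.

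Granting these, the stratification $\dim_{\mathcal{P}}(\Sigma_k) \leq k$ is formal. The one structural lemma to check is the closure of the class of tangent measures under iterated blowup: if $\mu^* \in T_{z_0}(\mu)$ and $\nu \in T_w(\mu^*)$ for some $w \in \supp(\mu^*)$, then $\nu$ is again invariant under the parabolic dilations fixing the origin, its spine contains $V(\mu^*) + \mathbb{R}w$, and $\nu \in T_{z_0}(\mu)$. The dilation invariance of $\mu^*$ about the origin is exactly what converts the rescalings about $w$ into additional translation invariances of $\nu$ along $\mathbb{R}w$; this is the classical Almgren--White computation, and the containment $\nu \in T_{z_0}(\mu)$ follows by a diagonal argument using the density bounds in (iii). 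With (i)--(iii) and this closure statement, \cite{White} applies and yields $\dim_{\mathcal{P}}(\Sigma_k) \leq k$ for each $k$. That the range $0 \leq k \leq n-2$ is the natural one is a consequence of Theorem \ref{thm:weakcompactness}: since $\Sigma$ has locally finite $(n-2)$-dimensional parabolic Hausdorff measure, the measure $\mu^*$ is supported on a blowup limit of $\Sigma$, so invariance of $\mu^*$ along a spine of parabolic dimension exceeding $n-2$ would spread it over a set of dimension larger than that of $\Sigma$; hence every $V(\mu^*)$ has parabolic dimension at most $n-2$, $\Sigma_{n-2} = \Sigma$, and the filtration is exhaustive.

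To improve $\dim_{\mathcal{P}}(\Sigma_0) \leq 0$ to countability of $\Sigma_0$, I would show that every $z_0 \in \Sigma_0$ is isolated in $\Sigma_0$ for the parabolic metric. If not, choose $z_j \in \Sigma_0 \setminus \sqg{z_0}$ with $z_j \to z_0$ and rescale the flow at $z_0$ to the parabolic scale $r_j = d_{\mathcal{P}}(z_0, z_j)$. Passing to a subsequence, the rescaled measures converge to some $\mu^* \in T_{z_0}(\mu)$ while the images of the $z_j$, which lie on the unit parabolic sphere about the origin, converge to a point $w$ with $d_{\mathcal{P}}(w, 0) = 1$; the lower density bound in (iii) together with the scale-invariance of $\Theta^0$ and its upper semicontinuity forces $\Theta^0(\mu^*, w) \geq \epsilon_0 > 0$, so $w \in \supp(\mu^*)$. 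Now take any $\nu \in T_w(\mu^*)$: by the closure statement above, $\nu \in T_{z_0}(\mu)$, while $\Theta^0(\nu, 0) = \Theta^0(\mu^*, w) \geq \epsilon_0 > 0$ shows $\nu$ is nontrivial and the spine of $\nu$ contains $\mathbb{R}w \neq \sqg{0}$. Hence $\dim\prs{\Theta^0(\nu, \cdot)} \geq 1$, contradicting $z_0 \in \Sigma_0$. A metric space in which every point is isolated is countable, so $\Sigma_0$ is countable.

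I expect the genuine work to lie not in this formal step but in verifying the hypotheses of \cite{White} in the present setting, where $\mu$ is a weak-$*$ limit of the curvature densities $\brs{F_{\N_t}}^2$ along Yang-Mills flow rather than one of the stationary geometric objects treated there. The delicate points are: showing that $\Theta^0$ is \emph{exactly} constant on a genuine linear subspace of space-time, not merely that the set where it is maximal is small; handling the forward/backward asymmetry of parabolic rescaling so that the spine is a well-defined linear object and the tangent-of-a-tangent identity goes through; and clarifying the contribution of a time-independent tangent measure (a Yang-Mills connection, which gains an extra time-translation invariance) versus a genuinely time-dependent one (a soliton). Once this bookkeeping is in place --- most of it already contained in \S\ref{sec:tmstrat} --- the conclusions follow as above.
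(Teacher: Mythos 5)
Your proposal is correct and follows essentially the same route as the paper: both reduce the theorem to White's abstract stratification result (\cite{White}, Theorem 8.2), with the required hypotheses supplied by the existence and compactness of tangent measures (Lemma \ref{tangmeasexist}) and their parabolic dilation invariance (Lemma \ref{lem:8.3.2LWYM}). The additional steps you sketch --- the tangent-of-a-tangent closure and the isolated-point argument for the countability of $\Sigma_0$ (which yields countability here only because $\Sigma_0$ lies in a separable metric space) --- are internal to White's theorem, which the paper simply cites rather than re-proves.
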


The third main theorem characterizes the failure of strong convergence in the
statement of Theorem \ref{thm:weakcompactness} in terms of the bubbling off of
Yang-Mills connections. 
Again, an analogous result for harmonic maps was established in \cite{LW1}.  The
proof requires significant further analysis on tangent measures, leading to the
existence of a refined blowup sequence which yields the Yang-Mills connection. 
We give a rough statement below, see
Theorem \ref{weaktostrongH12} for the precise statement.  

\begin{thm}\label{thm:weaktostrong} Fix $n\geq4$ and let $E \to
(M^n,g)$ be a vector bundle over a closed Riemannian manifold. A
sequence of
solutions to Yang-Mills flow converging weakly in $H^{1,2}$ either converges
strongly in $H^{1,2}$, and the $(n-2)$-dimensional parabolic Hausdorff measure
of
$\Sigma$ vanishes, or it admits a blowup limit which is a Yang-Mills connection
on $S^4$.
\end{thm}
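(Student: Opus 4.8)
The plan is to study the \emph{defect measure} of the sequence and establish a dichotomy: either it vanishes, which forces strong convergence, or it is nontrivial, in which case a refined blowup at a point of its support produces the Yang-Mills connection on $S^4$. This mirrors the harmonic map flow argument of \cite{LW1}, with Uhlenbeck gauge theory replacing the intrinsic estimates.

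First I would pass to a subsequence so that the curvature densities converge as Radon measures on spacetime, $\brs{F_{\N_i}}^2 \, dV_g \, dt \rightharpoonup \brs{F_{\N_\infty}}^2 \, dV_g \, dt + \nu$, where $\nu \geq 0$ is the defect measure. Using the low-entropy $\epsilon$-regularity theorem of \cite{HongTian} one checks that $\supp \nu \subset \Sigma$ and that $\nu$ carries a definite lower density bound $\Theta^0(\mu, z_0) \geq \epsilon_0 > 0$ at each of its points, since otherwise the localized entropy is eventually subcritical there and local uniform estimates on $F_{\N_i}$ modulo gauge apply. If $\nu \equiv 0$, then the localized entropy is subcritical at \emph{every} spacetime point for $i$ large, giving uniform local bounds on all derivatives of the curvature modulo gauge, hence strong $H^{1,2}$ convergence; since the limiting measure $\mu = \brs{F_{\N_\infty}}^2 \, dV_g \, dt$ is then absolutely continuous, its $(n-2)$-dimensional parabolic density vanishes $\mathcal{H}^{n-2}_{\mathcal{P}}$-almost everywhere, and a Vitali covering argument yields $\mathcal{H}^{n-2}_{\mathcal{P}}(\Sigma) = 0$. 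So assume $\nu \not\equiv 0$ and fix $z_0 \in \supp \nu$.

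Next I would invoke the tangent measure theory of \S\ref{sec:tmstrat}: every $\mu^* \in T_{z_0}(\mu)$ is invariant under parabolic dilations and splits off a Euclidean spine of dimension $\leq n-2$ (the content underlying Theorem \ref{dimredthm}), so $\mu^*$ is realized by a self-similar weak Yang-Mills flow — but not yet a Yang-Mills connection. To upgrade this I would perform a \emph{refined} blowup directly on the sequence $\N_i$ rather than on $\N_\infty$: select points $(x_i,t_i) \to z_0$ and scales $r_i \to 0$ normalized so that a fixed amount $\epsilon_0/2$ of curvature energy sits in the unit parabolic cylinder of the rescaled flow $\N_i^{r_i}$ while none concentrates at smaller scales. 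Since $r_i \to 0$, the parabolic time interval opens to $(-\infty,0]$ and, by localized entropy monotonicity, the entropy of $\N_i^{r_i}$ becomes asymptotically constant in $i$; a rigidity argument then forces the limit flow to be \emph{static}, i.e.\ a Yang-Mills connection $\N_\infty^*$ on $\mathbb{R}^n$, translation-invariant along the spine, with finite and (by the normalization) nonzero Yang-Mills energy. Restricting to a transverse $\mathbb{R}^4$ and using Uhlenbeck's removable singularity theorem together with the conformal invariance of the Yang-Mills energy in dimension four, $\N_\infty^*$ extends to a nontrivial Yang-Mills connection on $S^4$.

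The main obstacle is this refined blowup step, where three delicate issues must be handled simultaneously: (i) the gauge theory — one must run Uhlenbeck compactness to obtain local connection representatives converging smoothly modulo gauge while tracking any further curvature concentration in the limit; (ii) the rigidity — showing the chosen parabolic scaling genuinely annihilates the time dependence, so the limit is a static Yang-Mills connection and not a shrinking or expanding soliton, which depends on the precise point- and scale-selection interacting with the monotonicity formula; and (iii) the dimension reduction — a priori the spine could have dimension as large as $n-2$, so the blowup must be arranged so that the concentration set of $\N_\infty^*$ has codimension at least four and a generic transverse slice is $\mathbb{R}^4$. Balancing noncollapsing (enough energy survives the rescaling) against this codimension control is the technical heart of the argument; with it in place, Uhlenbeck's removable singularity theorem completes the identification of the bubble with a Yang-Mills connection on $S^4$.
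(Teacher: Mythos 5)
Your architecture matches the paper's: defect measure dichotomy, tangent measures at a point of $\supp\nu$, a refined second blowup on the original sequence, and a transverse $\mathbb{R}^4$ slice yielding the bubble. The trivial direction ($\nu\equiv 0$ implies strong convergence and $\mathcal{P}^{n-2}(\Sigma)=0$) is fine. The problem is that the three items you list as ``obstacles'' are not side issues to be balanced -- they are the proof, and your proposal asserts rather than establishes them. Most seriously: (a) the codimension--four structure of the tangent measure is not something you can ``arrange'' by a clever choice of blowup points; it is a theorem about $\mu^*$. The paper proves $\mathcal{H}^{n-4}(\Sigma^*_t)>0$ by contradiction: if the time slice had zero $(n-4)$--measure, covering it by balls $B_{r_j}(x_j)$ and using the $\Phi$--monotonicity to bound $r_j^{4-n}\int_{B_{r_j}}|F|^2$ uniformly shows the total energy in $B_1$ drops below $\ge_0$, contradicting $(0,0)\in\Sigma^*$. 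The matching upper bound (that $\supp\mu^*_t$ is exactly an $(n-4)$--plane) then combines the local finiteness of $\mathcal{P}^{n-2}(\Sigma)$, the time--independence and translation/dilation invariance from the tangent--measure structure theory, and the approximate continuity of the density to kill the residual set. None of this appears in your sketch, and your remark that ``a priori the spine could have dimension as large as $n-2$'' conflates parabolic and spatial dimensions: since the singular set of $\mu^*$ is time--translation invariant, a spatial spine of dimension $d$ contributes parabolic dimension $d+2$, so $d\le n-4$ automatically once the structure theory is in place.

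(b) Your mechanism for staticity is wrong as stated. Asymptotic constancy of the localized entropy along the blowup annihilates the term $\bigl|\tfrac{x}{2t}\hook F - D^*F\bigr|^2$ and therefore yields a \emph{soliton}, not a static Yang--Mills connection; this is exactly why the paper's Corollary on arbitrary singularities has two alternatives. To get a static limit one needs the separate estimate that $r^{4-n}\int_{P_r(z)}|\partial_t\N^i|^2\to 0$ at $\mathcal{P}^{n-2}$--a.e.\ $z\in\Sigma$, which comes from the global $L^2$ spacetime bound on $\partial_t\N^i$ (energy monotonicity) together with a Vitali covering argument showing the exceptional set has finite $\mathcal{P}^{n-4}$, hence zero $\mathcal{P}^{n-2}$, measure. (c) Finally, your normalization ``a fixed amount of energy in the unit cylinder, none concentrating at smaller scales'' is not by itself implementable: to choose the second blowup scale $\gd_i$ and base point $x_i$ transverse to the spine one must first show that the transverse $4$--dimensional energy density is nearly constant along the spine directions; the paper does this via a divergence identity for $\partial/\partial y_k$ of the sliced energy combined with Allard's strong constancy lemma, fed by the maximal--function selection of a good slice $(y,t)$. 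Without that step the maximizing points could see all their energy absorbed into the translation--invariant background and the limit could be flat. As written, the proposal is a correct roadmap with the load--bearing lemmas missing.
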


A corollary of these theorems is the existence of a either Yang-Mills connection
or Yang-Mills soliton as a blowup limit of arbitrary finite time singularities. 
For type I singularities the existence of soliton blowup limits was established
in \cite{Weinkove}, following from the entropy monotonicity for Yang-Mills flow
demonstrated in \cite{Hamilton}.  The existence of soliton blowup limits for
arbitrary singularities of mean curvature flow was established in
\cite{Ilmanen}, relying on the structure theory associated with Brakke's weak
solutions.  A preliminary investigation into the entropy-stability of Yang-Mills
solitons
was undertaken in \cite{CZ} and \cite{KS}.  Those results now apply to studying
arbitrary
finite-time singularities of Yang-Mills flow, as all admit singularity models
which are either Yang-Mills connections or Yang-Mills solitons.

\begin{cor}\label{cor:weaktostrong} Fix $n\geq4$ and let $E \to (M^n,g)$ be a
vector
bundle over
a closed Riemannian manifold.  Let
$\N_t$ a smooth solution to Yang-Mills flow on $[0,T)$ such that $\limsup_{t \to
T}
\brs{F_{\N_t}}_{C^0} = \infty$.  There exist a sequence $\{(x_i,t_i,\gl_i)\}
\subset M \times [0,T)\times [0,\infty)$ such that the corresponding blowup
sequence converges modulo gauge transformations to
either
\begin{enumerate}
\item A Yang-Mills connection on $S^4$.
\item A Yang-Mills soliton.
\end{enumerate}
\end{cor}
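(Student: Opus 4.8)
The plan is to reduce the statement to an application of Theorem \ref{thm:weaktostrong} by constructing, from a finite-time singularity, an appropriate sequence of parabolic rescalings whose limiting behavior is governed by the weak compactness machinery. First I would choose, for each $i$, a spacetime point $(x_i,t_i)$ with $t_i \to T$ where $\brs{F_{\N_{t_i}}}(x_i)$ realizes (up to a fixed factor) the supremum over $M \times [0,t_i]$, and set $\gl_i := \brs{F_{\N_{t_i}}}(x_i)^{-1/2} \to 0$; this is the standard point-selection/blowup construction. One forms the rescaled solutions $\N_t^i$ on larger and larger parabolic regions, which are solutions to Yang-Mills flow with curvature bounded by a universal constant near the basepoint, and with a uniform bound on the localized entropy coming from the monotonicity of \cite{HongTian} applied at scale $\gl_i$. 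The entropy bound is exactly what is needed to place the sequence $\N_t^i$ into the hypotheses of the weak compactness theorem, so after passing to a subsequence we obtain a weak $H^{1,2}$ limit.

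Next I would invoke the dichotomy in Theorem \ref{thm:weaktostrong} for this rescaled sequence. In the first alternative the convergence is strong in $H^{1,2}$ and the parabolic $(n-2)$-Hausdorff measure of the singular set vanishes; in this case the curvature normalization $\brs{F_{\N_0^i}}(x_i) = 1$ passes to the limit, so the limit is nontrivial, and the no-concentration statement together with the curvature bound should upgrade the convergence to smooth convergence on compact sets (using the $\ge$-regularity theorem of \cite{HongTian} away from the measure-zero singular set, which is empty at the limiting scale by the smallness). One then analyzes the time dependence: either the limit is static in $t$, giving a Yang-Mills connection, or it moves, and the uniform entropy bound forces the limit to be a self-similar solution, i.e. a Yang-Mills soliton — this is where the soliton alternative in the conclusion appears, paralleling Ilmanen's argument for mean curvature flow \cite{Ilmanen} and the type I case of \cite{Weinkove}. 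In the second alternative of Theorem \ref{thm:weaktostrong} the rescaled sequence itself bubbles off a Yang-Mills connection on $S^4$, which directly produces conclusion (1) after composing the rescalings with the further blowup sequence extracting the bubble.

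The main obstacle I anticipate is controlling the interaction between the two scalings and ensuring nontriviality of the limit. Concretely: the curvature normalization is imposed at the time slice $t_i$, but to feed the sequence into Theorem \ref{thm:weaktostrong} one wants control on a full backward parabolic neighborhood, so one needs a lemma (of the type in \cite{Struwe, Waldron}) guaranteeing that once $\brs{F}$ is bounded at a point it is bounded on a definite parabolic neighborhood behind it — this requires the point selection to be done with respect to a parabolic rather than merely spatial supremum. Equally delicate is showing the blowup limit does not vanish: strong $H^{1,2}$ convergence only controls $\N$, not pointwise $F$, so one must either work in the first alternative where $\ge$-regularity gives local smooth convergence and hence $\brs{F_\infty}(x_\infty) = 1$, or, in the bubbling alternative, verify that the bubble carries a definite amount of energy bounded below by the $\ge$ from the regularity theorem. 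Once nontriviality and the parabolic neighborhood control are in hand, separating the static case from the self-similar case is a routine consequence of the entropy being constant in the limit, which forces the soliton equation.
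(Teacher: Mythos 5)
Your overall architecture --- reduce to the dichotomy of Theorem \ref{thm:weaktostrong}, read off a Yang-Mills connection on $S^4$ from the bubbling branch, and in the strong-convergence branch derive the soliton equation from constancy of the entropy, which kills the second term of (\ref{Psimono}) --- is the right one and matches the paper. The genuine gap is in your blowup construction. You select $(x_i,t_i)$ at curvature maxima and set $\gl_i = \brs{F_{\N_{t_i}}}(x_i)^{-1/2}$. With moving basepoints and curvature-adapted scales there is no reason for $\Psi_{z_i}(\gl_i R)$ to converge to a limit independent of $R$: the monotone quantity is $\Psi_{z_0}(r)$ for a \emph{fixed} center $z_0$, and constancy of the entropy along the rescaled sequence is automatic only when the center is fixed and the scales are arbitrary radii $r_i \to 0$ (then $\Psi_{z_0}(r_i R) \to \Theta(\mu,z_0)$ for every $R$ because $\Psi_{z_0}(r)$ has a limit as $r\to 0$). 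For a Type II singularity the curvature scale satisfies $\gl_i = o(\sqrt{T-t_i})$, the curvature-normalized blowup zooms past the self-similar scale, and the entropy of the limit at small radii can drop strictly below the density at the singular point; then the second term of (\ref{Psimono}) is not forced to vanish and the soliton conclusion fails. This is exactly the obstruction that confines the argument of \cite{Weinkove} to Type I singularities. The paper avoids it by fixing a center $z_0 \in \Sigma$ --- so that nontriviality is supplied by the entropy lower bound $\Theta(\mu,z_0) \geq \ge_0$ built into the definition of $\Sigma$, rather than by the pointwise curvature normalization --- and rescaling the parabolic balls $P_{r_i}(z_0)$ by arbitrary $r_i \to 0$ before invoking Theorems \ref{fullweakcompactthm} and \ref{weaktostrongH12}.

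A secondary point: in your strong-convergence branch you allow a ``static'' alternative yielding ``a Yang-Mills connection,'' but for $n>4$ a static limit of a full $n$-dimensional blowup would live on $\mathbb{R}^n$, not $S^4$; the $S^4$ alternative of the corollary arises only from the bubbling branch of Theorem \ref{thm:weaktostrong}, whose refined second blowup produces a four-dimensional bubble translation-invariant in the remaining $n-4$ directions. In the strong-convergence branch the paper concludes soliton, full stop.
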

\subsection*{Acknowledgements}

The first author gratefully thanks Osaka University mathematics department, in
particular Toshiki Mabuchi and Ryushi Goto where much of the preliminary work
was performed, for their warm hospitality.  The first author also sincerely thanks Gang Tian and all of Princeton University, where much of the intermediate and final work was conducted, for providing such a friendly and productive atmosphere. The first author was supported by an
NSF Graduate Research Fellowship DGE-1321846.  The second author was supported
by the NSF via DMS-1341836, DMS-1454854 and by the Alfred P. Sloan Foundation
through a Sloan Research Fellowship.

\section{Background}

We will begin with a discussion of notation and conventions that are used
throughout the paper. We will then provide general analytic
background as well as a review of Yang-Mills flow and its key properties.

\subsection{Notation and conventions}\label{ss:notconv}

Let $(E,h) \to (M,g)$ be a vector bundle over a closed Riemannian manifold. Let
$S(E)$ denote the smooth
sections of $E$. For each point $x \in M$ choose a local orthonormal basis of
$TM$ given by $\{ \del_i \}$ with dual basis $\{ e^i \}$ and a local basis for
$E$ given by $\{ \mu_{\ga} \}$ with dual basis $\{ (\mu^*)^{\ga} \}$ for the
dual $E^*$. Let
$\Lambda^p(M)$ denote the set of smooth $p$-forms over $M$ and set $\Lambda^p(E)
:= \Lambda^p(M) \otimes S(E)$. Next set $\End E := E \ten E^*$, where $E^*$
denotes the dual space of $E$ and take
\begin{equation*}
\Lambda^p( \Ad E) := \{ \gw \in \Lambda^p(\End E) \mid h_{\ga \gg}
\gw_{\gb}^{\gg} = -
h_{\gb \gg} \gw_{\ga}^{\gg} \}.
\end{equation*}
The set of all bundle metric compatible connections on $E$ will be denoted by
$\mathcal{A}_{E}(M)$.  Thus, given a chart
containing $p \in M$ the action of a
connection $\N$ on $E$ is captured by the \textit{coefficient
matrices} $\Gamma  = ( \gG_{i \ga}^{\gb} e^i \otimes \mu_{\gb} \otimes
\mu_{\ga}^* )$, where
\begin{align*}
\N \mu_{\gb} &= \gG_{i \gb}^{\gd} e^i \otimes \mu_{\gd}.
\end{align*}
When
sequences of one-parameter families of connections $\{\N_t^i \}$ are in play we
will at times drop the explicit dependence on $t$ and $i$ for notational
simplicity.

\subsection{Weak solutions of Yang-Mills flow}

We first recall here the definitions of Sobolev spaces relevant to discussing
convergence of connections.  Refer to (\cite{Struwe} \S 1.3) for further
information. Using this we give the definition of a weak solution to Yang-Mills
flow.
\begin{defn}\label{defn:SobAdE} Fix $\N_{\refc}$ a background connection on $E$.
 The space $H^{l,p}(\Lambda^i(\Ad E))$ is the
completion of the space of smooth sections of $\Lambda^i(\Ad E)$ with respect to
the norm
\begin{equation*}
\brs{\brs{ \Upsilon }}_{H^{l,p}(\Lambda^i(\Ad E))} := \prs{\sum_{k=0}^l
\brs{\brs{ \N^{(k)}_{\refc} \Upsilon }}_{L^p(\Lambda^i(\Ad E))}^p }^{1/p} <
\infty.
\end{equation*}
We will say that a connection $\N$ is of
\emph{Sobolev class $H^{l,p}$}, and write $\N \in H^{l,p}$, if $\N = \N_{\refc}
+ \Upsilon$ where $\Upsilon
\in H^{l,p}\prs{\Lambda^1(\Ad E)}$.
\end{defn}
Now, for a vector bundle $E \to (M,g)$ over a Riemannian manifold, recall that
the
\emph{Yang-Mills energy} of a smooth connection $\N$ on $E$ with curvature
$F_{\N}$ is
\begin{equation*}\label{eq:YMe}
\mathcal{YM}(\N) := \frac{1}{2} \int_{M} \brs{F_{\N}}^2 \, dV.
\end{equation*}
From this we can consider the corresponding negative gradient flow, which is
easily shown to be the 
\emph{Yang-Mills flow}:
\begin{gather*}
\begin{split}
\left. \frac{\del \N_t}{\del t} \right. =&\ - D_{\N_t}^* F_{\N_t}.
\end{split}
\end{gather*}
With these definitions in place we can now define the notion of a weak solution
to the flow.

\begin{defn}\label{def:YMfwksol} A one-parameter
family $\N_t =
\N_{0} + \gU_t$ is a \emph{weak solution of Yang-Mills flow} on $[0,T]$ if
\begin{align*}
\gU_t \in L^1([0,T]; L^2(\Lambda^1(\Ad E))), \qquad F_{\N_t} \in
L^{\infty}([0,T];
L^2(\Lambda^2(\Ad E))),
\end{align*}
and if for all $\ga_t \in C^{\infty}([0,T]; H_1^2(\Lambda^2(\Ad E)))$ which
vanish at
$t=0,
t=T$, one has
\begin{align} \label{weaksoln}
\int_0^T \int_M \ip{\gU_t, \frac{\del \ga_t}{\del t}} - \ip{F_{\N_t}, \N_t
\ga_t}
\, dV\, dt = 0.
\end{align}
\end{defn}

\subsection{Blowup constructions}\label{ss:blowupconst}

Here we will give a discussion of the construction of blowup limits in the
setting of Yang-Mills flow.    First we define the fundamental scaling law.
\begin{defn}\label{defn:rescaledconn} Fix $U \subset \mathbb R^n$ and consider
the restricted bundle $E \to U$. Suppose $\N_t$ is a smooth solution to
Yang-Mills flow over $U$ on $[0,T)$.  Fixing a basis for $E$, $\N_t$ is
described by local coefficient matrices
 $\gG_t$.  Given $z_0 = (x_0,t_0) \in U \times [0,T)$ and $\la \in \bR$ we
define a connection $\N^{\la,z_0}_t$ via
coefficient matrices
\begin{align}
  \gG^{\la,z_0}_{t}\prs{x} = \la  \gG_{\la^2 t + t_0} \prs{ \la x + x_0}.
 \end{align}
 Typically the basepoint $z_0$ will be suppressed notationally when understood.
\end{defn}

Now consider a sequence $\sqg{(x_i,t_i,\la_i)} \subset M \times \bR \times
[0,\infty)$ with $\la_i \to 0$. Assuming $M$ is
compact there exists a subsequence such that $\{x_i\} \to x_{\infty} \in M$. 
Moreover,
we can pick a chart around $x_{\infty}$ so that the tail of the
sequence $\{ x_i \}$ is contained within this chart, identified with $B_1
\subset \mathbb R^n$.
For sufficiently large $i$, define a connection $\N_t^i$ via coefficient
matrices
\begin{equation*}
\Gamma^i_t(x):= \gG^{\gl_i,z_i}_t(x).
\end{equation*}
We call $\sqg{\N_t^i}$ an \emph{$(x_i,t_i,\la_i)$-blowup sequence}.  Note the
corresponding curvatures are scaled in the following manner,
\begin{align}\label{eq:curvrescale}
F_{\N_t^i}(x) = \la_i^2 F_{\N_{\la_i^2 t + t_i}} \prs{\la_ix + x_i}.
\end{align}
Observe that the domain of $\N^i_t$ contains $B_{\la_i^{-1}}(x_i) \times [
\frac{-t_i}{\la_i^2},
\frac{T - t_i}{\la_i^2}]$, so that the limiting domain is $\mathbb R^n \times
(-\infty,0]$.  If the points are chosen as a maximal blowup sequence so that the
curvatures are bounded, then these blowup solutions converge to a smooth ancient
solution to Yang-Mills flow.  However, in our analysis though we will be
choosing very general sequences and taking weak limits.

\subsection{Parabolic Hausdorff measures}

For any $0 \leq k \leq n$ and any $\Omega \subset \mathbb{R}^n$, the
$k$-dimensional Hausdorff measure of $\Omega$ is defined by
\begin{align*}
\mathcal{H}^{k}(\Omega)
&= \lim_{\gd \to 0} \mathcal{H}_{\gd}^{k}(\Omega)= \liminf_{\gd \to 0}
\left\{ \sum_i r_i^{k} \mid \Omega \subset \bigcup_i B_{r_i}(z_i), z_i \in
\Omega,
r_i \leq \delta \right\}.
\end{align*}
This leads to the definition of \emph{Hausdorff dimension}, i.e.
\begin{align*}
\dim_{\mathcal H}(\Omega) =&\ \inf\ \{d \geq 0\ |\ \mathcal H^d(\Omega) = 0 \}.
\end{align*}
Next, we define the \emph{parabolic metric} $\varrho$ on $\bRn \times \bR$ given
by, for $\prs{x,t},\prs{y,s} \in \bRn \times \bR$,
\begin{equation*}
\varrho \prs{\prs{x,t},\prs{y,s}} := \max \left\{ \brs{x-y}, \sqrt{\brs{t-s}}
\right\}.
\end{equation*}
Using this metric we can obtain the notion of parabolic Hausdorff dimension by
using covers by balls with respect to this metric.  In particular, for any $0
\leq \ell \leq n+2$ and any $\Omega \subset \mathbb{R}^n
\times \mathbb{R}$, the $\ell$-dimensional parabolic Hausdorff measure of
$\Omega$ is given by
\begin{align*}
\mathcal{P}^{\ell}(\Omega)
&= \lim_{\gd \to 0} \mathcal{P}_{\gd}^{\ell}(\Omega)= \liminf_{\gd \to
0} \left\{ \sum_i r_i^{\ell} \mid \Omega \subset \bigcup_i P_{r_i}(z_i), z_i \in
\Omega, r_i \leq \delta \right\},
\end{align*}
where, for $z_0 = (x_0,t_0) \in \bRn \times \bR$,
\begin{equation*}
P_r(z_0) := \left\{ z = (x,t)  \in \bRn \times \bR \mid \brs{x-x_0} < r,
\brs{t-t_0} < r^2
\right\}.
\end{equation*}
Using this we can then define the \emph{parabolic Hausdorff dimension}
\begin{align*}
\dim_{\mathcal P}(\Omega) := \inf\ \{ d \geq 0\ |\ \mathcal P^d(\Omega) = 0 \}.
\end{align*}
\section{Monotonicity Formulas} \label{s:monotonform}

In this section we observe some energy and entropy monotonicity formulas for
solutions to Yang-Mills flow which are central to the analysis below.

\subsection{Energy monotonicity}

\begin{lemma}\label{lem:8.2.1LWYM} Let $\N_t$ be a solution to Yang-Mills flow
on $M \times [t_1,t_2]$.  For any $\phi \in
C^1_0(M,[0,\infty))$,
\begin{align*}
\frac{1}{4}\int_M \prs{ \brs{F_{\N_{t_1}}}^2 - \brs{F_{\N_{t_2}}}^2 } \phi^2 \,
dV 
&= \int_{t_1}^{t_2} \int_M \prs{ \brs{\frac{\del \N_t}{\del t}}^2  + \ip{\frac{2
\N_t
\phi}{\phi} \hook F_{\N_t}, \frac{\del \N_t}{\del t}} }\phi^2 \, dV \, dt.
\end{align*}
\begin{proof}
We differentiate and find that
\begin{align*}
\tfrac{d}{dt} \brk{\tfrac{1}{2}\int_M \brs{F}^2 \phi^2 \, dV }
&= \int_M \ip{ F,\tfrac{\del F}{\del t} } \phi^2 \, dV \\
&= \int_M \ip{ F, D \brk{ \tfrac{\del \N}{\del t} } } \phi^2 \, dV \\
&= 2 \int_M \ip{ F,\N \brk{ \tfrac{\del \N}{\del t} } } \phi^2 \, dV \\
&= 2 \int_M \ip{ D^* F - 2 \tfrac{\N \phi}{\phi} \hook F, 
\tfrac{\del \N}{\del t}  } \phi^2 \, dV \\
&= 2 \int_M \ip{ - \tfrac{\del \N}{\del t} - 2 \tfrac{\N \phi}{\phi} \hook F, 
\tfrac{\del \N}{\del t} } \phi^2 \, dV \\
&= -2 \int_M \prs{ \ip{ \tfrac{\del \N}{\del t}, 2 \tfrac{\N \phi}{\phi} \hook
F} +
\brs{\tfrac{\del \N}{\del t}}^2} \phi^2 \, dV.
\end{align*}
Integrating both sides over $\brk{t_1,t_2}$ yields the result.
\end{proof}
\end{lemma}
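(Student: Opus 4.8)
The goal is the energy monotonicity formula of Lemma \ref{lem:8.2.1LWYM}. The plan is to differentiate the weighted energy $\tfrac12\int_M \brs{F_{\N_t}}^2 \phi^2 \, dV$ in $t$ and identify the right-hand side.

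First I would use that $\phi$ and $dV$ are time-independent, so only $\brs{F}^2$ varies; thus the derivative is $\int_M \ip{F, \tfrac{\del F}{\del t}} \phi^2 \, dV$. Next I would invoke the standard variation formula for curvature under a variation of the connection, $\tfrac{\del F_{\N_t}}{\del t} = D_{\N_t}\bigl(\tfrac{\del \N_t}{\del t}\bigr)$, where $D_{\N_t}$ is the exterior covariant derivative on $\Ad E$-valued forms. This converts the integrand into $\ip{F, D(\tfrac{\del \N}{\del t})} \phi^2$. The factor of $2$ appearing in the displayed computation reflects a convention in which the relevant first-order operator acting on the $\Ad E$-valued one-form $\tfrac{\del\N}{\del t}$ is normalized so that $D = 2\N$ on such forms in the sense used here; I would simply follow that convention consistently.

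The main step is the integration by parts: moving $D$ off of $\tfrac{\del\N}{\del t}$ and onto $F\phi^2$. Since $D^*$ is the formal adjoint of $D$, one has $\int_M \ip{F, D\eta}\phi^2\, dV = \int_M \ip{D^*(F\phi^2), \eta}\, dV$, and expanding the adjoint with the $\phi^2$ weight produces $\int_M \ip{\phi^2 D^*F - 2\phi\,\N\phi \hook F, \eta}\,dV$, i.e. the $\phi^2 D^*F$ term together with the correction term $-2\tfrac{\N\phi}{\phi}\hook F$ carrying the weight. This is the one genuinely delicate point, and I expect the bookkeeping of the weight and the contraction $\hook$ to be where care is needed; it is exactly the interior-derivative term that appears in the final formula. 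Then I would substitute the Yang-Mills flow equation $\tfrac{\del\N}{\del t} = -D^*F$, which replaces $D^*F$ by $-\tfrac{\del\N}{\del t}$, so the integrand becomes $\ip{-\tfrac{\del\N}{\del t} - 2\tfrac{\N\phi}{\phi}\hook F,\ \tfrac{\del\N}{\del t}}\phi^2$ up to the overall factor of $2$. Expanding the inner product gives $-2\int_M\bigl(\brs{\tfrac{\del\N}{\del t}}^2 + \ip{2\tfrac{\N\phi}{\phi}\hook F, \tfrac{\del\N}{\del t}}\bigr)\phi^2\, dV$.

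Finally I would integrate this identity in $t$ over $[t_1,t_2]$, obtaining
\begin{align*}
\tfrac12\int_M \brs{F_{\N_{t_2}}}^2\phi^2\,dV - \tfrac12\int_M \brs{F_{\N_{t_1}}}^2\phi^2\,dV = -2\int_{t_1}^{t_2}\!\!\int_M\Bigl(\brs{\tfrac{\del\N_t}{\del t}}^2 + \ip{\tfrac{2\N_t\phi}{\phi}\hook F_{\N_t}, \tfrac{\del\N_t}{\del t}}\Bigr)\phi^2\,dV\,dt,
\end{align*}
and dividing by $-4$ and rearranging yields the stated identity. No compactness or regularity subtleties arise here since $\N_t$ is assumed smooth and $\phi$ has compact support; the only real content is the weighted integration by parts in the third-to-last line of the computation.
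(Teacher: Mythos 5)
Your argument is correct and follows essentially the same route as the paper's proof: differentiate the weighted energy, use $\tfrac{\del F}{\del t} = D\brk{\tfrac{\del \N}{\del t}}$, integrate by parts against the weight $\phi^2$ to produce the $-2\tfrac{\N\phi}{\phi}\hook F$ correction term, substitute the flow equation, and integrate in time. The factor-of-$2$ convention you flag (coming from pairing the antisymmetric $F$ with $\N\eta$ versus $D\eta$) is exactly the step the paper takes in passing from $\ip{F, D\brk{\tfrac{\del\N}{\del t}}}$ to $2\ip{F,\N\brk{\tfrac{\del\N}{\del t}}}$, and your constants work out to the stated identity.
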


\subsection{Entropy setup and scaling laws} \label{entropysetup}

Let $(M,g)$ be a Riemannian manifold. Let $\iota_M > 0$ be a lower bound for the
injectivity radius of $M$. Note that if $\N_t$ is a smooth solution to
Yang-Mills flow on $M \times [0,T)$, we can restrict it to any coordinate
neighborhood $B_{\iota_M} \subset \bRn$ is the Euclidean ball in $\bRn$ centered
at the
origin. 
Now fix $z_0 := \prs{x_0,t_0} \in
\mathbb{R}^n \times [0,\infty)$, and define
\begin{equation*}
G_{z_0}\prs{x,t} = \frac{e^{-\frac{\brs{x-x_0}^2}{4\brs{t-t_0}}}}{\prs{4 \pi
\brs{t-t_0}}^{n/2}}.
\end{equation*} 
We need to move this function onto the manifold $M$, and so we must localize. 
For $x_0 \in M$ we let $\mathcal{B}_{x_0}$ denote the set of \emph{cutoff
functions}, that is, all $\phi \in
C_0^{\infty}\prs{B_{\iota_M}(x_0),[0,\infty)}$ such that
\begin{align*}
\phi \in \brk{0,1}, \qquad \phi \equiv& 1 \text{ on }
B_{\tfrac{\iota_M}{2}}(x_0), \qquad \supp \phi \subset B_{\iota_M}(x_0).
\end{align*}
In this sense, given $z_0 = (x_0,t_0) \in M \times \bR$, for $\phi \in
\mathcal{B}_{x_0}$ one may consider the globally defined function $\phi G_{z_0}:
M \times \mathbb R \to [0,\infty)$.  Lastly, given $z_0 =
(x_0,t_0) \in M \times \bR$ and $R \in (0,\infty)$, we define
\begin{align*}
S_R(t_0) &:= M \times \{ t_0 - R^2 \},\\
P_R(z_0) &:= B_{R}(x_0) \times \prs{\brk{t_0 - R^2, t_0} \cap
(0,\infty)},\\
T_R(t_0) &:= M \times \prs{ \brk{t_0 - 4 R^2, t_0 - R^2} \cap (0,\infty)}.
\end{align*}
\begin{defn} \label{def:PhiPsifunc} Assume $\N_t$ is a solution to
Yang-Mills flow on $M \times [0,T)$. For $z_0 = (x_0,t_0) \in
M \times [0,T)$, $\phi \in \mathcal{B}_{x_0}$, and $R \in [0,
\min\{\iota_M, \sqrt{t_0}/2\} ]$, let
\begin{align*}
\Phi_{z_0}( R; \N_t) &:= \frac{R^4}{2} \int_{S_{R}(t_0)} \brs{F_{\N_t}}^2\phi^2 
G_{z_0} \, dV,\\
\Psi_{z_0}(R; \N_t) &:= \frac{R^2}{2}\int_{T_{R}(t_0)} \brs{F_{\N_t}}^2
\phi^2 G_{z_0} \, dV \, dt.
\end{align*}
\end{defn}

Next we record a fundamental scaling law for the entropy functionals which is
utilized in deriving the monotonicity formulas under Yang-Mills flow.  These
monotonicity formulas are shown in (\cite{HongTian}), but we include some brief
disussion of some properties for convenience, and also because we utilize some
of the calculations in the sequel.  We restrict the lemma to flat space for
convenience.

\begin{lemma}  \label{lem:entropyscaling} Fix $\N_t$ a solution to
Yang-Mills flow on $\prs{\bRn, g_{\Euc}} \times [0,T)$.
For all $z_0 = (x_0,t_0) \in \mathbb{R}^n \times [0,T)$, and $\prs{0 < R \leq
\sqrt{t_0}/2}$, setting $\phi
\equiv 1$ in Definition \ref{def:PhiPsifunc} yields
\begin{align*}
 \Phi_{z_0}(R;\N_t) &=\ \Phi_{z_0}\prs{1; \N^R_t},\\
 \Psi_{z_0}(R;\N_t) &=\ \Psi_{z_0}\prs{1; \N^R_t},
\end{align*}
where here $\N^R_t$ is the rescaled connection as defined in Definition
\ref{defn:rescaledconn}.
 \begin{proof} Without loss of generality we may take $z_0 = 0$. For notational
convenience we suppress the subscripts on $\Phi$, $\Psi$, and $G$. We fix $R >
0$
and consider a change of coordinates
\begin{align*}
x = R y, \qquad t = R^2 s.
\end{align*}
Then, rescaling coordinates and recalling the rescaling of the curvature tensor
(\ref{eq:curvrescale}),
\begin{align*}
dx = R^n \, dy, \qquad dt = R^2 \, ds, \qquad G(x,t) = R^{-n} G(y,s), \qquad
F_{\N^R_{s}} (y) = R^{2} F_{\N_{R^2 s}} (R y).
\end{align*}
 It follows that
\begin{align*}
\Phi(R;\N_t) &=\ \frac{R^4}{2} \int_{S_1} \brs{F_{\N_{R^2 s}}(Ry)}^2 \phi(y)G(y,
s) \,
dy\\
&=\ \frac{1}{2} \int_{S_1} \brs{F_{\N^R_s}(y)}^2 \phi(y) G(y,s) \, dy\\
&=\ \Phi(1; \N^R_t).
\end{align*}
Similarly,
\begin{align*}
\Psi(R;\N_t) &=\ \frac{R^4}{2} \int_{T_1} \brs{F_{\N_{R^2 s}}(R y)}^2
\phi(y)G(y, s) \, dy
\, ds\\
&=\ \frac{1}{2} \int_{T_1} \brs{F_{\N^R_s}(y)}^2 \phi(y)G(y,s) \, dy\, ds\\
&=\ \Psi\prs{1 ; \N^R_t}.
\end{align*}
The result follows.
\end{proof}
\end{lemma}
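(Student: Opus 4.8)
The plan is to reduce both identities to a single parabolic change of variables; since we work on flat $\bRn$ and take $\phi \equiv 1$, no genuine localization is present. First I would use the spatial translation invariance of the flat metric and of the choice $\phi \equiv 1$, together with the time translation invariance of the flow equation, to assume $z_0 = 0$, and write $\Phi(R)$, $\Psi(R)$, $G$ for the quantities with basepoint suppressed. Then I would introduce the parabolic dilation $x = R y$, $t = R^2 s$ and record the three elementary rules it induces: $dV = dx = R^n\,dy$ on a fixed time slice, while $dV\,dt = R^{n+2}\,dy\,ds$ in spacetime; the backward heat kernel $G$ is parabolically homogeneous of degree $-n$, i.e. $G(Ry, R^2 s) = R^{-n} G(y,s)$, which is immediate from the defining formula since $\tfrac{\brs{Ry}^2}{4\brs{R^2 s}} = \tfrac{\brs{y}^2}{4\brs{s}}$; and, by the curvature scaling law \eqref{eq:curvrescale}, $F_{\N^R_s}(y) = R^2 F_{\N_{R^2 s}}(Ry)$, hence $\brs{F_{\N_{R^2 s}}(Ry)}^2 = R^{-4}\brs{F_{\N^R_s}(y)}^2$.

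Next I would note that this dilation carries the model domains exactly onto their unit-scale counterparts, $S_R$ to $S_1$ and $T_R$ to $T_1$, and that the hypothesis $R \le \sqrt{t_0}/2$ (equivalently $t_0 \ge 4R^2$) is precisely what guarantees $\N^R_t$ is defined on the time range needed to form $\Phi(1;\N^R_t)$ and $\Psi(1;\N^R_t)$. The conclusion is then a bookkeeping of powers of $R$. Substituting the three rules above into $\Phi(R) = \tfrac{R^4}{2}\int_{S_R}\brs{F_{\N_t}}^2 G\,dV$, the prefactor, the volume element, the curvature, and the Gaussian contribute $R^4$, $R^n$, $R^{-4}$, and $R^{-n}$ respectively, whose product is $1$; this leaves $\tfrac12\int_{S_1}\brs{F_{\N^R_s}}^2 G\,dy = \Phi(1;\N^R_t)$. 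The same substitution in $\Psi(R) = \tfrac{R^2}{2}\int_{T_R}\brs{F_{\N_t}}^2 G\,dV\,dt$ produces the factor $R^2 \cdot R^{n+2} \cdot R^{-4} \cdot R^{-n} = 1$, leaving $\Psi(1;\N^R_t)$, as claimed.

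I do not anticipate a real obstacle: this is an exact scaling identity proved by direct computation, and the reference \cite{HongTian} already contains the underlying monotonicity. The two points that warrant care are (i) verifying the domain correspondence of the previous paragraph, so that the integrals over $S_R$ and $T_R$ become integrals over the unit-scale domains with nothing left over, and (ii) recording that $\phi \equiv 1$ is exactly what permits the cutoff to be dropped from the computation; with a general $\phi \in \mathcal{B}_{x_0}$ the same change of variables instead produces the rescaled cutoff $\phi(R\,\cdot)$ in the integrand, which is the mechanism behind the error terms in the approximate monotonicity formulas used elsewhere in the paper.
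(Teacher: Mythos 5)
Your proposal is correct and follows essentially the same route as the paper: the parabolic substitution $x = Ry$, $t = R^2 s$ combined with the scaling laws $dV = R^n dy$, $G(Ry,R^2s)=R^{-n}G(y,s)$, and $F_{\N^R_s}(y)=R^2F_{\N_{R^2s}}(Ry)$, with the powers of $R$ cancelling exactly as you record. Your closing remarks on the domain correspondence and on what a general cutoff would produce are accurate and consistent with how the paper later handles the non-constant $\phi$ case.
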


\subsection{Entropy monotonicities}\label{ss:entropmon}

In this section we recall the monotonicity formulae for $\Phi$ and $\Psi$,
established in \cite{HongTian}.  Again we record the proof on $\mathbb R^n$ for
convenience and as we will use parts of argument in the sequel.

\begin{prop}\label{prop:energymonform} Let $\N_t$ to be a smooth solution to
Yang-Mills flow for $\prs{\bRn,
g_{\Euc}} \times [0,T)$.
For all $z_0 = (x_0,t_0) \in \mathbb{R}^n
\times [0,T)$, and $0
< \rho \leq r < \sqrt{t_0}/2$, setting $\phi
\equiv 1$ in Definition \ref{def:PhiPsifunc} yields
\begin{align*}
\Phi_{z_0}(\rho;\N_t) &\leq \Phi_{z_0}(r;\N_t)\\
\Psi_{z_0}(\rho;\N_t) &\leq \Psi_{z_0}(r;\N_t).
\end{align*}

\begin{proof} We begin with the monotonicity statement for $\Phi$.  
We will include a generic cutoff function for purposes of a later Lemma. We fix
$R > 0$ and consider a
change of coordinates as in Lemma
\ref{lem:entropyscaling}.
As described there, it follows that
\begin{align*}
\Phi(R;\N_t) &=\ \frac{R^4}{2} \int_{S_1} \brs{F_{\N_{R^2 s}}(Ry)}^2
\phi^2(Ry)G(y, s) \,
dy.
\end{align*}
A crucial point here is that we are \emph{not} rescaling the connection as well.
One now differentiates and rescales back to obtain 
\begin{align*}
 \frac{\del}{\del R} \brk{\Phi(R;\N_t)} &=\ \frac{4}{R} \Phi(R;\N_t) + \brk{
R^3 \int_{S_R} \ip{F_{\N_t}, x \hook \del
F_{\N_t}}  \phi^2 G \, dx}_{I_1} \\
\hsp & + \brk{ 2 R^3 \int_{S_R} \ip{F_{\N_t}, t \prs{\tfrac{\del F_{\N_t}}{\del
t}}} \phi^2 G \, dx}_{I_2} + \brk{R^3 \int_{S_R} \brs{F_{\N}}^2 \phi x \hook \N
\phi\, dx}.
\end{align*}
To address $I_1$, we recall some coordinate formulas.
\begin{align*}
\N_i F_{jk \ga}^{\gb}
&= \del_i F_{jk \ga}^{\gb} + \gG_{i \mu}^{\gb}F_{jk \ga}^{\gb} - F_{jk
\mu}^{\gb}\gG_{i \ga}^{\mu},\\
\N_i F_{jk \ga}^{\gb} &=- \prs{ \N_k F_{ij \ga}^{\gb} + \N_j F_{ki \ga}^{\gb}}.
\end{align*}
Combining these we conclude that
\begin{align*}
\del_i F_{jk \ga}^{\gb} = - \prs{ \N_k F_{ij \ga}^{\gb} + \N_j F_{ki \ga}^{\gb}}
- \gG_{i \mu}^{\gb}F_{jk \ga}^{\mu} + F_{jk \mu}^{\gb}\gG_{i \ga}^{\mu}.
\end{align*}
With this in mind we manipulate $I_1$,
\begin{align*}
I_1
&= R^3 \int_{S_R } x^i\prs{ \N_k F_{ij \ga}^{\gb} + \N_j F_{ki
\ga}^{\gb}}F_{jk \gb}^{\ga} \phi^2 G \, dx \\
& \hsp + R^3 \int_{S_R } x^i \gG_{i \mu}^{\gb}F_{jk \ga}^{\mu} F_{jk
\gb}^{\ga} \phi^2 G \, dx  - R \int_{S_R } x^i F_{jk \mu}^{\gb}\gG_{i
\ga}^{\mu}F_{jk
\gb}^{\ga} \phi^2 G \, dx \\
&= 2 R^3 \int_{S_R } x^i\prs{ \N_k F_{ij \ga}^{\gb}}F_{jk \gb}^{\ga} \phi^2 G
\, 
dx\\
&= - 2 R^3 \int_{S_R }  F_{ij \ga}^{\gb} \N_k \brk{x^iF_{jk \gb}^{\ga} \phi^2 G
} \,
dx \\
&=  2 R^3 \int_{S_R }  \left[ F_{ij \ga}^{\gb} F_{ij \gb}^{\ga} + F_{ij
\ga}^{\gb}x^i \prs{\N_k F_{kj \gb}^{\ga}}  - \tfrac{1}{2 t} x^i  F_{ij
\ga}^{\gb} x^k F_{kj \gb}^{\ga} \right] \phi^2 G \, dx\\
& \hsp  - 4 R^3 \int_{S_R }  F_{ij \ga}^{\gb} x^iF_{jk \gb}^{\ga} \prs{\N_k\phi}
\phi G \, dx \\
&= \ -\tfrac{4}{R} \Phi(R;\N) + R^3 \int_{S_R } \left[ \tfrac{1}{t} \brs{x \hook
F}^2 - 2 \ip{x \hook F, D^* F} \right]  \phi^2 G \, dx \\
& \hsp  - 4 R^3 \int_{S_R }  F_{ij \ga}^{\gb} x^iF_{jk \gb}^{\ga} \prs{\N_k\phi}
 \phi G \, dx.
\end{align*}
Also we have
\begin{align*}
I_2
&= 2 R^3\int_{S_R} t \ip{F, \tfrac{\del F}{\del t}} \phi^2 G \, dx\\
& = - 2 R^3 \int_{S_R} t \ip{F, DD^* F}  \phi^2 G \, dx\\
& =  4 R^3 \int_{S_R} t F_{ij \ga}^{\gb} \N_i (D^* F)_{j \gb}^{\ga} \phi^2 G \,
dx\\
& =  R^3 \int_{S_R} \left[ 4 t \brs{D^* F}^2 - 2 \ip{x \hook F, D^* F}
\right] \phi G  \, dx - 8 R^3 \int_{S_R} t F_{ij \ga}^{\gb} (D^* F)_{j
\gb}^{\ga} \prs{\N_i\phi} \phi G \, dx.
\end{align*}
Combining these calculations gives
\begin{align}
\begin{split}\label{eq:delRmonotonicity}
  \frac{\del}{\del R} \brk{\Phi(R;\N_t)} &=\ \brs{t} R^3 \int_{S_R}
\brs{\frac{x}{t}
\hook F_{\N_t} - 2 D^*_{\N_t} F_{\N_t}}^2 \phi^2 G \,dx \\
& \hsp + 4 R^3 \int_{S_R} \prs{x^k F_{kj\gb}^{\ga} - 2 t \prs{D^* F}_{j
\gb}^{\ga} }F_{ij \ga}^{\gb} \prs{\N_i \phi} \phi G \, dx\\
& \hsp + R^3 \int_{S_R} \brs{F_{\N}}^2 \phi x \hook \N \phi\, dx.
\end{split}
\end{align}
In particular, when $\phi \equiv 1$, we have monotonicity, which yields the
first claim.

Next we prove the monotonicity of $\Psi$, only considering the case where $\phi
\equiv 1$.  We fix $R > 0$ and use the coordinate
change as in Lemma \ref{lem:entropyscaling} once more, and it follows that
\begin{align*}
\Psi(R;\N_t) &=\ R^4 \int_{T_1} \brs{F_{\N_{R^2 s}}(Ry)}^2 \phi^2(y) G(y, s) \,
dy
\, ds.
\end{align*}
Once again, crucially, we are \emph{not} rescaling the connection.  One now
obtains
\begin{align*}
\frac{\del }{\del R}\brk{ \Psi(R ;\N_t) } &= \frac{4}{R} \Psi(R;\N_t) + \brk{ 2
R \int_{T_{R}} \ip{F_{\N_t}, x \hook \del F_{\N_t}} \phi^2 G \, dx \,
dt}_{I_1}\\
& \hsp + \brk{4 R \int_{T_R} \ip{F_{\N_t}, t \prs{\tfrac{ \del F_{\N_t} }{\del
t}}} \phi^2 G \, dx \, dt}_{I_2}.
\end{align*}
Nearly identical estimates for $I_1$ and $I_2$ as in the case of $\Phi$ above
yield
\begin{align*}
\frac{\del}{\del R} \brk{\Psi(R;\N_t)} &=\ 2 R \int_{T_R} \brs{t}
\brs{\frac{x}{t} \hook F_{\N_t} -
2 D^*_{\N_t} F_{\N_t}}^2 \phi^2 G \, dx \, dt.
\end{align*}
The result follows.
\end{proof}
\end{prop}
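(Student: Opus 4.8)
The plan is to reduce everything to the pointwise differential inequality $\tfrac{\del}{\del R}\Phi_{z_0}(R;\N_t)\geq 0$ (and its analogue for $\Psi$) in the case $\phi\equiv 1$, since the claimed monotonicity $\Phi_{z_0}(\rho;\N_t)\leq\Phi_{z_0}(r;\N_t)$ for $\rho\leq r$ then follows immediately by integrating in $R$. By translating we may take $z_0=(x_0,t_0)=0$, so that $G=G_0$ and on the slice $S_R(0)=\bRn\times\{-R^2\}$ one has $t=-R^2<0$. I would in fact carry a general cutoff $\phi\in\mathcal B_{x_0}$ through the computation — it costs nothing, and the resulting $\N\phi$ error terms, which vanish when $\phi\equiv 1$, are needed for the localized statements later — but only the $\phi\equiv 1$ case is used for the present proposition.

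The first step is to normalize the heat kernel by the parabolic rescaling $x=Ry$, $t=R^2s$, exactly as in the proof of Lemma \ref{lem:entropyscaling}, but — crucially — \emph{without} rescaling the connection: this writes $\Phi_{z_0}(R;\N_t)=\tfrac{R^4}{2}\int_{S_1}\brs{F_{\N_{R^2s}}(Ry)}^2\phi^2(Ry)G(y,s)\,dy$. Differentiating in $R$ and rescaling back to the original coordinates produces three contributions: the explicit power $R^4$ gives $\tfrac{4}{R}\Phi_{z_0}(R;\N_t)$; the dependence of $F$ on its spatial argument $Ry$ gives $I_1=R^3\int_{S_R}\ip{F_{\N_t},x\hook\del F_{\N_t}}\phi^2 G\,dx$; the dependence on the time argument $R^2s$ gives $I_2=2R^3\int_{S_R}t\,\ip{F_{\N_t},\tfrac{\del F_{\N_t}}{\del t}}\phi^2 G\,dx$; plus a cutoff term $R^3\int_{S_R}\brs{F_{\N_t}}^2\phi\,x\hook\N\phi\,dx$.

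The heart of the matter is $I_1$. Here I would eliminate $\del_i F_{jk}$ using the second Bianchi identity $\N_i F_{jk}+\N_k F_{ij}+\N_j F_{ki}=0$ together with the relation between $\del$ and $\N$ on $\Ad E$-valued forms; the connection terms $\gG\ast F\ast F$ that arise cancel among themselves by antisymmetry of $F$ in its form indices, leaving $I_1=2R^3\int_{S_R}x^i(\N_k F_{ij})F_{jk}\phi^2 G\,dx$. Integrating by parts in $x^k$, the derivative hitting the Gaussian uses $\N_k G=\tfrac{x^k}{2t}G$ and produces terms in $\tfrac{1}{t}\brs{x\hook F}^2$ and $\ip{x\hook F,D^*F}$, while the derivative hitting $x^iF_{jk}$ recombines (using $\N_k F_{kj}=-(D^*F)_j$) into exactly $-\tfrac{4}{R}\Phi_{z_0}(R;\N_t)$, cancelling the explicit scaling term. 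For $I_2$ I would use the flow equation in the form $\tfrac{\del F_{\N_t}}{\del t}=D\prs{\tfrac{\del\N_t}{\del t}}=-DD^*F_{\N_t}$ and integrate by parts once more, producing terms in $t\brs{D^*F}^2$ and $\ip{x\hook F,D^*F}$. Summing $\tfrac{4}{R}\Phi+I_1+I_2$, the $\Phi$ contributions cancel and, after a careful collection of signs, the surviving integrand reassembles into the perfect square $\brs{t}\,\brs{\tfrac{x}{t}\hook F_{\N_t}-2D^*_{\N_t}F_{\N_t}}^2$, so that $\tfrac{\del}{\del R}\Phi_{z_0}(R;\N_t)=\brs{t}R^3\int_{S_R}\brs{\tfrac{x}{t}\hook F_{\N_t}-2D^*_{\N_t}F_{\N_t}}^2\phi^2 G\,dx$ plus explicit $\N\phi$ error terms; taking $\phi\equiv 1$ gives $\tfrac{\del}{\del R}\Phi\geq 0$. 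For $\Psi$ the identical scheme applies: the same rescaling (again not touching the connection) turns $\Psi_{z_0}(R;\N_t)$ into a $t$-integral over $T_1$ of the same weighted density, the spatial and temporal derivative terms are formally the same $I_1,I_2$ now integrated additionally in $t$, and the integration by parts goes through verbatim to yield $\tfrac{\del}{\del R}\Psi_{z_0}(R;\N_t)=2R\int_{T_R}\brs{t}\brs{\tfrac{x}{t}\hook F_{\N_t}-2D^*_{\N_t}F_{\N_t}}^2\phi^2 G\,dx\,dt\geq 0$ when $\phi\equiv 1$.

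I expect the main obstacle to be purely computational: the bookkeeping inside $I_1$ — keeping track of the Bianchi substitution so that the connection terms visibly cancel, differentiating the Gaussian weight correctly, and checking that the leftover boundary-type terms reassemble exactly into $-\tfrac{4}{R}\Phi$ so that, together with the $I_2$ contribution, what remains is a manifest perfect square. Everything else (the scaling normalization, the flow-equation substitution in $I_2$, the $\Psi$ case, and collecting the $\N\phi$ errors) is routine once $I_1$ is organized correctly.
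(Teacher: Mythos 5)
Your proposal is correct and follows essentially the same route as the paper's own proof: the same parabolic change of variables without rescaling the connection, the same decomposition into $\tfrac{4}{R}\Phi + I_1 + I_2$ plus a cutoff term, the same Bianchi-identity treatment of $I_1$ and flow-equation substitution in $I_2$, and the same reassembly into the perfect square $\brs{t}\brs{\tfrac{x}{t}\hook F - 2D^*F}^2$. The only cosmetic difference is that the cancellation of the $\gG\ast F\ast F$ terms is really a relabeling of the bundle (Lie-algebra) indices rather than antisymmetry in the form indices, but this does not affect the argument.
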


Next we state the general monotonicity formula for $\Phi$ and $\Psi$ on
arbitrary Riemannian manifolds.  The proof is similar to that of Proposition
\ref{prop:energymonform}, incorporating further estimates due to the presence of
the cutoff function.  We state here the result of (\cite{HongTian} Theorem 2),
which applies to Yang-Mills-Higgs flow, and we just restrict the result to
Yang-Mills flow.  We point out that a similar result was claimed
in \cite{CS}, but uses definitions of $\Phi$ and $\Psi$ with incorrect scaling. 
Note that the notation for $\Phi$ and $\Psi$ agrees with various other
literature, but is reversed from that chosen in \cite{HongTian}.  Moreover, we
state an improved statement which is clearly implicit in \cite{HongTian}, simply
including an extra term in the inequality which is dropped in the statement in
\cite{HongTian}.

\begin{thm}[\cite{HongTian} Theorem 2, pp.448] \label{thm:8.2.2LWYM} Let $\N_t$
be a smooth
solution to Yang-Mills flow on $M \times [0,T)$. Then for $z_0 = (x_0,t_0) \in M
\times [0,T]$ and
$0 < R_1 \leq R_2 \leq \min \{\iota_M, \sqrt{t_0}/2
\}$, we have
\begin{align}
\begin{split}\label{Psimono}
\Psi_{z_0}(R_1;\N_t) & + \int_{R_1}^{R_2} r \int_{T_r(t_0)} \brs{t - t_0}
\brs{\frac{x - x_0}{2 \brs{t - t_0}} \hook F_{\N_t} - D^*_{\N_t} F_{\N_t}}^2
\phi^2 G_{z_0} \, dV \, dt  \, dr
\end{split}
\end{align}
\begin{align} \label{Phimono}
\begin{split}
& \leq e^{C(R_2-R_1)} \Psi_{z_0}(R_2;\N_t) + C (R_2 - R_1)\mathcal{YM}(\N_0),\\
\Phi_{z_0}(R_1;\N_t) & + \int_{R_1}^{R_2} r^3 \int_{S_r(t_0)} \brs{t - t_0}
\brs{\frac{x - x_0}{2 \brs{t - t_0}} \hook F_{\N_t} - D^*_{\N_t} F_{\N_t}}^2
\phi^2 G_{z_0} \, dV \, dr\\
& \leq e^{C(R_2-R_1)} \Phi_{z_0}(R_2;\N_t) + C \prs{R_2 - R_1}
\mathcal{YM}(\N_0).
\end{split}
\end{align}
\end{thm}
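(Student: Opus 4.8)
The plan is to reduce the statement to a differential inequality in the scale parameter $R$ and then integrate, following the proof of Proposition~\ref{prop:energymonform} but retaining every error term instead of specializing to $\phi\equiv1$ and flat $g$. On $(\bRn,g_{\Euc})$, identity~(\ref{eq:delRmonotonicity}) already records $\tfrac{\del}{\del R}\Phi_{z_0}(R;\N_t)$ exactly (with a general cutoff), its leading term being the nonnegative completed square $\brs{t-t_0}R^3\int_{S_R(t_0)}\brs{\tfrac{x-x_0}{t-t_0}\hook F_{\N_t}-2D^*_{\N_t}F_{\N_t}}^2\phi^2 G_{z_0}\,dV$, together with error terms each carrying a factor of $\N\phi$. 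First I would rerun this computation on a general manifold, now additionally tracking the geometric error terms produced by commuting covariant derivatives on a curved manifold and by using the Euclidean weight $G_{z_0}$ in place of the true heat kernel; in normal coordinates at $x_0$ these carry the expected extra power of the distance to $x_0$ with a coefficient controlled by the curvature of $g$.

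Next I would absorb the errors. Since $\phi\equiv1$ on $B_{\iota_M/2}(x_0)$, every cutoff error term is supported in the annulus $B_{\iota_M}(x_0)\setminus B_{\iota_M/2}(x_0)$, where $\brs{x-x_0}\ge\iota_M/2$; on $S_R(t_0)$ one has $\brs{t-t_0}=R^2$, so $R^3 G_{z_0}\le C R^{3-n}e^{-\iota_M^2/(16R^2)}$, which is bounded uniformly for $R\in(0,\iota_M]$. Hence each cutoff error is at most $C\int_M\brs{F_{\N_t}}^2\,dV\le 2C\,\mathcal{YM}(\N_t)\le 2C\,\mathcal{YM}(\N_0)$, the last inequality by the energy monotonicity of Lemma~\ref{lem:8.2.1LWYM} with $\phi\equiv1$. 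The geometric errors are instead bounded by $C\,\Phi_{z_0}(R;\N_t)$, with $C=C(M,g)$ depending on $\iota_M$ and a curvature bound for $g$. Combining these gives a differential inequality of the shape
\begin{align*}
\frac{\del}{\del R}\Phi_{z_0}(R;\N_t) &\geq R^3\int_{S_R(t_0)}\brs{t-t_0}\brs{\frac{x-x_0}{2\brs{t-t_0}}\hook F_{\N_t}-D^*_{\N_t}F_{\N_t}}^2\phi^2 G_{z_0}\,dV\\
&\quad -C\,\Phi_{z_0}(R;\N_t)-C\,\mathcal{YM}(\N_0),
\end{align*}
the displayed leading term being a valid lower bound since in~(\ref{eq:delRmonotonicity}) the genuine completed square equals $4$ times the one written here.

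To finish I would multiply by $e^{CR}$, integrate from $R_1$ to $R_2$, and absorb the bounded factors $e^{CR_i}\le e^{C\iota_M}$ into the constant; this yields exactly~(\ref{Phimono}), the one new point relative to \cite{HongTian} being that the nonnegative completed-square term is kept on the left rather than discarded. The bound~(\ref{Psimono}) for $\Psi_{z_0}$ follows in the same way with one additional time integration, just as the $\Psi$ case was handled in Proposition~\ref{prop:energymonform}. I expect the only genuinely delicate step to be the verification that the curved-manifold error terms are dominated by $\Phi_{z_0}(R;\N_t)$ itself, and not by some larger quantity, so that the Gronwall step closes; this bookkeeping is precisely what is carried out in \cite{HongTian}, and the present version adds nothing to it beyond retaining a term of favorable sign.
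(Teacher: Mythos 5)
Your strategy is exactly the one the paper itself points to: the paper gives no independent proof of Theorem \ref{thm:8.2.2LWYM}, deferring to \cite{HongTian} and remarking only that the argument is that of Proposition \ref{prop:energymonform} with the cutoff and curved-metric errors retained, and that the stated improvement consists merely of keeping the nonnegative completed-square term on the left. Your reduction to a differential inequality in $R$, the Gronwall integration producing $e^{C(R_2-R_1)}$ and $C(R_2-R_1)\YM(\N_0)$, and the treatment of the $\Psi$ case by one further time integration all match this.

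There is, however, one step that fails as literally written. You claim that every cutoff error term is pointwise supported in the annulus and hence bounded by $C\int_M \brs{F_{\N_t}}^2\,dV$. This is true for the term $R^3\int_{S_R}\brs{F}^2\,\phi\, x\hook\N\phi\, G$, but not for the cross term $4R^3\int_{S_R}\ip{x\hook F - 2t\,D^*F,\ \N\phi\hook F}\,\phi\, G$ appearing in \eqref{eq:delRmonotonicity}: the factor $D^*F$ involves a derivative of curvature and is not controlled by the Yang--Mills energy. The correct handling (used explicitly by the paper in the proof of Lemma \ref{lem:8.3.2LWYM}) is a weighted Cauchy--Schwarz: writing $x\hook F-2tD^*F = t\prs{\tfrac{x}{t}\hook F-2D^*F}$, one bounds the cross term by $\ge\, \brs{t}R^3\int\brs{\tfrac{x}{t}\hook F-2D^*F}^2\phi^2G + C_{\ge}\,\brs{t}R^3\int\brs{\N\phi}^2\brs{F}^2G$, absorbs the first piece into the completed square, and only then bounds the second piece by $C\,\YM(\N_0)$ via the annulus estimate on $G$. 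This is precisely why only a fraction of the completed square can be kept on the left-hand side; you noticed the factor-of-$4$ slack but used it only to justify the coefficient of the retained square, not as the mechanism for absorbing the $D^*F$ cross term. With that repair the argument closes as you describe.
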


As the statement above makes clear, the functionals $\Phi$ and $\Psi$ are fixed
if
the connection satisfies a certain modified Yang-Mills type equation:

\begin{defn} Let $\N_t$
be a nontrivial smooth
one-parameter family of connections on
$\mathbb R^n \times (- \infty,0]$. Then $\N_t$ is a \emph{soliton} if
\begin{equation*}
D_{\N_t}^*F_{\N_t} = \frac{x }{2t} \hook F_{\N_t}.
\end{equation*}
\end{defn}

We end with a useful technical observation showing that the different entropies
$\Phi$ and $\Psi$ are uniformly equivalent, which exploits the monotonicity

\begin{lemma} \label{lem:cute}  Let $\N_t$ be a solution to Yang-Mills
flow on $M \times [0,T)$. There
exists a uniform constant $C$ such that for $z_0 = (x_0,t_0) \in M \times [0,T)$
and for $R$ with $0 < R \leq \min \{\iota_M, \sqrt{t_0}/2 \}$, we have
\begin{align*}
C^{-1} \Psi_{z_0}(R; \N_t) \leq \Phi_{z_0}(2 R;\N_t) \leq C \Psi_{z_0}(2
R;\N_t).
\end{align*}
\begin{proof} We give the proof on $\mathbb R^n$, in which case the monotonicity
does not involve the error term involving the Yang-Mills energy, with the
generalization to manifolds a straightforward extension.  Without loss of
generality we can consider the time interval to be $[-1,0]$ and choose $z_0 =
(0,0)$.    Then we have, using the monotonicity of $\Phi$ and a change of
variables,
\begin{align*}
\Phi(2R) 
&\geq\ \tfrac{1}{R} \int_{R}^{2R} \Phi(s) \, ds\\
&=\ \tfrac{R^3}{2} \int_{s=R}^{s=2R} \int_{M \times \{-s^2\}} \brs{F_s}^2 \phi^2
G \, dV \, ds\\
&=\ \tfrac{R^3}{2} \int_{t=-R^2}^{t=-4R^2} \tfrac{1}{2\sqrt{-t}}\int_{M \times
\{t\}} \brs{F_t}^2 \phi^2 G \, dV \, dt\\
&\geq\ c R^2 \int_{T_{R}(0)} \brs{F_t}^2 \phi^2G \, dV \, dt\\
&=\ c \Psi (R).
\end{align*}
Analogously we have
\begin{align*}
\Phi(R)
&\leq\ \tfrac{1}{R} \int_{R}^{2R} \Phi(s) \, ds\\
&=\ \tfrac{R^3}{4} \int_{s=R}^{s=2R} \int_{M \times \{-s^2\}} \brs{F_s}^2 \phi^2
G \, dV \, ds\\
&=\ \tfrac{R^3}{4} \int_{t=-R^2}^{t=-4R^2} \tfrac{1}{2\sqrt{-t}}\int_{M \times
\{t\}} \brs{F_t}^2 \phi^2 G \, dV \, dt\\
&\leq\ C R^2 \int_{T_{R}(0)} \brs{F_t}^2 \phi^2 G \, dV \, dt\\
&=\ C \Psi (R).
\end{align*}
The result follows.
\end{proof}
\end{lemma}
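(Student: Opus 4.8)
The plan is to exploit the monotonicity of $\Phi$ in its scale variable --- from Proposition \ref{prop:energymonform} on flat space, or Theorem \ref{thm:8.2.2LWYM} on a general manifold --- to sandwich $\Phi$ between dyadic averages of itself, and then to recognize such an average as comparable to $\Psi$ by the change of variables relating the time slices $S_s(t_0) = M \times \{t_0 - s^2\}$ that enter $\Phi$ to the time slab $T_R(t_0)$ that enters $\Psi$. First I would normalize: after translating in space-time and applying the scaling Lemma \ref{lem:entropyscaling} it suffices to work on $\prs{\bRn, g_{\Euc}}$ with $z_0 = (0,0)$ and the flow on $[-1,0]$, so that $0 < R \leq \tfrac{1}{2}$; there Proposition \ref{prop:energymonform} furnishes honest monotonicity of $\Phi$ and of $\Psi$ with no energy error term. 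The extension to a closed manifold only replaces these by the inequalities of Theorem \ref{thm:8.2.2LWYM}, whose extra factor $e^{C(R_2-R_1)}$ and additive term $C(R_2-R_1)\mathcal{YM}(\N_0)$ are lower order in the bounded-$R$ regime and do not affect the argument.

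With the basepoint suppressed, the monotonicity of $s \mapsto \Phi(s)$ on $[R,2R]$ gives
\begin{align*}
\Phi(R) \;\leq\; \frac{1}{R}\int_R^{2R}\Phi(s)\,ds \;\leq\; \Phi(2R),
\end{align*}
so it remains to compare the average in the middle with $\Psi(R)$. Unfolding $\Phi(s) = \tfrac{s^4}{2}\int_{M \times \{-s^2\}} \brs{F_s}^2 \phi^2 G \, dV$ and substituting $t = -s^2$, $ds = -\tfrac{dt}{2\sqrt{-t}}$, the slices $\{s\}$ for $s \in [R,2R]$ sweep the time interval $[-4R^2,-R^2]$, which is exactly the slab defining $T_R(0)$; the average then becomes a weighted integral of $\brs{F_t}^2\phi^2 G$ over $T_R(0)$ with weight $\tfrac{1}{R}\cdot\tfrac{s^4}{2}\cdot\tfrac{1}{2\sqrt{-t}} = \tfrac{(-t)^{3/2}}{4R}$. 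Since $s \in [R,2R]$ forces $(-t)^{3/2} \asymp R^3$, this weight is comparable to $R^2$, precisely the normalization in $\Psi(R) = \tfrac{R^2}{2}\int_{T_R(0)} \brs{F_t}^2 \phi^2 G \, dV \, dt$, so with uniform constants
\begin{align*}
c\,\Psi(R) \;\leq\; \frac{1}{R}\int_R^{2R}\Phi(s)\,ds \;\leq\; C\,\Psi(R).
\end{align*}

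Combining the two displays yields $c\,\Psi(R) \leq \Phi(2R)$, i.e.\ the left-hand inequality of the Lemma, and also $\Phi(R) \leq C\,\Psi(R)$; reading the latter at scale $2R$ in place of $R$ gives $\Phi(2R) \leq C\,\Psi(2R)$, the right-hand inequality. I do not expect a genuine obstacle: the substance is the substitution $t = -s^2$ together with the elementary comparison $\sqrt{-t} \asymp R$ over the dyadic window, on top of the already-proved monotonicity. The one point worth a line of care is the manifold case, where one checks that the error terms in Theorem \ref{thm:8.2.2LWYM} stay negligible --- which they do, as $R$ is bounded by $\min\{\iota_M,\sqrt{t_0}/2\}$ and the constant may be taken to depend on the geometry and on $\mathcal{YM}(\N_0)$.
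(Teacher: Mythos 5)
Your argument is correct and is essentially the paper's own proof: the same dyadic average $\tfrac{1}{R}\int_R^{2R}\Phi(s)\,ds$ sandwiched by monotonicity, the same substitution $t=-s^2$ identifying the average with a weighted integral over $T_R(0)$ comparable to $\Psi(R)$, and the same rereading of $\Phi(R)\leq C\Psi(R)$ at scale $2R$ for the upper bound. No further comment is needed.
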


\subsection{Epsilon-regularity}

A central phenomenon in understanding the singularity formation of geometric
flows is that of $\ge$-regularity.  A result of this kind for Yang-Mills flow is
shown in \cite{HongTian}, relying centrally on the monotonicity formula for
$\Psi$ and the evolution equation for the curvature.  Once again we only state
the result for solutions to Yang-Mills flow though the result is shown for
Yang-Mills-Higgs flow in \cite{HongTian}.  We also point out that a similar
result is claimed in \cite{CS}, although it relies on the incorrectly defined
$\Psi$ functional.

\begin{thm}[\cite{HongTian} Theorem 4, pp.454]\label{thm:eregularity}Suppose
$\N_t$ is a solution to
Yang-Mills flow on $M \times [0,T)$.  There exist
constants $C,\gd,\ge_0 > 0$ depending on $(M, g)$ and $\YM(\N_0)$ so that given
$z_0 = (x_0,t_0) \in M \times [0,T)$ and $0 < R < \min \{ \iota_M, \sqrt{t_0}/2
\}$ such that
\begin{equation*}
\Psi_{z_0}(R;\N_t) < \ge_0,
\end{equation*}
one has
\begin{align*}
\sup_{P_{\gd R}(z_0)} \brs{F_{\N_t}}^2 \leq \frac{C}{ (\gd R)^{4}}.
\end{align*}
\end{thm}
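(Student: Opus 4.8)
\emph{Overall strategy.} The plan is a point–selection (blow–up) contradiction: draw scale–invariant smallness from the monotonicity of $\Psi$ (Theorem \ref{thm:8.2.2LWYM}, together with Lemma \ref{lem:cute}), and use the parabolic evolution equation for $\brs{F_{\N_t}}^2$ to convert a large pointwise curvature into a definite amount of concentrated energy. Throughout, $P_r(z_1)$ is the backward parabolic cylinder $B_r(x_1)\times[t_1-r^2,t_1]$. I would first normalize: using Definition \ref{defn:rescaledconn} and the scaling law of Lemma \ref{lem:entropyscaling}, replace $\N_t$ by $\N^{R,z_0}_t$ to reduce to $R=1$; the ambient metric then has curvature $\leq R^2\sup_M\brs{\mathrm{Rm}_g}$ and injectivity radius $>1$, so all parabolic constants below are uniform and $t_0>4$. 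By the monotonicity of $\Psi$ in the scale, its equivalence with $\Phi$ (Lemma \ref{lem:cute}), the Gaussian lower bound $G_z\gtrsim r^{-n}$ on $P_r(z)$, and a comparison of $\Psi_{z_1}$ with $\Psi_{z_0}$ for basepoints $z_1$ near $z_0$, the single hypothesis $\Psi_{z_0}(1;\N_t)<\ge_0$ upgrades to a uniform parabolic Morrey estimate
\begin{equation*}
r^{2-n}\int_{P_r(z_1)}\brs{F_{\N_t}}^2\,dV\,dt\ \leq\ C\ge_0 + C\,\mathcal{E}_0
\end{equation*}
for all $z_1$ in a fixed parabolic neighborhood of $z_0$ and all $0<r\leq\gd_0$, where $\mathcal{E}_0$ is a geometric error term (from the manifold corrections in Theorem \ref{thm:8.2.2LWYM}) which is made small by taking the selection radius $\gd_0$ small.

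\emph{Point selection.} Set $f:=\brs{F_{\N_t}}^2$ and, for $\gs\in[0,\gd_0]$, $h(\gs):=(\gd_0-\gs)^4\sup_{\bar P_\gs(z_0)}f$, a continuous function vanishing at $\gs=\gd_0$ (here $f$ is smooth on a neighborhood of $\bar P_{\gd_0}(z_0)$). If $h\equiv0$ the theorem is trivial; otherwise $h$ attains a positive maximum at some $\gs_0\in[0,\gd_0)$. Let $e_0:=\sup_{\bar P_{\gs_0}(z_0)}f=f(z_1)$ with $z_1=(x_1,t_1)\in\bar P_{\gs_0}(z_0)$, and $\rho_0:=\tfrac12(\gd_0-\gs_0)$. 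Comparing $h(\gs_0+\rho_0)\leq h(\gs_0)$ gives $\sup_{P_{\rho_0}(z_1)}f\leq16\,e_0$, and comparing $h(\gd_0/2)\leq h(\gs_0)$ shows that it suffices to prove the scale–invariant bound $\rho_0^4e_0\leq C_1$: this forces $\sup_{P_{\gd_0/2}(z_0)}f\leq C_2$, and undoing the rescaling yields the theorem with $\gd=\gd_0/2$.

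\emph{The contradiction.} Suppose instead $\rho_0^4e_0\geq1$. Rescale the flow at $z_1$ by $\la:=e_0^{-1/4}$, i.e.\ set $\widehat{\N}_t:=\N^{\la,z_1}_t$; then $r_*:=\rho_0/\la\geq1$, and by \eqref{eq:curvrescale} one has $\brs{F_{\widehat{\N}_0}(0)}^2=1$ while $\brs{F_{\widehat{\N}_t}}^2\leq16$ on $P_1(0)$, and the rescaled metric has curvature $\la^2$ times that of $g$. The Yang--Mills flow curvature evolution equation is
\begin{equation*}
\prs{\dt-\lap}\brs{F_{\widehat{\N}_t}}^2\ =\ -2\brs{\N F_{\widehat{\N}_t}}^2 + \ip{F_{\widehat{\N}_t}, \mathrm{Rm}\ast F_{\widehat{\N}_t}} + \ip{F_{\widehat{\N}_t}, F_{\widehat{\N}_t}\ast F_{\widehat{\N}_t}},
\end{equation*}
so on $P_1(0)$, using $\brs{F_{\widehat{\N}_t}}\leq4$ and the curvature bound, $\brs{F_{\widehat{\N}_t}}^2$ is a nonnegative subsolution of $\prs{\dt-\lap}u\leq a u$ for a constant $a=a(n,(M,g))$. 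The parabolic local maximum (mean value) inequality then gives
\begin{equation*}
1=\brs{F_{\widehat{\N}_0}(0)}^2\ \leq\ \sup_{P_{1/2}(0)}\brs{F_{\widehat{\N}_t}}^2\ \leq\ C\int_{P_1(0)}\brs{F_{\widehat{\N}_t}}^2\,dV\,dt,
\end{equation*}
and undoing the rescaling at $z_1$ (recall $\la\leq\rho_0\leq\gd_0$) this reads $\la^{2-n}\int_{P_\la(z_1)}\brs{F_{\N_t}}^2\,dV\,dt\geq C^{-1}$. Since $\la\leq\gd_0$ and $z_1$ lies in the fixed parabolic neighborhood of $z_0$, this contradicts the Morrey upper bound above once $\ge_0$ and $\gd_0$ are chosen small. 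Hence $\rho_0^4e_0<1$, which completes the proof.

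\emph{Main obstacle.} The delicate step is the first: converting the single smallness $\Psi_{z_0}(R)<\ge_0$ into a uniform scale–invariant smallness for $F$ in $L^2$ over all small backward cylinders based near $z_0$. This requires comparing the localized Gaussian entropies at shifted basepoints and absorbing the geometric error terms of the monotonicity formula on a curved manifold, which forces $\gd_0$ to be chosen small relative to a Gaussian–comparison constant. The point–selection bookkeeping and the parabolic mean value inequality — the latter applied only after the curvature has been normalized to unit size — are routine. This is essentially the scheme of \cite{HongTian}; an alternative, closer to the argument there, is a direct parabolic Moser iteration on the curvature evolution equation started from the same $L^2$–Morrey smallness.
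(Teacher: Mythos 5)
The paper does not prove this theorem itself; it is stated as a direct citation to Hong--Tian (Theorem~4), whose argument follows the Struwe blueprint: derive a scale--invariant $L^2$ Morrey bound on $\brs{F}$ near $z_0$ from the $\Psi$--monotonicity, then run a parabolic Moser iteration on the Bochner inequality $(\partial_t - \lap)\brs{F}^2 \leq C\brs{F}^3 + C\brs{F}^2$ to pass from $L^2$ to $L^\infty$. Your proposal reaches the same conclusion from the same Morrey smallness but replaces Moser iteration by a Schoen--type point--selection argument followed by a single application of the parabolic mean value inequality to the \emph{renormalized} curvature. Both routes are legitimate; the point--selection version is arguably cleaner because after rescaling at $z_1$ the cubic term in the Bochner inequality is linearized ($\brs{F} \leq 4$ on $P_1$), so one never has to iterate. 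Your bookkeeping in the selection step ($h(\gs) = (\gd_0 - \gs)^4 \sup_{\bar P_\gs} f$, comparison of $h(\gs_0 + \rho_0)$ with $h(\gs_0)$, and of $h(\gd_0/2)$ with $h(\gs_0)$) and the scale accounting under $\N^{\la,z_1}_t$ (yielding $\la^{2-n}\int_{P_\la(z_1)}\brs{F}^2 \geq C^{-1}$) are correct.

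Two cautions, both concerning the first step you flag as the ``main obstacle.'' First, a small imprecision: the Gaussian lower bound $G_z \gtrsim r^{-n}$ is valid on $B_r(x_0) \times [t_0 - 4r^2, t_0 - r^2]$ (the slab appearing in $T_r$), not on $P_r(z)$ itself, where $G_z$ degenerates as $t \to t_0$ off-axis; one therefore compares $\Psi_{z_1}(r)$ to the Morrey integral over a \emph{time-shifted} cylinder, or equivalently uses $\Psi_{z_1 + (0,r^2)}(r)$ as in the proof of Lemma~\ref{weakcompactlemma5}. Second, the geometric error term $C(R_2 - R_1)\YM(\N_0)$ in (\ref{Psimono}) is not small merely because $\gd_0$ is small: if you compare $\Psi_{z_1}(r)$ for $r \leq \gd_0$ against $\Psi_{z_1}$ at a reference scale $r_0$ comparable to $R$, the error is of size $r_0 \YM(\N_0)$, not $\gd_0 \YM(\N_0)$. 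The smallness needed for your contradiction must instead come from the combination of (i) taking $\ge_0$ small, and (ii) the exponential Gaussian decay constant (of the type $e^{-1/(4\gd)^2}\gd^{-n}$ that appears in Lemma~\ref{weakcompactlemma5}) when passing from $\Psi_{z_0}(R)$ to $\Psi_{z_1}(r_0)$ with $z_1 \in P_{\gd R}(z_0)$. Spelling this comparison out is the real content of the ``uniform Morrey estimate'' you assert, and it is where the dependence of $\ge_0, \gd$ on $\YM(\N_0)$ and the geometry of $(M,g)$ actually enters.
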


\section{Weak Compactness and limit measures}\label{s:weakcompt}

In this section we establish a weak compactness result for solutions to
Yang-Mills flow satisfying certain weak convergence hypotheses.  In the first
subsection below we establish this theorem, and in the following subsection we
refine the analysis to show a number of properties of the limiting energy
densities and defect measures.

\subsection{Weak compactness theorem}

\begin{thm}\label{fullweakcompactthm} Suppose $\{\N^i_t\}$ is a sequence of
smooth solutions to Yang-Mills flow over $M \times [-1,0]$ with $\YM(\N^i_t)
\leq
\YM (\N^i_{-1}) <
C$. 
Moreover, suppose $\{\N_{-1}^i\} \to \N$ weakly in
$H^{1,2}_{loc}(\mathcal{A}_E(M))$,
and
\begin{itemize}
\item $\N^i_t \to \N_t$ in $L^2_{\mbox{loc}}(M \times [-1,0])$,
 \item $\frac{\del \N^i_t}{\del t} \to \frac{\del \N_t}{\del t}$ weakly in
$L^2_{\mbox{loc}}(M \times [-1,0])$,
 \item $F_{\N^i_t} \to F_{\N_t}$ weakly in $L^2_{\mbox{loc}}(M \times [-1,0])$.
\end{itemize}
Then $\N_t$ is gauge equivalent to a weak solution to Yang-Mills flow, and there
exists a closed set $\Sigma$ of locally finite $\prs{n-2}$-dimensional parabolic
Hausdorff measure such that $\N_t$ is a smooth solution on $(M \times (-1,0))
\backslash \Sigma$.

\begin{proof}
Set
\begin{equation*}
\Phi^i_{z_0}(r) := 
\begin{cases}
\Phi_{z_0}\prs{r;\N_t^i} & r \in \prs{0, \sqrt{1+t_0}}\\
\Phi_{z_0}\prs{\sqrt{1+t_0};\N_t^i}&  \text{ otherwise.}
\end{cases}
\end{equation*}
Now define the concentration set
\begin{equation*}
\Sigma := \bigcap_{r>0}\sqg{z \in M \times \brk{-1,0} \mid \liminf_{k\to\infty}
\Phi_z^k\prs{r} \geq \ge_0},
\end{equation*}
where $\ge_0$ is the constant of Theorem \ref{thm:eregularity}. To address the
theorem, we divide the proof up into three pieces: Lemma \ref{lem:Sigmacl},
Lemma \ref{weakcompactlemma3}, and Lemma \ref{weakcompactlemma10}.
\begin{lemma}\label{lem:Sigmacl}  $\Sigma$ is closed.
\begin{proof}
Let $\bar{z}$ lie in the closure of $\Sigma$ and $\{z_k\}_{k \in \mathbb{N}} \in
\Sigma$ with $z_k \to \bar{z}$. By the definition of $\Sigma$,
\begin{equation*}
\liminf_{k \to \infty} \liminf_{i \to \infty} \Phi^i_{z_k}(r) =\liminf_{k \to
\infty} \liminf_{i \to \infty}\brk{\frac{r^4}{2} \int_{\bRn \times \{ t_k -r^2
\}}  \brs{ F^i_t}^2 \phi^2G_{z_k} \, dV}\geq \ge_0. \end{equation*}
Note that $G_{z_k} \to G_{\bar{z}}$ on any closed sets not containing $\bar{z}$.
 Moreover, for fixed $i$ the function $\brs{F^i_t}^2$ is in $L^1$.  Therefore we
can fix $r > 0$, apply the dominated convergence theorem and interchange
$\liminf$ ordering by an elementary argument to conclude
\begin{align*}
\liminf_{i \to \infty} \frac{r^4}{2}\int_{M \times \sqg{ \bar{t} - r^2 }}  \brs{
F^i_t}^2 \phi^2G_{\bar{z}} \, dV =&\ \liminf_{i \to \infty} \lim_{k \to \infty}
\frac{r^4}{2} \int_{M \times \{\bar{t} - r^2\}} \brs{F_t^i}^2 \phi^2 G_{z_k} \,
dV\\
=&\ \liminf_{i \to \infty} \liminf_{k \to \infty} \frac{r^4}{2} \int_{M \times
\{\bar{t} - r^2\}} \brs{F_t^i}^2 \phi^2 G_{z_k} \, dV\\
\geq&\ \ge_0.
\end{align*}
Therefore $\bar{z} \in \Sigma$, so we conclude $\Sigma$ is closed.
\end{proof}
\end{lemma}
\begin{lemma} \label{weakcompactlemma3} $\N_t$ is gauge equivalent to a smooth
solution to Yang-Mills flow on $\prs{M \times (-1,0]} \backslash \Sigma$.
\begin{proof} Given $z \in \prs{\mathbb R^n \times (-1,0]} \backslash \Sigma$,
by construction there exists $r_0 > 0$ such that
\begin{equation*}
\liminf_{k \to \infty} \Phi^k_z (r_0) \leq \ge_0.
\end{equation*}
Passing to a subsequence and applying Lemma \ref{lem:cute}, we obtain an $\ge_0$
upper bound for $\Psi$, and by Theorem
\ref{thm:eregularity}, we conclude that
\begin{align*}
\sup_{P_{\gd r_0}(z)} \brs{F^k_t}^2 \leq \frac{C}{(\gd r_0)^{4}},
\end{align*}
for some universal constant $\gd > 0$.
Applying (\cite{Weinkove}, Theorem 2.2) we conclude uniform estimates on all
derivatives of curvature on a parabolic ball of radius $\frac{\gd r_0}{2}$.

Using the Uhlenbeck gauge-fixing Theorem (\cite{Uhlenbeck} Theorem 1.3) and the
gauge-patching argument of (\cite{DonB} Corollary 4.4.8) we can obtain a Coloumb
gauge on $B_{\frac{\gd r_0}{4}}$.  Moreover, by applying elliptic regularity
estimates (\cite{DonB} Lemma 2.3.11) and the Sobolev inequality we obtain
uniform pointwise estimates for the connection in the Coloumb gauge on
$B_{\frac{\gd r_0}{8}}$.  By applying the Yang-Mills flow PDE directly to this
gauge-fixed connection and using the previous estimates on the derivatives of
curvature we obtain uniform pointwise estimates for the gauge fixed connections
on $P_{\frac{\gd r_0}{8}}$.  Thus for each point $z_0$ we have constructed a
radius $\frac{\gd r_0}{8}$ and a sequence of gauge transformations for which the
parabolic ball of that radius has uniform control along some subsequence of
gauge-fixed connections.

Fix a compact set $K$ such that $K \cap \Sigma = \varnothing$.  For each $z \in
K$ there exist arbitrarily large values of $k$ and parabolic balls centered at
$z$ of the type described above.  This collection of parabolic balls covers $K$,
and since $K$ is compact we can choose a finite subcover, and also pass to a
subsequence of connections all of which have the bounds described above.  A
further application of the gauge-patching result (\cite{DonB} Corollary 4.4.8)
allows us to conclude the existence of a single gauge transformation, which,
when applied to our sequence, yields a sequence of connections with uniform
$C^{l,\ga}$ bounds. By the Arzela-Ascoli Theorem we obtain a further subsequence
converging on $K$.
\end{proof}
\end{lemma}

\begin{lemma} \label{weakcompactlemma5}  $\Sigma$
has locally finite $(n-2)$-dimensional parabolic Hausdorff measure.
\begin{proof} Fix a compact set $K$, and some $r_0 > 0$.  By Vitali's covering
lemma there exists some $l \in \mathbb{N}$, $\{ z_k \}_{k =1}^l \subset K \cap
\Sigma$ and $\{ r_k \}_{k = 1}^l \subset \prs{0,r_0}$ so that the sets $\{
P_{r_k}\prs{z_k} \}_{k=1}^l$, are mutually disjoint and $K \cap \Sigma$ is
covered by $\{ P_{5r_k}\prs{z_k} \}_{k=1}^l$.  Let $\bar{z}_k := z_k +
\prs{0,r_k^2}$ and fix some $\delta > 0$ to be determined later.  

The proof requires two different estimates on $G$ on different domains.  First,
on $(M \times \brk{t_k - 4 \delta^2 r_k^2 , t_k - \delta^2 r_k}) \backslash
P_{r_k}\prs{z_k}$ one has
\begin{equation*}
G_{z_k} \leq \delta^{-n} e^{- 1 / \prs{4 \delta}^2} G_{\bar{z}_k}.
\end{equation*}
Also, for points in $B_{r_k}(x^k) \times [t_k - 4 \gd^2 r_k^2, t_k - \gd^2
r_k^2]$ one has
\begin{align*}
G_{z_k} \leq C_{\gd} r^{-n}.
\end{align*}
We will also employ the estimate of Lemma \ref{lem:cute}, in particular
\begin{align*}
\Phi_{z_0}(R;\N_t) \leq C \Psi_{z_0}(R;\N_t).
\end{align*}
Combining the observations above we obtain, for all $k$, $i$
\begin{align*}
\begin{split}
\ge_0
&\leq \Phi_{z_k}^i \prs{\gd r_k}\\
&\leq C \Psi_{z_k}\prs{\delta r_k;\N^i_t} \\
&= C \gd^2 r_k^2  \int_{t_k - 4 \delta^2 r_k^2}^{t_k - \delta^2 r_k^2} \int_{M
\backslash B_{r_k}(x_k)}\brs{F^{i}_t}^2 G_{z_k} dV dt + C \gd^2 r_k^2
\int_{t_k - 4 \delta^2 r_k^2}^{t_k - \delta^2 r_k^2}
\int_{B_{r_k}(x_k)}\brs{F^{i}_t}^2 G_{z_k} dV dt \\
&\leq C \tfrac{e^{-1/(4 \delta)^2}}{4 \delta^{n}} \gd^2 r_k^2 \int_{t_k - 4
\delta^2 r_k^2}^{t_k - \delta^2 r_k^2} \int_{M \backslash B_{
r_k}(x_k)}\brs{F^{i}_t}^2 G_{\bar{z}_k} dV dt + C_{\gd} r_k^{2-n} \int_{t_k
- 4 \delta^2 r_k^2}^{t_k - \delta^2 r_k^2} \int_{B_{r_k}(x_k)}\brs{F^{i}_t}^2
dV dt \\
&\leq \brk{ C \tfrac{e^{-1/(4 \delta)^2}}{4\delta^{n}} \gd^2 r_k^2 \int_{t_k - 4
\delta^2 r_k^2}^{t_k - \delta^2 r_k^2} \int_{M}\brs{F^{i}_t}^2 G_{\bar{z}_k}
dV dt}_{I_1} + \brk{C_\gd r_k^{2-n} \int_{P_{r_k}(z_k)} \brs{F^{i}_t}^2 dV
dt}_{I_2}.
\end{split}
\end{align*}
Observe that we can estimate $I_1$ using Theorem \ref{thm:8.2.2LWYM} via
\begin{align*}
\gd^2 r_k^2 \int_{t_k - 4 \gd^2 r_k^2}^{t_k - \gd^2 r_k^2} \int_M
\brs{F^{i}_t}^2 G_{\bar{z}_k} \,dV dt
& =\ \gd^2 r_k^2 \int_{t_k + r_k^2 -
4 r_k^2 (1 + \gd^2)}^{t_k + r_k^2 - r_k^2(1 + \gd^2)} \int_M \brs{F^{i}_t}^2
G_{\bar{z}_k} \, dV dt\\
&=\ \Psi_{\bar{z}_k} \prs{r_k \sqrt{1 + \gd^2}; \N^i_t}\\
&\leq\ C \Psi_{\bar{z}_k} \prs{r_0;\N_t^i} + C(\prs{\YM\prs{\N_{-1}}})\\
&\leq\ C\prs{\YM \prs{\N_{-1}}}.
\end{align*}
Hence, since $\lim_{\gd \to 0} \frac{e^{-1/(4\gd)^2}}{4\gd^n} = 0$, we can
choose $\delta > 0$ sufficiently small so that $I_1 \leq
\frac{\ge_0}{2}$, which then implies that $I_2 \geq \frac{\ge_0}{2}$, which by
elementary manipulations gives
\begin{equation*}
r_k^{n-2}\leq \frac{C}{\ge_0} \int_{P_{r_k}(z_k))} \brs{F^{i}_t}^2 \, dV \, dt.
\end{equation*}
Therefore we have
\begin{align*}
\mathcal{P}_{5 r_0}^{n-2} \prs{P_R \cap \Sigma}
& \leq \sum_{k=1}^l \prs{5 r_k}^{n-2} \\
& \leq C \sum_{k=1}^l \int_{P_{r_k}(z_k)} \brs{F^{i}_t}^2 dV dt
\\
& \leq C \YM\prs{\N_{-1}}.
\end{align*}
Sending $r_0 \to 0$ allows us to conclude that $\mathcal{P}^{n-2}(\Sigma \cap
K) < \infty$ for any compact set $K$.  The result follows.
\end{proof}
\end{lemma}

\begin{lemma} \label{weakcompactlemma10}  $\N_t$ is a
weak solution to Yang-Mills flow.
\begin{proof}  We verify
(\ref{weaksoln}) by approximating via cutoff functions which excise the singular
set $\Sigma$.  To construct these functions, first consider the coverings
constructed in Lemma \ref{weakcompactlemma5}.  In particular, given any
$r_0>0$ there is some finite cover $\{ P_{r_i}(z_i)\}_{i=1}^{l}$ of
$\Sigma$, for some $l \in \mathbb{N}$ with $r_i < r_0$ satisfying
\begin{equation} \label{weakcompactlemma105}
\sum_{i=1}^{l} r_i^{-4} \brs{P_{r_i}(z_i)} \approx \mathcal
P_{5r_0}^{n-2}(K
\cap \Sigma) \leq C \YM \prs{\N_{-1}},
\end{equation}
where here $\brs{\cdot}$ denotes the Lebesgue measure on $\bRn \times
\mathbb{R}$.

Let $\phi \in C_0^{\infty}(P_2, [0,\infty))$ be a standard bump function
satisfying
$0 \leq \phi \leq 1$ and $\phi \equiv 1$ on $P_1$. For all $i \in
\mathbb{N}$, define
\begin{equation*}
\phi_i (x,t) := \phi \prs{\tfrac{x-x_i}{r_i},\tfrac{t-t_i}{r_i^2}}.
\end{equation*}
Let $\ga \in C^{\infty}([0,T]; L^2(\Lambda^2(\Ad E)))$ and arbitrary and set
\begin{align*}
\eta
:=  \alpha \inf_i \prs{1-\phi_i} \in C_0^{\infty}\prs{ \prs{\bRn \times
(-1,0)} \backslash \Sigma}.
\end{align*}
Note that by definition, $\eta \to \alpha$
almost
everywhere as $r_0 \to 0$.  Furthermore, observing that $\supp \eta \subset
\prs{\bRn \times \prs{-1,0}} \backslash \Sigma$, it follows from Lemma
\ref{weakcompactlemma3} that, setting $\gU_t = \N_{\refc} - \N_t$, we have
\begin{align*}
\int_{-1}^0 \int_{M} \ip{\gU, \tfrac{\del \eta}{\del t}} - \ip{F, D \eta} \, dV
\, dt =
0.
\end{align*}
Using this we can estimate
\begin{align*}
\int_{-1}^0 & \int_M \ip{ \gU, \tfrac{\del \alpha}{\del t}} - \ip{F, D \alpha}
dV
\, dt\\
&=\ \brs{ \int_{-1}^0 \int_M \ip{ \gU, \tfrac{\del (\ga - \eta)}{\del t}} -
\ip{F,
D(\ga - \eta)} \, dV \, dt }\\
&=\ \brs{\int_{-1}^0 \int_M \ip{ \tfrac{\del \gU}{\del t}, \ga - \eta} - \ip{F,
[1
- \inf_i (1 - \phi_i)] D \ga} - \ip{F, \ga \wedge d( \inf_i (1 - \phi_i))}
\, dV \, dt }\\
&=\ \brs{ I_1 + I_2 + I_3 } \\
& \leq \sum_{j=1}^3 \brs{I_j}.
\end{align*}
First, since we have almost everywhere convergence of $\ga$ to $\eta$ and
$\frac{\del \gU}{\del t}$ is in $L^2$ we have $\lim_{r_0 \to 0} I_1 = 0.$ 
Similarly since $[1 - \inf_i (1 - \phi_i)]$ goes to zero uniformly one has that
$\lim_{r_0 \to 0} I_2 = 0$.  For the final term, we observe using H\"older's
inequality and (\ref{weakcompactlemma105}) that
\begin{align*}
\lim_{r_0 \to 0} \brs{I_3} &\leq\ C \lim_{r_0 \to 0} \nm{F}{L^2( \cup_i
P_{r_i}(z_i))} \left[ \int_{-1}^0 \int_M \brs{\N \inf_{1 \leq i \leq l}
(1 - \phi_i)}^2 \, dV \, dt 
\right]^{\frac{1}{2}}\\
&\leq\ C \lim_{r_0 \to 0} \nm{F}{L^2( \cup_i P_{r_i}(z_i))} \left[
\sum_{i=1}^{l}
r_i^{-2} \brs{P_{r_i}(z_i)} \right]^{\frac{1}{2}}\\
&\leq\ C \lim_{r_0 \to 0} r_0 \left[ \sum_{i=1}^l r_i^{-4} \brs{P_{r_i}(z_i)}
\right]^{\frac{1}{2}}\\
&=\ 0.
\end{align*}
The lemma follows.
\end{proof}
\end{lemma}

Combining the result of Lemma \ref{lem:Sigmacl}, Lemma \ref{weakcompactlemma3},
and Lemma \ref{weakcompactlemma10}, the results of Theorem
\ref{fullweakcompactthm} follow.
\end{proof}
\end{thm}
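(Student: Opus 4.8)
The plan is to verify the weak formulation \eqref{weaksoln} for $\N_t$ directly, by an excision-and-limit argument that deletes a shrinking parabolic neighborhood of $\Sigma$. First I would dispatch the integrability requirements of Definition \ref{def:YMfwksol}: the uniform bound $\YM(\N^i_t) \leq C$ together with the weak $L^2_{\mathrm{loc}}$ convergence of $F_{\N^i_t}$ and $\tfrac{\del \N^i_t}{\del t}$ gives $F_{\N_t} \in L^\infty([-1,0]; L^2(\Lambda^2(\Ad E)))$ and $\gU_t := \N_t - \N_0 \in L^2 \subset L^1([-1,0]; L^2(\Lambda^1(\Ad E)))$. The content of the claim is thus the integral identity, which by Lemma \ref{weakcompactlemma3} already holds on the complement of $\Sigma$; the danger is that $\Sigma$, although Lebesgue-null, could a priori carry distributional mass for the equation.

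Next I would construct the excising cutoffs. Fix a test form $\ga$ and a compact set $K$ containing its support. For each $r_0 > 0$, the Vitali cover produced in the proof of Lemma \ref{weakcompactlemma5} supplies finitely many parabolic balls $\{P_{r_i}(z_i)\}_{i=1}^{l}$ with $r_i < r_0$ covering $K \cap \Sigma$ and obeying the codimension-two capacity bound $\sum_{i=1}^{l} r_i^{n-2} \leq C\,\YM(\N_{-1})$, equivalently $\sum_{i=1}^{l} r_i^{-4}\brs{P_{r_i}(z_i)} \leq C\,\YM(\N_{-1})$, with $\brs{\cdot}$ Lebesgue measure on $\bRn \times \bR$. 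Choosing $\phi_i$ supported in $P_{2r_i}(z_i)$ and identically $1$ on $P_{r_i}(z_i)$, set $\eta := \ga \inf_i(1 - \phi_i)$, a form that is Lipschitz in space and compactly supported in $(\bRn \times (-1,0)) \setminus \Sigma$. Because $\N_t$ is a classical solution on $\supp \eta$ by Lemma \ref{weakcompactlemma3}, the identity $\int_{-1}^{0} \int_M \ip{\gU, \tfrac{\del \eta}{\del t}} - \ip{F, D\eta}\, dV\, dt = 0$ is valid.

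Then I would subtract and let $r_0 \to 0$. Integrating by parts in $t$ (the boundary terms vanish since $\ga$, hence $\eta$, vanish at the endpoints) and expanding $D\eta$ via the product rule, the quantity $\int \ip{\gU, \tfrac{\del \ga}{\del t}} - \ip{F, D\ga}$ becomes a sum of three error terms: $I_1 = \int \ip{\tfrac{\del \gU}{\del t}, \ga - \eta}$, $I_2 = -\int \ip{F, [1 - \inf_i(1-\phi_i)]D\ga}$, and $I_3 = \int \ip{F, \ga \wedge d(\inf_i(1-\phi_i))}$. The first two vanish in the limit by dominated convergence, since $\ga - \eta \to 0$ and $1 - \inf_i(1-\phi_i) \to 0$ almost everywhere and boundedly while $\tfrac{\del \gU}{\del t}, F, D\ga \in L^2_{\mathrm{loc}}$. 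For $I_3$ I would use H\"older's inequality, $\brs{I_3} \leq C\, \nm{F}{L^2(\cup_i P_{r_i}(z_i))}\, \nm{\N \inf_i(1-\phi_i)}{L^2}$, bound the gradient factor by $\big(\sum_i r_i^{-2}\brs{P_{r_i}(z_i)}\big)^{1/2} \leq r_0 \big(\sum_i r_i^{-4}\brs{P_{r_i}(z_i)}\big)^{1/2} \leq C r_0\, \YM(\N_{-1})^{1/2}$, and note $\nm{F}{L^2(\cup_i P_{r_i}(z_i))} \to 0$ because $\cup_i P_{r_i}(z_i)$ shrinks to the null set $\Sigma$ while $F \in L^2_{\mathrm{loc}}$. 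Hence $\brs{I_3} \to 0$, which yields \eqref{weaksoln}.

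The main obstacle is precisely the term $I_3$: the gradients of the individual cutoffs blow up like $r_i^{-1}$, so their smallness is not automatic, and it is the $(n-2)$-dimensional parabolic Hausdorff bound of Lemma \ref{weakcompactlemma5} — equivalently, the fact that $\Sigma$ can be covered by parabolic balls of total Lebesgue measure $O(r_0^2)$ while $\sum_i r_i^{-4}\brs{P_{r_i}(z_i)}$ stays bounded — that allows the gradient energy of the cutoffs to be absorbed against the vanishing $L^2$ mass of $F$ near $\Sigma$. A minor technical point to address carefully is that $\inf_i(1-\phi_i)$ is only Lipschitz, but that regularity is enough for the distributional integration by parts used above.
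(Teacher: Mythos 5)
Your proposal correctly reproduces the paper's argument for one of the four components of the theorem — namely Lemma \ref{weakcompactlemma10}, that $\N_t$ is a weak solution. The cutoff construction, the decomposition into $I_1, I_2, I_3$, the H\"older estimate on $I_3$, and the identification of the $(n-2)$-dimensional bound $\sum_i r_i^{-4}\brs{P_{r_i}(z_i)} \lesssim \mathcal{P}^{n-2}(\Sigma)$ as the decisive input are all the same as the paper's. (One cosmetic difference: the paper keeps $\nm{F}{L^2(\cup_i P_{r_i})}$ merely bounded and extracts the vanishing factor of $r_0$ from $r_i^{-2}\brs{P_{r_i}} \leq r_0^2\, r_i^{-4}\brs{P_{r_i}}$, whereas you additionally note that the $L^2$ mass itself vanishes over the shrinking neighborhood. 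Either observation suffices given the other factor is bounded.)

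The gap is that you treat the remaining three-quarters of the theorem as already established. The full claim asserts: (a) the existence of a closed set $\Sigma$; (b) locally finite $(n-2)$-dimensional parabolic Hausdorff measure of $\Sigma$; (c) smoothness of $\N_t$ (after gauge transformation) on the complement of $\Sigma$; and (d) the weak solution property. You prove only (d), leaning explicitly on (b) and (c) as the internal Lemmas \ref{weakcompactlemma5} and \ref{weakcompactlemma3} for the Vitali cover and for the validity of the integral identity away from $\Sigma$ — but those are not separate results available to you; they are part of what must be shown. In the paper these are where the real geometric analysis lives: $\Sigma$ must first be defined as the entropy-concentration set $\bigcap_{r>0}\{z : \liminf_k \Phi^k_z(r) \geq \ge_0\}$; closedness uses a $\liminf$ interchange plus dominated convergence for the Gaussian weight; smoothness away from $\Sigma$ requires the $\ge$-regularity theorem, curvature derivative estimates, Uhlenbeck gauge fixing, and a gauge-patching argument; and the Hausdorff measure bound requires splitting the $\Psi$ entropy near/far from the ball, controlling the far part by the monotonicity formula and the near part by the energy, together with Vitali's covering lemma. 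Without these, the excision-and-limit argument has nothing to excise.

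A secondary point: your proposal does not specify the definition of $\Sigma$ at all, so the claim "$\cup_i P_{r_i}(z_i)$ shrinks to the null set $\Sigma$" is unsupported — one needs the finite $\mathcal{P}^{n-2}$-measure of $\Sigma$ (hence zero Lebesgue measure, since $n-2 < n+2$) to justify dominated convergence there.
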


\subsection{Structure of limit measures} \label{limmeasstruct}

Assume the setup of Theorem \ref{fullweakcompactthm}.  Observe that the measures
\begin{equation*}
\sqg{ \brs{F_{\N_t^i}}^2 dV \, dt } \text{ and }\sqg{\brs{\frac{\del
\N^i_t}{\del t}}^2
dV \, dt }
\end{equation*}
admit subsequences converging in the sense of Radon measures to some
limit measures.  We can compare these to the measures induced by the weak
$H_1^2$ limit $\N$ to define measures $\mu,$ $\nu$ and $\eta$ via
\begin{align*}
\brs{F_{\N^i_t}}^2 \, dV \, dt \to&\ \brs{F_{\N^{\infty}_t}}^2 \, dV \, dt + \nu
\equiv \mu,\\
\brs{\frac{ \del \N^i_t}{\del t}}^2 \, dV \, dt \to&\ \brs{ \frac{\del
\N^{\infty}_t}{\del t}}^2 \, dV \, dt + \eta.
\end{align*}

The remainder of the section consists of a series of lemmas further refining the
nature of these measures.

\begin{lemma}\label{lem:8.2.5LWYM} Fix $z = (x,t) \in M \times [-1 ,0]$
and $\phi \in \mathcal{B}_x$.  Then
\begin{equation*}
\Theta(\mu,z) := \lim_{R \to 0} R^2\int_{T_R(z)} \phi^2(x) G_z(x,t) \, d
\mu(x,t)
\end{equation*}
exists and is upper semicontinuous for all $z \in M \times [0, \infty)$.
 Moreover,
\begin{align*}
 \Sigma = \left\{ z \in M \times (0,\infty) \mid \ge_0 \leq \Theta(\mu,z) <
\infty \right\}.
\end{align*}

\begin{proof} We consider the limit as $i \to \infty$ in the monotonicity
inequality (\ref{Psimono}).  In particular, for $0 < R \leq R_0$, let
\begin{align*}
f(R,d\mu) =&\ e^{CR} \left[ \frac{R^2}{2} \int_{T_{R}} \phi^2 G_z \, d \mu + C
e^{CR} R \YM(\N_{-1}) \right].
\end{align*}
We observe that (\ref{Psimono}) implies that
\begin{align*}
f(R,\brs{F_{\N^i_t}}^2 dV) =&\ e^{C R} \left[ \Psi_{z_0}(R,\N^i_t) + C R
\YM(\N_{-1}) \right]\\
\leq&\ e^{CR} \left[ e^{C(R_0 - R)} \Psi_{z_0}(R,\N_t^i) + C(R_0 - R)
\YM(\N_{-1}) + C R \YM(\N_{-1}) \right]\\
=&\ f(R_0,\brs{F_{\N^i_t}}^2 dV).
\end{align*}
Using that $\brs{F_{\N_t^i}}^2 dV$ converges to $d\mu$, it follows that $f(R,d
\mu)$ is monotone nondecreasing as well.  It follows that $\lim_{R \to 0} f(R,d
\mu)$ exists, and by elementary arguments the limit defining $\Theta$ also
exists, and is upper semicontinuous.
\end{proof}
\end{lemma}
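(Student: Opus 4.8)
The plan is to obtain the density, its semicontinuity, and the description of $\Sigma$ all from the almost-monotonicity of $\Psi$ in Theorem \ref{thm:8.2.2LWYM} together with the weak-$*$ convergence $\brs{F_{\N^i_t}}^2\,dV\,dt\to\mu$ of energy measures supplied by the setup of Theorem \ref{fullweakcompactthm}. First I would replace $\Psi$ by the genuinely monotone quantity $f(R;d\nu)$ obtained, as in (\ref{Psimono}), by multiplying $\tfrac{R^2}{2}\int_{T_R(z)}\phi^2 G_z\,d\nu$ by $e^{CR}$ and adding a term that tends to $0$ with $R$; then $R\mapsto f(R;\brs{F_{\N^i_t}}^2\,dV\,dt)$ is nondecreasing on $(0,R_0]$, uniformly in $i$. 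Since $\mu$ is a finite Radon measure, only countably many radii $R$ satisfy $\mu(M\times\{t_0-R^2\})+\mu(M\times\{t_0-4R^2\})>0$; for every other (``good'') $R$ the weight $\phi^2 G_z\mathbf{1}_{T_R(z)}$ is bounded, compactly supported and $\mu$-a.e.\ continuous, so $f(R;\brs{F_{\N^i_t}}^2\,dV\,dt)\to f(R;d\mu)$. Hence $f(\cdot\,;d\mu)$ is nondecreasing along good radii and bounded above by its value at $R_0$, so it has a finite limit as $R\to0$; dividing out the correction factors, which tend to $1$ and $0$, yields the existence and finiteness of $\Theta(\mu,z)=2\lim_{R\to0}f(R;d\mu)$.

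For upper semicontinuity I would first note that $\Theta(\mu,z)$ is independent of the choice of $\phi\in\mathcal B_x$: the difference of two admissible cutoffs is supported away from $x_0$, and there $R^2\phi^2 G_z\le C R^{2-n}e^{-c/R^2}\to0$, so the two limits agree. Thus near a fixed $z$ one may compute $\Theta(\mu,\cdot)$ using one fixed cutoff, so that $\Theta(\mu,\cdot)=2\inf_R f(R;d\mu)$ expresses it as an infimum over (good) radii of functions of $z$. For a fixed good $R$ the map $z\mapsto f(R;d\mu)$ is continuous, since $G_z$ depends smoothly on $z$ on the compact slab $T_R(z)$, which stays away from the slice $\{t=t_0\}$, and the boundary-slice correction is a dominated, $\mu$-null perturbation. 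An infimum of continuous functions is upper semicontinuous, which gives the claim.

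For the identification of $\Sigma=\bigcap_{r>0}\{z:\liminf_k\Phi^k_z(r)\geq\ge_0\}$, I would invoke Lemma \ref{lem:cute}: at comparable scales $\Phi_z(r;\N^k_t)$ and $\Psi_z(r;\N^k_t)$ differ by at most a fixed factor, so---after the customary adjustment of the value of $\ge_0$---membership in $\Sigma$ is equivalent to $\liminf_k\Psi_z(r;\N^k_t)\geq\ge_0$ for every $r$. For good $r$ this $\liminf$ equals $\tfrac{r^2}{2}\int_{T_r(z)}\phi^2 G_z\,d\mu$ by the weak-$*$ convergence above, with the uniform tightness supplied by $\YM(\N^i_t)\le C$. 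Since by construction $\tfrac{r^2}{2}\int_{T_r(z)}\phi^2 G_z\,d\mu\to\tfrac{1}{2}\Theta(\mu,z)$ as $r\to0$, letting $r\to0$ through good radii gives $z\in\Sigma\iff\Theta(\mu,z)\geq\ge_0$; combined with the finiteness from the first step, this is $\Sigma=\{z:\ge_0\le\Theta(\mu,z)<\infty\}$.

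I expect the main friction to be the mismatch between the weak-$*$ convergence of the energy measures---which only controls $\int g\,d\mu_i$ for $g$ that is $\mu$-a.e.\ continuous---and the need to test $\Psi$ against the \emph{sharp} slab $T_R(z)$; this is what forces the passage to the co-countable family of good radii, and that restriction must then be threaded consistently through the monotonicity argument for $\Theta$, the infimum-of-continuous argument for semicontinuity, and the comparison with $\Sigma$. The only other delicate bookkeeping is checking that the universal constants in the $\ge$-regularity threshold of Theorem \ref{thm:eregularity}, in the comparability of Lemma \ref{lem:cute}, and in the definition of $\Sigma$ are compatible, so that the displayed equality holds with the single constant $\ge_0$.
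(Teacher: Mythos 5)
Your proposal follows essentially the same route as the paper: pass the almost-monotonicity of $\Psi$ from Theorem \ref{thm:8.2.2LWYM} through the weak-$*$ convergence of the energy measures to obtain monotonicity of $f(R;d\mu)$, and hence existence of the limit. Where the paper's proof compresses everything beyond this into ``by elementary arguments'' (and in fact offers no argument at all for the final claim identifying $\Sigma$ with $\{\ge_0 \le \Theta(\mu,\cdot) < \infty\}$), you supply the missing pieces: restricting to the co-countable set of ``good'' radii so that the weak-$*$ convergence actually applies to the indicator of the slab $T_R(z)$, the independence of $\Theta$ from the cutoff $\phi$, the infimum-of-continuous-functions argument for upper semicontinuity, and the comparison with $\Sigma$ via Lemma \ref{lem:cute}. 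You also correctly flag the two real points of friction — that the good-radius restriction has to be threaded consistently through all three arguments, and that the value of $\ge_0$ appearing in the definition of $\Sigma$ and in the statement of the lemma must be reconciled through the multiplicative constants of Lemma \ref{lem:cute}; the paper is silent on both, and a fully rigorous account would have to address them as you indicate.
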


\begin{lemma} \label{weaktostronglem10} For $\PP^{n-2}$-almost everywhere $z \in
\Sigma$, one
has
\begin{align*}
 \lim_{R \to 0} R^{2-n} \int_{P_R(z)} \brs{F_{\N_t}}^2 \, dV \, dt = 0,
\qquad
\Theta(\mu,z) = \Theta(\nu,z) \geq \ge_0.
\end{align*}
\begin{proof} To show the first claim, let
\begin{align*}
 K_j = \sqg{ z \in \Sigma \ |\ \limsup_{R \to 0} R^{2-n} \int_{P_R(z)}
\brs{F_t}^2 \,
dV \, dt > j^{-1}
}.
\end{align*}
We will show that the $(n-2)$-parabolic Hausdorff measure of $K_j$ is zero for
each $j$, which suffices.  Fixing some $\gd > 0$ we can apply Vitali's covering
lemma to obtain a covering of $K_j$ by disjoint parabolic balls
$P_{r_{k}}(z_{k})$
with $z_{k} \in K_j, 5 r_{k} \leq \gd$, such that $K_j \subset \bigcup P_{5
r_{k}}(z_{k})$.  It follows that there
exists $C > 0$ such that
\begin{align*}
 \PP^{n-2}(K_j) &\leq\ \lim_{\gd \to 0} \sum_{k} (5 r_{k})^{n-2}\\
 &\leq\ C j \lim_{\gd \to 0} \int_{N_{\gd}(\Sigma)} \brs{F_t}^2 \, dV \, dt\\
 &=\ 0,
\end{align*}
where $N_{\gd}(\Sigma)$ indicates the parabolic $\gd$-tubular neighborhood of
$\Sigma$, and the last line follows by the dominated convergence theorem.  The
second claim now follows from the first and the definitions of $\mu,\nu$.
\end{proof}
\end{lemma}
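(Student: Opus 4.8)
The plan is to prove the two assertions in turn: the pointwise decay of the rescaled energy is the substantive part, and the identity of densities then follows by a short additivity argument together with Lemma \ref{lem:8.2.5LWYM}.

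\emph{First claim.} For $j \in \mathbb{N}$ set
\[
K_j := \left\{ z \in \Sigma \mid \limsup_{R \to 0} R^{2-n} \int_{P_R(z)} \brs{F_{\N_t}}^2 \, dV \, dt > \tfrac{1}{j} \right\},
\]
so that $\{ z \in \Sigma : \limsup_{R \to 0} R^{2-n}\int_{P_R(z)} \brs{F_{\N_t}}^2 \, dV\,dt > 0\} = \bigcup_j K_j$, and it suffices to show $\PP^{n-2}(K_j) = 0$ for each $j$. Fix $j$ and $\gd > 0$. For every $z \in K_j$ there are arbitrarily small radii $r < \gd / 5$ with $r^{n-2} < j \int_{P_r(z)} \brs{F_{\N_t}}^2 \, dV \, dt$, so the associated family of parabolic balls is a fine cover of $K_j$; the Vitali covering lemma extracts a countable disjoint subfamily $\{ P_{r_k}(z_k) \}$ with $z_k \in K_j$, $5 r_k < \gd$, and $K_j \subset \bigcup_k P_{5 r_k}(z_k)$. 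Using disjointness and $P_{r_k}(z_k) \subset N_\gd(\Sigma)$ one gets
\[
\PP^{n-2}_{\gd}(K_j) \leq \sum_k (5 r_k)^{n-2} \leq 5^{n-2} j \sum_k \int_{P_{r_k}(z_k)} \brs{F_{\N_t}}^2 \, dV \, dt \leq 5^{n-2} j \int_{N_\gd(\Sigma)} \brs{F_{\N_t}}^2 \, dV \, dt.
\]
Since $\Sigma$ has locally finite $(n-2)$-dimensional parabolic Hausdorff measure by Lemma \ref{weakcompactlemma5} and $n - 2 < n + 2$, it has vanishing $(n+2)$-dimensional parabolic Hausdorff measure, hence Lebesgue measure zero; as $\brs{F_{\N_t}}^2 \in L^1_{loc}$, absolute continuity of the integral forces $\int_{N_\gd(\Sigma)} \brs{F_{\N_t}}^2 \, dV \, dt \to 0$ as $\gd \to 0$. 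Letting $\gd \to 0$ gives $\PP^{n-2}(K_j) = 0$, so the first claim holds for $\PP^{n-2}$-a.e.\ $z \in \Sigma$.

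\emph{Second claim.} I would first check that the absolutely continuous part of $\mu$ makes no contribution to the Gaussian density, i.e.\ $\Theta(\brs{F_{\N_t}}^2 \, dV \, dt, z) = 0$ at every $z$ where the first claim holds. On $T_R(z)$ one has $\brs{t - t_0} \geq R^2$, so $G_z \leq C R^{-n}$ there; decomposing $T_R(z)$ into the dyadic pieces $\left( B_{2^{k+1}R}(x_0) \setminus B_{2^k R}(x_0) \right) \times [t_0 - 4 R^2, t_0 - R^2]$ and using the Gaussian bound $G_z \leq C R^{-n} e^{- 4^k / 16}$ on the $k$-th piece, one obtains
\[
R^2 \int_{T_R(z)} \phi^2 G_z \brs{F_{\N_t}}^2 \, dV \, dt \leq C \sum_{k \geq 0} e^{- 4^k / 16} \, 2^{(k+1)(n-2)} \left[ (2^{k+1} R)^{2-n} \int_{P_{2^{k+1} R}(z)} \brs{F_{\N_t}}^2 \, dV \, dt \right].
\]
For fixed $k$ the bracketed term tends to $0$ as $R \to 0$ by the first claim, while the series is dominated uniformly in $R$ (each bracket being bounded using $\brs{F_{\N_t}}^2 \in L^1$, against the summable factor $e^{-4^k/16} 2^{(k+1)(n-2)}$), so dominated convergence for series yields $\Theta(\brs{F_{\N_t}}^2 \, dV \, dt, z) = 0$. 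Since $\mu = \brs{F_{\N_t}}^2 \, dV \, dt + \nu$ and $\Theta(\cdot, z)$ is additive in the measure, $\Theta(\nu, z)$ exists and equals $\Theta(\mu, z)$; and $\Theta(\mu, z) \geq \ge_0$ on all of $\Sigma$ by Lemma \ref{lem:8.2.5LWYM}. This gives $\Theta(\mu,z) = \Theta(\nu,z) \geq \ge_0$ for $\PP^{n-2}$-a.e.\ $z \in \Sigma$.

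The step I expect to be most delicate is the Vitali covering bookkeeping in the first claim: one must check that the parabolic balls furnished by the $\limsup$ condition genuinely form a fine cover with radii controllable below $\gd/5$ so that the $5r$-covering property of parabolic balls applies, and one must invoke finiteness of $\PP^{n-2}(\Sigma)$ (Lemma \ref{weakcompactlemma5}) to conclude Lebesgue-nullity of $\Sigma$, which is what makes the $\gd \to 0$ limit vanish. The dyadic Gaussian estimate in the second claim is routine once the first claim is in hand.
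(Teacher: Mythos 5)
Your proof is correct and follows the paper's argument essentially verbatim for the first claim (the $K_j$ decomposition, the Vitali covering with the $5r$-property, and the vanishing of $\int_{N_\gd(\Sigma)}\brs{F_{\N_t}}^2$ via Lebesgue-nullity of $\Sigma$). For the second claim the paper merely asserts that it "follows from the first and the definitions of $\mu,\nu$"; your dyadic Gaussian estimate showing $\Theta(\brs{F_{\N_t}}^2\,dV\,dt,z)=0$ at points where the first claim holds, followed by additivity of the density, is precisely the argument being elided there.
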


\begin{lemma}  \label{wktostronglemma5} For
$\PP^{n-2}$-almost everywhere $z \in \Sigma$.
\begin{align*}
\lim_{r \to 0} \lim_{i \to \infty} r^{4-n} \int_{P_r(z)} \brs{ \frac{\del
\N^i_t}{\del t}}^2 \, dV \, dt = 0.
\end{align*}
\begin{proof} We will show that for any $\ge > 0$, the set
\begin{align*}
\CC_{\ge} := \sqg{z \in \Sigma \mid \liminf_{r \to 0} \liminf_{i \to \infty}
r^{4-n}
\int_{P_r(z)} \brs{\frac{ \del \N^i}{\del t}}^2 \, dV \, dt \geq \ge }
\end{align*}
satisfies $\PP^{n-4}(\CC_{\ge}) < \infty$.  Given this, we can express
\begin{align*}
\Sigma' := \left\{ z \in \Sigma \mid \liminf_{r \to 0} \liminf_{i \to \infty}
r^{4-n}
\int_{P_r(z)} \brs{\frac{\del \N^i}{\del t}}^2 \, dV \, dt = 0 \right\} =
\Sigma\
\backslash\ \prs{ \bigcup_{n \in \mathbb N} \CC_{2^{-n}}}.
\end{align*}
In particular, $\Sigma'$ can be obtained from $\Sigma$ by removing a countable
union of sets of finite $\PP^{n-4}$ measure, which has zero $\PP^{n-2}$ measure
by a
standard argument.

To show $\PP^{n-4}(\CC_{\ge}) < \infty$, fix a $\delta > 0$, and apply Vitali's
covering lemma to obtain a collection $\{ z_k \}_{i \in \mathbb{N}} \subset
\Sigma$ and $r_k \in (0,\delta)$ satisfying that $\{ P_{r_k}(z_k) \}$ are
mutually disjoint, $\{ P_{5 r_k}(z_k) \}$ cover $\Sigma$, and furthermore there
is some subsequence $\{ \N^{i}_t \}$ so that for all $k,i$,
\begin{equation*}
r_k^{4-n} \int_{P_{r_k}(z_k)} \brs{\tfrac{\del \N^{i}}{\del t}}^2 \, dV \, dt
\geq \ge.
\end{equation*}
Using this we obtain
\begin{align*}
\mathcal{P}^{n-4}_{5 \gd} \prs{\CC_{\ge}}
&\leq \sum_{k=1}^{\infty} \prs{5 r_k}^{n-4} \\
&= 5^{n-4} \sum_{k=1}^{\infty} r_k^{n-4} \\
&\leq \tfrac{5^{n-4}}{\ge} \sum_{k=1}^{\infty} \int_{P_{r_k}(z_k)} \brs{
\tfrac{\del \N^{i}}{\del t} }^2 \, dV \, dt \\
&\leq \tfrac{5^{n-4}}{\ge} \int_{\bigcup_{k=1}^{\infty} P_{r_k}(z_k)}
\brs{  \tfrac{\del \N^{i}}{\del t}}^2 \, dV \, dt \\
&\leq C(n,\ge) \int_0^2 \int_{B_2} \brs{ \tfrac{\del \N^{i}}{\del t}}^2 \, dV \,
dt\\
&\leq C(n,\ge, \mathcal{YM}(\N^i_{-1})),
\end{align*}
where the last line follows via the Yang-Mills energy monotonicity.  Sending
$\gd$ to zero
proves that $\PP^{n-4}(\CC_{\ge}) < \infty$, finishing the proof.
\end{proof}
\end{lemma}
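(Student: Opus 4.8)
The plan is to mirror the structure of Lemma \ref{weaktostronglem10}: introduce the "bad set" $\CC_{\ge}$ where the scaled $L^2$-norm of $\tfrac{\del \N^i_t}{\del t}$ fails to vanish, and show this set has finite $\PP^{n-4}$-measure, which is a \emph{lower}-dimensional condition than $\PP^{n-2}$ and hence forces the complement $\Sigma'$ to have full $\PP^{n-2}$-measure inside $\Sigma$. The passage from "$\PP^{n-4}(\CC_{\ge}) < \infty$ for each $\ge$" to "$\liminf = 0$ a.e." is the standard argument already invoked above: $\Sigma \setminus \Sigma' = \bigcup_n \CC_{2^{-n}}$ is a countable union of sets each with finite $(n-4)$-dimensional parabolic Hausdorff measure, hence with vanishing $(n-2)$-dimensional parabolic Hausdorff measure, since $n - 4 < n - 2$.

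The core estimate is a Vitali covering argument. For fixed $\gd > 0$, extract disjoint parabolic balls $\{P_{r_k}(z_k)\}$ with $z_k \in \Sigma$, $r_k \in (0,\gd)$, whose $5$-fold dilates cover $\Sigma$, and — crucially — pass to a single subsequence $\{\N^i_t\}$ along which the defining inequality of $\CC_{\ge}$ holds simultaneously for \emph{all} $k$ (this diagonalization is legitimate because for each $z_k \in \CC_{\ge}$ the $\liminf\liminf$ condition provides, for every radius threshold, infinitely many indices $i$; one sorts the balls by a decreasing sequence of radii and diagonalizes). Then
\begin{align*}
\PP^{n-4}_{5\gd}(\CC_\ge) \leq \sum_k (5 r_k)^{n-4} \leq \frac{5^{n-4}}{\ge} \sum_k \int_{P_{r_k}(z_k)} \brs{\tfrac{\del \N^i}{\del t}}^2 \, dV \, dt \leq \frac{5^{n-4}}{\ge} \int_{B_2 \times [-1,0]} \brs{\tfrac{\del \N^i}{\del t}}^2 \, dV \, dt,
\end{align*}
using disjointness of the $P_{r_k}(z_k)$ in the last step. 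The final integral is bounded uniformly in $i$ by the energy monotonicity Lemma \ref{lem:8.2.1LWYM}, which controls $\int_{t_1}^{t_2}\int_M \brs{\tfrac{\del \N^i_t}{\del t}}^2 \phi^2 \, dV\, dt$ in terms of the initial Yang-Mills energy $\YM(\N^i_{-1}) < C$ (after absorbing the lower-order term $\ip{2 \tfrac{\N\phi}{\phi}\hook F, \tfrac{\del\N}{\del t}}$ by Cauchy-Schwarz and using the uniform $L^2$ bound on $F_{\N^i_t}$ from the hypothesis $\YM(\N^i_t) \le \YM(\N^i_{-1})$). Sending $\gd \to 0$ gives $\PP^{n-4}(\CC_\ge) < \infty$.

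The main obstacle I anticipate is \emph{not} the covering estimate but the diagonalization extracting one subsequence that works for the whole (infinite) Vitali family at once — one must be careful that the order of quantifiers in the definition of $\CC_\ge$ (a $\liminf$ over $r$ followed by a $\liminf$ over $i$) really does permit choosing radii $r_k$ and then, along a common subsequence in $i$, achieving the inequality $r_k^{4-n}\int_{P_{r_k}(z_k)}\brs{\tfrac{\del\N^i}{\del t}}^2 \ge \ge$ for all $k$ simultaneously. The clean way is to first fix, for each $k$, a radius $r_k < \gd$ and a tail of indices on which the scaled integral exceeds $\ge/2$ (using the inner $\liminf_i \ge \ge$), then enumerate the countably many balls and take a diagonal subsequence; replacing $\ge$ by $\ge/2$ throughout costs nothing. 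A secondary technical point is that $\bigcup_k P_{r_k}(z_k)$ may not sit inside a fixed compact set unless one localizes first to a compact $K \subset M \times (-1,0)$ — but this is harmless since finiteness of $\PP^{n-4}$ is a local statement and $\Sigma$ is covered by countably many such $K$.
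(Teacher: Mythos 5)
Your proposal is correct and follows essentially the same route as the paper: define $\CC_\ge$, bound $\PP^{n-4}(\CC_\ge)$ by a Vitali covering argument using disjointness and the uniform $L^2_{t,x}$ bound on $\del_t\N^i$ from the energy monotonicity, and conclude since a countable union of sets of finite $\PP^{n-4}$-measure is $\PP^{n-2}$-null. Your added care about the diagonalization (fixing tails with $\ge/2$ and noting that each partial sum over the first $j$ balls is controlled at the $j$-th subsequence index) fills in a step the paper states without justification.
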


\begin{lemma} \label{wktostronglem15} The density function $\Theta(\mu,x)$ is
$\PP^{n-2}$-approximately continuous at $\PP^{n-2}$-almost every $x \in \Sigma$.
 That is, for all $\PP^{n-2}$-a.e. $z \in \Sigma$ one has that for all $\ge >
0$,
\begin{align*}
 \lim_{r \to 0} r^{2-n} \PP^{n-2} \left( \{ w \in P_r(x) \cap \Sigma\ |\
\brs{\Theta(\mu,w) - \Theta(\mu,z)} > \ge \} \right) = 0.
\end{align*}
\begin{proof} Note that for a given $x \in \Sigma$, the density $\Theta(\mu,x)$
is upper semicontinuous, so the set
\begin{equation*}
A_c := \sqg{ z \mid \Theta (\mu, z) < c}
\end{equation*}
 is open. Therefore for any $c_1, c_2 \in [0,\infty)$ with $c_1 < c_2$, the set
$A_{c_2} \backslash A_{c_1}$ is a Borel set and thus measurable.  Hence
 \begin{equation*}
 E_i := \sqg{ z \in \Sigma \mid \frac{(i-1) \ge}{2} \leq \Theta(\mu,z) < \frac{i
\ge}{2} } = A_{ \frac{i \ge}{2}} \backslash A_{\frac{(i-1)\ge}{2}},
 \end{equation*}
 is a Borel set.  Note that, by the definition of $E_i$,
 \begin{equation*}
 \mathcal{P}^{n-2} \prs{\Sigma \backslash \bigcup_i E_i } = 0.
 \end{equation*}
 For all $x \in E_i$, by applying Theorem 3.5 of \cite{Simon} to the measure
$\PP^{n-2}$ we have that
 \begin{align*}
 \lim_{R \to 0} R^{2-n} \mathcal{P}^{n-2} &\prs{ \sqg{ y \in P_r(x) \cap
\Sigma \mid \brs{ \Theta(\mu, w) - \Theta(\mu,z)} > \ge}}\\
 &= \limsup_{R \to 0} R^{2-n} \mathcal{P}\prs{ P_r(z) \cap \prs{\Sigma
\backslash E_i} } \\
 &= 0.
 \end{align*}
The result follows.
\end{proof}
\end{lemma}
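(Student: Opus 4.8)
The plan is to exploit the upper semicontinuity of $\Theta(\mu,\cdot)$ (Lemma \ref{lem:8.2.5LWYM}) to slice $\Sigma$ into countably many Borel pieces on which the density is nearly constant, and then apply a Lebesgue--Besicovitch differentiation theorem (Theorem 3.5 of \cite{Simon}) to the Radon measure obtained by restricting $\PP^{n-2}$ to $\Sigma$. Concretely, I fix $\ge>0$. Since $\Theta(\mu,\cdot)$ is upper semicontinuous, each sublevel set $A_c:=\{z\mid\Theta(\mu,z)<c\}$ is open, so for $i\in\mathbb N$ the band
\[ E_i:=\Big\{z\in\Sigma\ \Big|\ \tfrac{(i-1)\ge}{2}\le\Theta(\mu,z)<\tfrac{i\ge}{2}\Big\}=\Sigma\cap\big(A_{i\ge/2}\setminus A_{(i-1)\ge/2}\big) \]
is Borel. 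Because $\ge_0\le\Theta(\mu,z)<\infty$ on $\Sigma$, the $E_i$ are pairwise disjoint and $\PP^{n-2}\big(\Sigma\setminus\bigcup_{i}E_i\big)=0$.

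Next I would reduce the approximate-continuity assertion to a statement about density points. Fix $i$ and $z\in E_i$. If $w\in P_r(z)\cap E_i$ then $\brs{\Theta(\mu,w)-\Theta(\mu,z)}<\ge/2$, so
\[ \{\,w\in P_r(z)\cap\Sigma\mid\brs{\Theta(\mu,w)-\Theta(\mu,z)}>\ge\,\}\subseteq P_r(z)\cap(\Sigma\setminus E_i). \]
Hence it suffices to show that for each $i$ and $\PP^{n-2}$-a.e. $z\in E_i$ one has $\lim_{r\to0}r^{2-n}\,\PP^{n-2}\big(P_r(z)\cap(\Sigma\setminus E_i)\big)=0$. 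Carrying this out for every $i$, and for $\ge$ running over the sequence $2^{-m}$ (the target set shrinks as $\ge$ grows, so the case $\ge=2^{-m}$ covers all $\ge\ge 2^{-m}$, and only a single countable union of null sets is discarded), yields the lemma.

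Finally I would establish the displayed density limit. By Lemma \ref{weakcompactlemma5}, $\PP^{n-2}(\Sigma\cap K)<\infty$ for every compact $K$, so the restriction $\lambda$ of $\PP^{n-2}$ to $\Sigma$ is a Radon measure. Two standard facts — phrased for the family of parabolic cylinders $\{P_r(z)\}$, equivalently for balls in the parabolic metric $\varrho$, for which the Besicovitch covering theorem and hence Theorem 3.5 of \cite{Simon} apply — give: (i) the upper density $\limsup_{r\to0}r^{2-n}\lambda(P_r(z))$ is finite for $\lambda$-a.e. $z$ (the usual comparison bound for a Hausdorff measure restricted to a set of locally finite measure); and (ii) $\lambda$-a.e. $z\in E_i$ is a density point of $E_i$, i.e. $\lambda(P_r(z)\setminus E_i)/\lambda(P_r(z))\to0$. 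Writing $r^{2-n}\lambda(P_r(z)\cap(\Sigma\setminus E_i))=\big[\lambda(P_r(z)\setminus E_i)/\lambda(P_r(z))\big]\cdot r^{2-n}\lambda(P_r(z))$ and combining (i) with (ii) gives the claimed limit.

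The only genuine obstacle is the geometric point in the last step: Simon's differentiation theorem is stated for Euclidean balls, whereas the natural family here is parabolic cylinders of eccentricity $r\times\cdots\times r\times r^2$. Since $P_r(z)$ is comparable with uniform constants to the $\varrho$-ball of radius $r$, and $(\mathbb R^n\times\mathbb R,\varrho)$ supports a Besicovitch/Vitali covering theorem, the differentiation theorem and the upper-density estimate transfer without change; this is exactly what legitimizes the normalization by $r^{2-n}$. Everything else is the level-set decomposition, which is dictated by the upper semicontinuity of the density.
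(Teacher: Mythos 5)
Your proposal is correct and follows essentially the same route as the paper: the level-set decomposition of $\Sigma$ into the Borel bands $E_i$ via upper semicontinuity of $\Theta(\mu,\cdot)$, followed by the differentiation theorem (Theorem 3.5 of \cite{Simon}) applied to $\PP^{n-2}$ restricted to $\Sigma$. In fact you supply details the paper leaves implicit (the containment of the exceptional set in $P_r(z)\cap(\Sigma\setminus E_i)$, the countable reduction in $\ge$, and the transfer of the covering/differentiation machinery to parabolic cylinders), so no gap.
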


\begin{lemma} \label{wktostronglemma17} One has that $\sqg{\N_t^i}$ does not
converge
to $\N_t$ strongly in $H^{1,2}_{loc}$ if and only if $\mathcal{P}^{n-2}(\Sigma)
> 0$
and $\nu\prs{M \times [-1,0]} > 0$.

\begin{proof}
It follows from Lemma \ref{weaktostronglem10} that if $\mathcal{P}^{n-2}(\Sigma)
>
0$ then for $\mathcal{P}^{n-2}$ almost everywhere $z \in \Sigma$ one has
\begin{equation*}
\Theta\prs{\nu,z} = \Theta \prs{\mu,z} \geq \ge_0,
\end{equation*}
hence $\nu\prs{M \times [-1,0]} = \nu\prs{\Sigma} >0$, and
$\tfrac{1}{2}\brs{F_{\N_t^i}}^2 \, dV \, dt$ does not converge to
$\tfrac{1}{2}\brs{F_{\N_t}}^2 \, dV \, dt$. Therefore $\sqg{\N_t^i}$ doesn't
converge to
$\N_t$ strongly in $H^{1,2}_{loc}$. 
Conversely, directly from the definition of $\nu$, if $\nu\prs{M \times
[-1,0] } > 0$ then $\{\N^i_t\}$ cannot converge strongly to $\N$ in
$H^{1,2}_{loc}$.
\end{proof}
\end{lemma}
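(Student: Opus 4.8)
The plan is to show that the three conditions --- (i) $\{\N^i_t\}$ does not converge to $\N_t$ strongly in $H^{1,2}_{loc}$, (ii) $\nu\prs{M\times[-1,0]}>0$, and (iii) $\mathcal{P}^{n-2}(\Sigma)>0$ --- are mutually equivalent; the asserted biconditional is then immediate, since a conjunction of pairwise equivalent statements is equivalent to any one of them.

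First I would establish (iii) $\Rightarrow$ (ii) $\Rightarrow$ (i). Assuming $\mathcal{P}^{n-2}(\Sigma)>0$, pick a compact $K\subset\Sigma$ with $\mathcal{P}^{n-2}(K)>0$. By Lemma \ref{weaktostronglem10} one has $\Theta(\nu,z)=\Theta(\mu,z)\geq\ge_0$ for $\mathcal{P}^{n-2}$-a.e. $z\in K$; unwinding the definition of $\Theta$ from Lemma \ref{lem:8.2.5LWYM} and using the Gaussian bounds $c\,R^{-n}\leq G_z\leq C\,R^{-n}$ on the relevant annular regions, this forces the $(n-2)$-dimensional parabolic lower density of $\nu$ at such $z$ to be bounded below by $c\,\ge_0$, whence a standard density comparison (as in \cite{Simon}) gives $\nu(K)\geq c\,\ge_0\,\mathcal{P}^{n-2}(K)>0$, i.e. (ii). Then $\brs{F_{\N^i_t}}^2\,dV\,dt$ converges as Radon measures to $\mu=\brs{F_{\N_t}}^2\,dV\,dt+\nu\neq\brs{F_{\N_t}}^2\,dV\,dt$, so $F_{\N^i_t}$ does not converge to $F_{\N_t}$ in $L^2_{loc}$. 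Since strong $H^{1,2}_{loc}$ convergence of the connections would force strong $L^2_{loc}$ convergence of their curvatures --- using $\N^i_t\to\N_t$ in $L^2_{loc}$ together with Sobolev embedding (and, where the quadratic term in $F=F_{\N_{\refc}}+D_{\N_{\refc}}\gU^i_t+\tfrac{1}{2}[\gU^i_t\wedge\gU^i_t]$ needs more room, the local Coulomb gauges used in Lemma \ref{weakcompactlemma3}) --- condition (i) follows. The implication (ii) $\Rightarrow$ (i) is exactly this last step.

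For the converse I would argue the contrapositive: if $\nu\prs{M\times[-1,0]}=0$ or $\mathcal{P}^{n-2}(\Sigma)=0$, then $\{\N^i_t\}$ converges to $\N_t$ strongly in $H^{1,2}_{loc}$. These two hypotheses are in fact equivalent: $\nu$ is supported on $\Sigma$ because on compact subsets of $\prs{M\times(-1,0)}\backslash\Sigma$ Lemma \ref{weakcompactlemma3} gives subconvergence modulo gauge in $C^{l,\ga}_{loc}$, hence of the curvature densities; meanwhile Lemma \ref{lem:8.2.5LWYM} shows $\Theta(\mu,\cdot)$ is finite and upper semicontinuous, so $\nu\leq\mu$ has $(n-2)$-parabolic upper density bounded on compact subsets of $\Sigma$, giving $\nu(A)\leq C_K\,\mathcal{P}^{n-2}(A)$ for Borel $A\subset K\subset\Sigma$ and therefore $\mathcal{P}^{n-2}(\Sigma)=0\Leftrightarrow\nu=0$. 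It remains to see $\nu=0\Rightarrow$ strong convergence: when $\nu=0$ the curvatures converge strongly in $L^2_{loc}$ (weak $L^2_{loc}$ convergence plus convergence of the $L^2$ densities), and combining this with the given strong $L^2_{loc}$ convergence $\gU^i_t\to\gU_t$ and the expansion of $F$ above recovers strong $L^2_{loc}$ convergence of $D_{\N_{\refc}}\gU^i_t$ (the quadratic term converging in the relevant $L^p$ by Sobolev embedding, and the convergence being smooth away from the now $\mathcal{P}^{n-2}$-negligible set $\Sigma$ by Lemma \ref{weakcompactlemma3}); this is strong $H^{1,2}_{loc}$ convergence. This closes the loop.

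The step I expect to be the main obstacle is precisely the identification of failure of strong $H^{1,2}_{loc}$ convergence with positivity of $\nu$: passing between the connection $1$-forms and their curvatures runs through the quadratic nonlinearity $[\gU\wedge\gU]$, which in dimensions $n\geq 5$ is not controlled in $L^2$ by $H^{1,2}$ bounds alone, so the clean statement likely has to be read modulo gauge and to lean on the local gauge fixing together with the $\ge$-regularity theorem (Theorem \ref{thm:eregularity}) and Lemma \ref{lem:cute} already invoked for Lemma \ref{weakcompactlemma3}. The remaining density-comparison estimates relating $\nu$ and $\mathcal{P}^{n-2}$ restricted to $\Sigma$ are routine given Lemmas \ref{lem:8.2.5LWYM} and \ref{weaktostronglem10}.
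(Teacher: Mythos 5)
Your proposal is correct in outline and is in fact more complete than the paper's own proof. The chain $\mathcal{P}^{n-2}(\Sigma)>0 \Rightarrow \nu>0 \Rightarrow$ failure of strong convergence coincides with the paper's argument (which likewise invokes Lemma \ref{weaktostronglem10} and the lower density bound $\Theta(\nu,z)\geq\ge_0$, merely stating the density comparison you spell out via Simon). Where you genuinely differ is that you also prove the reverse implication of the biconditional, via the contrapositive: you show $\nu$ is carried by $\Sigma$ with two-sided $(n-2)$-parabolic density bounds, so that $\nu=0$ and $\mathcal{P}^{n-2}(\Sigma)=0$ are equivalent, and then that $\nu=0$ upgrades weak to strong convergence. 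The paper's ``conversely'' only re-establishes $\nu>0\Rightarrow$ not strong --- the same direction again --- and leaves the converse direction implicit (it is what is silently used in the second alternative of Theorem \ref{weaktostrongH12}). Your three-way equivalence is therefore the right way to actually close the proof, and the ingredients you cite (Lemma \ref{lem:8.2.5LWYM}, Lemma \ref{weaktostronglem10}, Lemma \ref{weakcompactlemma3}) are the correct ones.

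One caveat, which you already half-acknowledge as ``the main obstacle'': the passage between strong $H^{1,2}_{loc}$ convergence of the connection forms and strong $L^2_{loc}$ convergence of the curvatures cannot be completed purely through the expansion $F_{\N^i}=F_{\N_{\refc}}+D_{\N_{\refc}}\gU^i+\tfrac{1}{2}[\gU^i\wedge\gU^i]$, even for $n=4$: strong $L^2_{loc}$ convergence of $\gU^i$ together with a uniform $H^{1,2}$ bound only yields $[\gU^i\wedge\gU^i]\to[\gU\wedge\gU]$ in $L^{q}_{loc}$ for $q<n/(n-2)\leq 2$, and in the reverse direction $L^2$ control of the exterior part $D_{\N_{\refc}}\gU^i$ does not control the full covariant derivative $\N_{\refc}\gU^i$. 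Both steps must therefore be routed through the gauge-fixed smooth convergence off $\Sigma$ from Lemma \ref{weakcompactlemma3} combined with $\mathcal{P}^{n-2}(\Sigma)=0$, exactly as you propose at the end; since the paper itself treats this point as immediate, this is a shared imprecision rather than a defect of your argument relative to the source.
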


\section{Tangent measures and stratification} \label{sec:tmstrat}

In this section we establish results on the structure of tangent measures along
Yang-Mills flow which will be central in the sequel.  First we discuss the space
$T_z\mu$ of all tangent measures of $\mu$ for $z \in \Sigma$. We first show that
every tangent measure is invariant under parabolic dilations.  Building upon
this, we will associate to each tangent measure a nonnegative integer which is
the
dimension of the largest parabolic dilation invariant subspace which is a subset
of the points of maximal density.  Using this dimension we can then stratify the
set $\Sigma$ accordingly. In
particular, we demonstrate enough structure on the tangent measures to apply a
stratification result of White \cite{White}, which generalizes Federer's
dimension reduction argument \cite{FZ}.

\subsection{Setup}

For the following we set
\begin{equation*}
\bR_+^{n+1} : = \bR^n \times [0,\infty), \quad \bR_-^{n+1} : = \bR^n \times
(-\infty,0].
\end{equation*}
\begin{defn}
For $z_0 = \prs{x_0, t_0} \in \bRn \times \bR$ and $\la >0$, define
\emph{parabolic dilation} and \emph{Euclidean dilation} respectively by,
\begin{align*}
\mathsf{P}_{z_0,\la}(x,t) &:= \prs{\gl(x-x_0), \gl^2 (t-t_0)},\\
\mathsf{D}_{x_0,\la}(x) &:= \gl(x-x_0).
\end{align*}
Moreover, we may apply parabolic rescaling to a measure as follows. For all $A
\subset \bRn \times \bR$, we have
\begin{align*}
\mathsf{P}_{z_0,\la}(\mu)(A) :=&\ \gl^{2-n} \mu \prs{\mathsf{P}_{z_0,\la}(A)},\\
\mathsf{D}_{x_0,\gl}\prs{\mu} \prs{A} :=&\ \gl^{4-n} \mu
\prs{\mathsf{D}_{x_0,\gl}
A}.
\end{align*}
We note that this scaling law reflects the scaling properties for Yang-Mills
flow densities, and not a pure parabolic rescaling of say Euclidean measure.
\end{defn}
\begin{defn} For any $z_0 \in \Sigma$, the \emph{tangent measure cone of $\mu$
at $z_0$}, $T_{z_0}(\mu)$, consists of all nonnegative Radon measures on
$\mathbb{R}^{n+1}$ that are given by
\begin{equation*}
T_{z_0}(\mu) := \left\{ \mu^* \mid \exists r_i \to 0, \text{ such that
}\mathsf{P}_{z_0,r_i}(\mu) \to \mu^* \right\}.
\end{equation*}
\end{defn}
Fixing $z_0 \in \Sigma$ and $\mu^* = \mu^*_s ds \in T_{z_0}(\mu)$, we set,
for any $z = (x,t) \in \mathbb{R}^{n+1}$,
\begin{equation*}
\Theta \prs{\mu^*, z , r} := r^4 \int_{M \times \{{t-r^2}\}} G_z(y,s) \, d
\mu_s^*(y).
\end{equation*}
This is monotonically nondecreasing with respect to $r$ so that the
\emph{$\mu^*$
density at $z$}, given by
\begin{equation*}
\Theta(\mu^*,z) := \lim_{r \to 0} \Theta\prs{\mu^*,z,r},
\end{equation*}
exists and is upper semicontinuous for $z = (x,t) \in \mathbb{R}^{n+1}$.
Moreover, for any $z_0 \in \Sigma$ and $\mu^* \in T_{z_0}(\mu)$, we set
\begin{align*}
U\prs{\Theta\prs{\mu^*}} &:= \left\{ z \in
\bR^{n+1} \mid \Theta\prs{\mu^*,z} = \Theta\prs{\mu^*,0} \right\},\\
V \prs {\Theta\prs{\mu^*}} &:= U\prs{\Theta\prs{\mu^*}} \cap
\prs{\bR^{n} \times \{0\} },\\
W \prs{\Theta\prs{\mu^*}} &:= \left\{(x,0) \in \bRn \times \bR \mid \forall
(y,s) \in \bR^{n+1}_- ,\Theta \prs{\mu^*, \prs{y,s}} = \Theta
\prs{\mu^*,\prs{x+y,s}} \right\}.
\end{align*}
\begin{defn} For $z_0 \in \Sigma$ and $\mu^* \in T_{z_0}(\Sigma)$, let
\begin{equation*}
\dim \prs{\Theta \prs{\mu^*}} = 
\begin{cases}
\dim \prs{V\prs{\Theta\prs{\mu^*}}} + 2, &\text{ if }
U\prs{\Theta(\mu^*)} = V\prs{\Theta \prs{\mu^*}} \times
\mathbb{R},\\
\dim \prs{V\prs{\Theta\prs{\mu^*}}} &\text{ otherwise.}
\end{cases}
\end{equation*}
\end{defn}

\subsection{Preliminary results}

In this subsection we show various preliminary results on the structure of
tangent measures.  First we establish the existence of at least one tangent
measure in Lemma \ref{tangmeasexist}.  We then establish parabolic scaling
invariance of tangent measures in Lemma \ref{lem:8.3.2LWYM}.  

\begin{lemma}  \label{tangmeasexist} Given a weak
limit measure $\mu$, $z_0 \in \Sigma$, and $\gl_i \to 0$ there exists a
subsequence $\{ \la_{i_j} \}$ and some nonnegative Radon measure $\mu^*$ on
$\mathbb{R}^{n+1}$ such that $\mathsf{P}_{z_0,\la_{i_j}}(\mu) \to \mu^*$ as weak
convergence of Radon measures on $\mathbb{R}^{n+1}$. 
\begin{proof} We fix some small radius $r_0$ and claim that
\begin{align} \label{tangmeasexist10}
\sup_{(z,r) \in M \times [-1,0] \times (0,r_0)} r^{2-n} \, \mu(P_r(z)) < \infty.
\end{align}
In particular, we use a change of variables and Theorem \ref{thm:8.2.2LWYM} to
yield
\begin{align*}
r^{2-n} \mu(P_r(z)) =&\ r^{2-n} \lim_{i \to \infty} \int_{P_r(z)}
\brs{F_{\N^i}}^2 \, dV \, dt\\
=&\ \lim_{i \to \infty} r^{2-n} \int_{t=0}^{r^2} \int_{S_{\sqrt{t}}}
\brs{F_{\N^i}}^2 \phi \, dV \, dt\\
=&\ \lim_{i \to \infty} r^{2-n} \int_{s=0}^r s \int_{S_{s}} \brs{F_{\N^i}}^2
\phi \, dV \, ds\\
\leq&\ \lim_{i \to \infty} C r^{-2} \int_{s=0}^r s \Phi(s) \, ds\\
\leq&\ \lim_{i \to \infty} C (\Phi(r_0)) r^{-2} \int_{s=0}^r s \, ds\\
\leq&\ C.
\end{align*}
Hence, using (\ref{tangmeasexist10}), for any $\gl_i$ the sequence of dilated
measures $\mathsf{P}_{z_0,\gl_i}(\mu)$ is
uniformly bounded on all Borel sets in $\mathbb R^{n+1}$, hence by the weak
compactness of families of uniformly bounded Radon measures we obtain the
existence of the subsequential limiting measure $\mu$.
\end{proof}
\end{lemma}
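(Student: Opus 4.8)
The plan is to produce $\mu^*$ by the weak-$*$ sequential compactness of uniformly mass-bounded Radon measures, the only substantive input being a single scale-invariant density bound for $\mu$. I would first establish that, for some fixed small $r_0 > 0$,
\begin{equation*}
\Lambda := \sup_{z \in M \times [-1,0],\; 0 < r < r_0} r^{2-n}\, \mu(P_r(z)) < \infty .
\end{equation*}
Granting this, translate the chart so that $z_0 = 0$ as in the earlier proofs; the defining scaling law for the parabolic dilation then gives $\mathsf{P}_{z_0,\la_i}(\mu)(P_R(0)) = \la_i^{2-n}\mu(P_{\la_i R}(0)) = R^{n-2}\,(\la_i R)^{2-n}\mu(P_{\la_i R}(0)) \le \Lambda R^{n-2}$ whenever $\la_i R < r_0$, which holds for all large $i$ since $\la_i \to 0$. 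Thus on each parabolic ball $P_R(0)$ the masses $\mathsf{P}_{z_0,\la_i}(\mu)(P_R(0))$ are uniformly bounded, so weak-$*$ compactness of Radon measures yields a subsequence converging on $P_R(0)$; diagonalizing over $R = 1, 2, 3, \dots$ produces one subsequence $\{\la_{i_j}\}$ and a nonnegative Radon measure $\mu^*$ on $\mathbb{R}^{n+1}$ with $\mathsf{P}_{z_0,\la_{i_j}}(\mu) \to \mu^*$.

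To prove the density bound I would use the entropy monotonicity. Since $\mu$ is the limit of $\{|F_{\N^i_t}|^2\, dV\, dt\}$ in the sense of Radon measures and $P_r(z)$ is open, lower semicontinuity on open sets gives $\mu(P_r(z)) \le \liminf_i \int_{P_r(z)} |F_{\N^i_t}|^2\, dV\, dt$, so it suffices to bound the latter integral by $Cr^{n-2}$ uniformly in $z$ and $i$. For this I would insert the heat kernel $G_{z'}$ based at a point $z'$ sitting a short time to the future of $z$ (e.g.\ $z' = (x,t+2r^2)$): on $P_r(z)$ this kernel is bounded below by $c\, r^{-n}$, while $P_r(z)$ is contained in the cylinder $T_r(z')$ on which $\Psi_{z'}$ is defined, so that $\int_{P_r(z)} |F_{\N^i_t}|^2\, dV\, dt \le c^{-1} r^n \int_{T_r(z')} |F_{\N^i_t}|^2 \phi^2 G_{z'}\, dV\, dt \le C r^{n-2}\, \Psi_{z'}(r; \N^i_t)$. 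By the monotonicity of Theorem \ref{thm:8.2.2LWYM}, $\Psi_{z'}(r;\N^i_t) \le e^{Cr_0}\Psi_{z'}(r_0;\N^i_t) + C r_0\, \YM(\N^i_{-1})$; a crude upper bound $G_{z'} \le C r_0^{-n}$ at scale $r_0$ together with the Yang-Mills energy monotonicity bounds $\Psi_{z'}(r_0;\N^i_t)$ by $C\, \YM(\N^i_{-1})$, and the hypothesis $\YM(\N^i_{-1}) < C$ closes the estimate uniformly in $z$, $z'$, $r$, and $i$. Dividing by $r^{n-2}$ gives $r^{2-n}\mu(P_r(z)) \le C$. (The equivalence $\Phi \sim \Psi$ of Lemma \ref{lem:cute} lets one phrase any of this in terms of $\Phi$ instead.)

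The main obstacle is the heat-kernel bookkeeping in the second paragraph: one must choose the auxiliary basepoint $z'$ and scale so that $P_r(z)$ simultaneously lies in a region where $G_{z'} \gtrsim r^{-n}$ and is contained in a cylinder on which the entropy is defined and monotone, and one must keep every constant independent of the center $z$. On a closed manifold the monotonicity is not scale invariant, which is exactly why the version of the monotonicity formula of \cite{HongTian} retaining the $\YM$ error term (Theorem \ref{thm:8.2.2LWYM}) is needed here rather than a bare monotonicity statement. Everything else --- weak-$*$ compactness of uniformly bounded Radon measures, the diagonal extraction, and the lower-semicontinuity identification $\mu(P_r(z)) \le \liminf_i \int_{P_r(z)} |F_{\N^i_t}|^2$ --- is routine.
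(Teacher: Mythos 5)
Your proposal is correct and follows essentially the same route as the paper: a uniform scale-invariant density bound $r^{2-n}\mu(P_r(z)) \leq C$ derived from the entropy monotonicity of Theorem \ref{thm:8.2.2LWYM}, followed by weak-$*$ compactness of uniformly mass-bounded Radon measures. The only difference is cosmetic --- you bound $\int_{P_r(z)}|F_{\N^i_t}|^2$ by $\Psi_{z'}(r;\N^i_t)$ at a time-shifted basepoint $z'$, whereas the paper slices in time and integrates the $\Phi$ functional over scales --- and your version is, if anything, the more carefully bookkept of the two (in particular your use of lower semicontinuity on open sets in place of the paper's asserted equality).
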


\begin{lemma}\label{tangmeasureconelemma}  For any
$z_0 \in \Sigma$, $0 < r_1 < r_2 < \infty$ a sequence $\gl_i \to 0$ and a
blowup
sequence $\bar{\N}^i_t$ one has
\begin{equation*}
\lim_{i \to \infty} \int_{-r_1^2}^{-r_2^2} \int_{\bRn} \brs{x \hook
F_{\bar{\N}_t^i} + 2t \del_t \bar{\N}_t^i}^2 G_{z_0} \, dx \, dt = 0.
\end{equation*} 

\begin{proof} First recall that as
convergence of Radon measures on $\bRn$ we have
\begin{align*}
\tfrac{1}{2}\brs{{\overline{F_t}^i}}^2\, dV \to \mu_t^* \mbox{ for all } t \in
\mathbb (-\infty,0].
\end{align*}
Hence, for any $R > 0$, applying a change of variables we obtain
\begin{gather} \label{tangmeascone10}
\begin{split}
R^4 \int_{\bRn \times \{- R^2\}} G_{(0,0)} \, d\mu_t^* \, dt
&= \lim_{i \to \infty} \int_{\bRn \times \{- R^2\}}
\tfrac{R^4}{2}\brs{{\overline{F}_t^i}}^2 G_{(0,0)} \, dV_x \, dt \\
&= \lim_{i \to \infty} \int_{\bRn \times \{- R^2\}}  \tfrac{R^4 \la_i^4}{2
}\brs{{F_{t_0 + \la_i^2 t}^i}(x_0 + \la_i x)}^2 G_{(0,0)} \, dV_x \, dt \\
&=\lim_{i \to \infty}\int_{\bRn \times \{t_0- R^2\la_i^2\}}  \tfrac{(\la_i
R)^4}{2 \la_i^2}\brs{{F_{t}^i}}^2 G_{z_0} \, dV_y \, ds\\
&= \lim_{\la_i \to 0} \left[ \left. \int_{\bRn}  (\la_i R)^4 \, \mu_t
\right|_{t = t_0- R^2\la_i^2} \right]\\
&= \Theta\prs{\mu,z_0},
\end{split}
\end{gather}
where the last line follows from Lemma \ref{lem:cute}.
In particular, the $\Phi$ functional is approximately constant in $R$ for the
connections $\bar{\N}^i_t$, and hence using \eqref{Phimono} we obtain the
result.
\end{proof}
\end{lemma}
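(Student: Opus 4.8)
The plan is to recognize the integrand, once one uses that $\bar{\N}^i_t$ solves Yang--Mills flow, as a fixed positive multiple of the monotonicity-defect integrand in \eqref{Phimono}, and then to exploit that the $\Phi$-entropy of a blowup sequence based at $z_0$ stabilizes at the density $\Theta(\mu,z_0)$. Since $\bar{\N}^i_t$ is a blowup sequence it is still a solution of the flow, so $\del_t\bar{\N}^i_t=-D^*_{\bar{\N}^i_t}F_{\bar{\N}^i_t}$, whence
\[
x\hook F_{\bar{\N}^i_t}+2t\,\del_t\bar{\N}^i_t
= x\hook F_{\bar{\N}^i_t}-2t\,D^*_{\bar{\N}^i_t}F_{\bar{\N}^i_t}
= 2t\prs{\tfrac{x}{2t}\hook F_{\bar{\N}^i_t}-D^*_{\bar{\N}^i_t}F_{\bar{\N}^i_t}},
\]
which is $2t$ times the self-similar operator whose vanishing defines a soliton; in particular $\brs{x\hook F_{\bar{\N}^i_t}+2t\,\del_t\bar{\N}^i_t}^2 = 4t^2\,\brs{\tfrac{x}{2t}\hook F_{\bar{\N}^i_t}-D^*_{\bar{\N}^i_t}F_{\bar{\N}^i_t}}^2$.

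Next I apply the $\Phi$-part of Theorem~\ref{thm:8.2.2LWYM} to the pre-blowup solutions based at the relevant centres, with radii $\la_i r_1\le\la_i r_2$ --- legitimate for $i$ large since $\la_i\to0$ --- and rescale each term to the blown-up picture exactly as in the proofs of Lemma~\ref{lem:entropyscaling} and Proposition~\ref{prop:energymonform}, writing $\phi_i$ for the rescaled cutoff (so $\phi_i\to1$ locally uniformly). On the slice $S_r(0)=\bRn\times\{-r^2\}$ one has $\brs{t}=r^2$, so the substitution $t=-r^2$ converts the defect integral $\int_{r_1}^{r_2}r^3\int_{S_r(0)}\brs{t}\,\brs{\tfrac{x}{2t}\hook F_{\bar{\N}^i_t}-D^*_{\bar{\N}^i_t}F_{\bar{\N}^i_t}}^2\phi_i^2 G_0\,dV\,dr$ into a fixed positive multiple $c\,I_i$ of
\[
I_i:=\int_{-r_2^2}^{-r_1^2}\int_{\bRn}\brs{x\hook F_{\bar{\N}^i_t}+2t\,\del_t\bar{\N}^i_t}^2\phi_i^2\,G_0\,dx\,dt
\]
(the endpoints interchanged at no cost, as the integrand is nonnegative). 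Hence \eqref{Phimono} gives, for a universal $c>0$ and a constant $C=C(M,g)$,
\[
\Phi_0\prs{r_1;\bar{\N}^i_t}+c\,I_i\le e^{C\la_i(r_2-r_1)}\,\Phi_0\prs{r_2;\bar{\N}^i_t}+C\la_i(r_2-r_1)\,\YM(\N_0).
\]

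Finally let $i\to\infty$. The weak $H^{1,2}$ hypotheses yield $\tfrac12\brs{F_{\bar{\N}^i_t}}^2\,dV\to\mu^*_t$ as Radon measures on $\bRn$ for each fixed $t<0$; since $G_0(\,\cdot\,,-r^2)$ is smooth and rapidly decaying on that time-slice, which is bounded away from the singular time, the computation \eqref{tangmeascone10} gives $\Phi_0\prs{r;\bar{\N}^i_t}\to r^4\int_{\bRn}G_0\,d\mu^*_{-r^2}=\Theta(\mu,z_0)$ for every fixed $r>0$, the cutoff being harmless because the domains of the $\bar{\N}^i_t$ exhaust $\bRn$. As $\la_i\to0$ and $\YM(\N_0)$ is bounded, the two error terms vanish and $e^{C\la_i(r_2-r_1)}\to1$, so the right-hand side tends to $\Theta(\mu,z_0)$ while the first term on the left tends to the same value; nonnegativity of the integrand then forces $I_i\to0$, which is the assertion (with the harmless $\phi_i^2$ deleted in the limit). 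The step most in need of care is this last passage to the limit: one must justify the slicewise Radon convergence integrated against the Gaussian weight, and check that the cutoff and the error terms of Theorem~\ref{thm:8.2.2LWYM} really become negligible once the blowup domains engulf every fixed ball. The bookkeeping identifying the defect integral with $I_i$ --- in particular the $t=-r^2$ substitution and the sign conventions in the self-similar operator --- is routine but must be carried out carefully.
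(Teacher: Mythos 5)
Your proposal is correct and follows essentially the same route as the paper: you identify the integrand, via the Yang--Mills flow equation, with (a fixed multiple of) the defect term in the $\Phi$-monotonicity formula \eqref{Phimono}, and then observe that the slicewise convergence $\tfrac12\brs{F_{\bar\N^i_t}}^2\,dV\to\mu^*_t$ forces $\Phi_0(r;\bar\N^i_t)\to\Theta(\mu,z_0)$ for every fixed $r$, so that $\Phi$ becomes asymptotically $r$-independent and squeezes the defect integral to zero. The only stylistic difference is that you explicitly apply Theorem~\ref{thm:8.2.2LWYM} to the pre-blowup solutions at radii $\la_i r_1$, $\la_i r_2$ and then rescale (tracking the cutoff $\phi_i\to1$ and the error term $C\la_i(r_2-r_1)\,\YM(\N_0)\to0$), whereas the paper invokes \eqref{Phimono} more tersely; this is the same argument with the bookkeeping written out.
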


For $\Omega \subset \bR^n \times \bR$ we will use $\left. \mu^* \right\lfloor
{\Omega}$ to
denote the restriction of the tangent measure to $\Omega$.

\begin{lemma} \label{lem:8.3.2LWYM} For any $z_0 \in \Sigma$ and $\mu^* \in
T_{z_0}(\mu)$, the quantity $\left. \mu^* \right\lfloor {\mathbb{R}^{n+1}_-}$ is
invariant under all parabolic dilation, i.e.
\begin{equation*}
\mathsf{P}_{\gk}\prs{\left. \mu^* \right\lfloor {\mathbb{R}^{n+1}_-}} = \left.
\mu^* \right\lfloor {\mathbb{R}^{n+1}_-}.
\end{equation*}
\begin{proof} First we observe that
\begin{align*}
\mathsf{P}_{\gk}\prs{\left. \mu^* \right\lfloor {\mathbb{R}^{n+1}_-}}
&= \mathsf{P}_{\gk}\prs{ \left\{ \prs{\mu^*_t,t} \mid t \in (-\infty,0]
\right\} }\\
&= \left\{\prs{ \mathsf{D}_{\gk}(\mu_t^*), \gk^2 t } \mid t \in (-\infty,0]
\right\} \\
&=\left\{\prs{ \mathsf{D}_{\gk}\prs{\mu_{\tfrac{t}{\gk^2}}^*}, \gk^2 t }\mid t
\in
(-\infty,0] \right\}.
\end{align*}
Thus, to prove the lemma it suffices to show that for all $\gk < 0$, for all $t
\in (-\infty,0]$,
\begin{equation*}
\mathsf{D}_{\gk} \prs{\mu^*_{\tfrac{t}{\gk^2}}} = \mu^*_t.
\end{equation*}
Since $\gk$ is arbitrary this is equivalent to demonstrating this at $t=-1$.  To
prove this it suffices to show the result for $\mu_t^*$ multiplied by an
arbitrary smooth positive function.  We will take advantage of this by inserting
a factor of the Greens function $G = G_{(0,0)}$, then multiplying by an
arbitrary compactly supported positive function.  This will allow us to take
advantage of monotonicity formulae to obtain the result.  In particular, we will
show that
\begin{equation}\label{eq:8.27}
\gk^{n-4} \int_{\bRn} \phi(\gk x) G\prs{\gk x, -1} d\mu^*_{-\gk^{-2}} =
\int_{\bRn} \phi(x) G(x,-1) \, d\mu_{-1}^*,
\end{equation} 
for any $\phi \in C_0^1(\bRn)$.
We attain the claim \eqref{eq:8.27} if we can show that
\begin{equation}\label{eq:lambdaderivvanish}
\lim_{i \to \infty} \frac{d}{d \gk} \brk{\frac{\gk^{n-4}}{2}\int_{\bRn}
\phi(\gk x) G\prs{\gk x, -1} \brs{\overline{F}_{- \gk^{-2}}^i}^2 \, dx } =
0.
\end{equation}
For notational simplicity we will remove both the sequence index $i$ and the bar
from the connection. Manipulating
the integrand by applying the change of coordinates $\gk x = y$ yields,
\begin{align*}
\frac{\gk^{n-4}}{2} \int_{\bRn \times \sqg{-1}} \phi(\gk x) & \brs{F_{-\gk^{-2}}
\prs{x}}^2  G(\gk x,-1) \, dx \\
&= \frac{\gk^{n-4}}{2} \int_{\bRn \times \sqg{-1}} \phi(y)
\brs{F_{-\gk^{-2}}\prs{\tfrac{y}{\gk}}}^2 G(y,-1) \, d \prs{\frac{y}{\gk}}\\
&= \frac{\gk^{-4}}{2} \int_{\bRn \times \sqg{-1}} \phi(y)
\brs{F_{-\gk^{-2}}\prs{\tfrac{y}{\gk}}}^2 G(y,-1) \, dy\\
&= \left[ \Phi \prs{\tfrac{1}{\gk};\N_t} \right|_{t=-1}.
\end{align*}
Set $R(\gk) : = \frac{1}{\gk}$. Then by a calculation similar to
\eqref{eq:delRmonotonicity}, where the final term vanishes since the cutoff
function $\phi$ no longer depends on the parameter $R$, we see
that
\begin{align*}
\frac{\del}{\del \gk} \brk{\Phi \prs{\tfrac{1}{\gk};\N_t}}
&= \frac{-1}{\gk^2} \frac{\del}{\del R} \brk{\Phi \prs{R(\gk);\N_t}}\\
&= \frac{-1}{\gk^5} \int_{S_{\gk^{-1}}}
\brs{t}\brs{\frac{x}{t}\hook F - 2 D^* F}^2 \phi G \, dx + \frac{4}{\gk^5}
\int_{S_{\gk^{-1}}} \ip{ \prs{x \hook F - 2 t \prs{D^* F} }, \N \phi \hook F } 
G \, dx
\end{align*}
Taking the limit as $i \to \infty$, we have that the first quantity vanishes by
Lemma \ref{tangmeasureconelemma}. For the second we apply weighted H\"{o}lder's
inequality for an arbitrary $\ge > 0$,
\begin{align*}
\frac{1}{\gk^5}& \int_{S_{\gk^{-1}}} \ip{ \prs{x \hook F - 2 t \prs{D^* F} }, \N
\phi \hook F }  G \, dx\\
&\leq \frac{C}{\ge \gk^5} \int_{S_{\gk^{-1}}} \brs{ \prs{x \hook F - 2 t
\prs{D^* F} } }^2 G \, dx  + \frac{\ge}{\gk^5} \int_{S_{\gk^{-1}}} \brs{ \N
\phi}^2 \brs{F }^2  G \, dx.
\end{align*}
The first factor vanishes with another application of
Lemma \ref{tangmeasureconelemma}.  The integrand of the second term is bounded
by the monotonicity of $\Phi$, using an argument similar to
(\ref{tangmeascone10}).  Sending $\ge \to 0$ therefore yields
\eqref{eq:lambdaderivvanish}. The result follows.
\end{proof}
\end{lemma}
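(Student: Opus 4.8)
The plan is to exploit that a tangent measure $\mu^{*}$ carries no extra ``room'' in the past direction: its localized Gaussian mass centered at $z_{0}$ equals the fixed number $\Theta(\mu,z_{0})$ at every parabolic scale, and converting this scale-rigidity into actual invariance of the time slices of $\mu^{*}$ is the content of the lemma. First I would unwind the definitions: writing $\mu^{*}=\{(\mu^{*}_{t},t)\}_{t\le 0}$ and using the scaling law $\mathsf{P}_{\gk}(\nu)(A)=\gk^{2-n}\nu(\mathsf{P}_{\gk}A)$, one sees that the time slice of $\mathsf{P}_{\gk}(\mu^{*}\lfloor\bR^{n+1}_{-})$ at time $t$ is $\mathsf{D}_{\gk}(\mu^{*}_{t/\gk^{2}})$, so the claim is equivalent to $\mathsf{D}_{\gk}(\mu^{*}_{t/\gk^{2}})=\mu^{*}_{t}$ for all $\gk>0$ and all $t\le 0$; since $\mathsf{D}_{a}\mathsf{D}_{b}=\mathsf{D}_{ab}$ it suffices to treat $t=-1$. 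To verify this identity of Radon measures on $\bRn$ I would pair both sides against test functions $\phi\,G_{(0,0)}(\cdot,-1)$ with $\phi\in C^{1}_{0}(\bRn)$; these determine the measure because $G_{(0,0)}(\cdot,-1)$ is smooth and strictly positive, and the Gaussian weight is inserted precisely so that the monotonicity formulas of Section~\ref{ss:entropmon} become available. Thus the goal reduces to
\[
\gk^{n-4}\int_{\bRn}\phi(\gk x)\,G_{(0,0)}(\gk x,-1)\,d\mu^{*}_{-\gk^{-2}}(x)=\int_{\bRn}\phi(x)\,G_{(0,0)}(x,-1)\,d\mu^{*}_{-1}(x).
\]

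Next I would realize $\mu^{*}$ through a blowup sequence. By Lemma~\ref{tangmeasexist}, fix $\gl_{i}\to 0$ and a corresponding blowup sequence $\bar{\N}^{i}_{t}$, which solves Yang-Mills flow and satisfies $\tfrac12|\bar{F}^{i}_{t}|^{2}\,dV\to\mu^{*}_{t}$ for every $t\le 0$. Then the left side above is $\lim_{i\to\infty}$ of $\tfrac{\gk^{n-4}}{2}\int\phi(\gk x)G_{(0,0)}(\gk x,-1)|\bar{F}^{i}_{-\gk^{-2}}|^{2}\,dx$, and it is enough to show that the $\gk$-derivative of this quantity tends to $0$. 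After the substitution $y=\gk x$ the bracketed integral is exactly $\Phi_{(0,0)}(\gk^{-1};\bar{\N}^{i}_{t})$ evaluated at $t=-1$, now with a cutoff that is a \emph{fixed} function of the integration variable rather than a function of $\gk$. Differentiating via the chain rule together with \eqref{eq:delRmonotonicity}---the cutoff-derivative term occurring there is absent here, precisely because the cutoff is independent of $\gk$---leaves two terms: a soliton-defect term proportional to $\int_{S_{\gk^{-1}}}|t|\,|\tfrac{x}{t}\hook\bar{F}^{i}-2D^{*}\bar{F}^{i}|^{2}\phi\,G\,dx$ and a cross term proportional to $\int_{S_{\gk^{-1}}}\ip{x\hook\bar{F}^{i}-2tD^{*}\bar{F}^{i},\,\N\phi\hook\bar{F}^{i}}G\,dx$.

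To finish, I would kill both terms as $i\to\infty$. Using the flow equation $\del_{t}\bar{\N}^{i}=-D^{*}\bar{F}^{i}$, on the slice $t=-\gk^{-2}$ the first integrand equals $|t|^{-1}|x\hook\bar{F}^{i}+2t\del_{t}\bar{\N}^{i}|^{2}\phi\,G$, so that term is controlled by $\int_{\bRn}|x\hook\bar{F}^{i}+2t\del_{t}\bar{\N}^{i}|^{2}G_{(0,0)}\,dx$, which tends to $0$ by Lemma~\ref{tangmeasureconelemma}. For the cross term, weighted Cauchy--Schwarz bounds it by $C\ge^{-1}$ times the same soliton-defect quantity (again $\to 0$) plus $\ge$ times $\int_{S_{\gk^{-1}}}|\N\phi|^{2}|\bar{F}^{i}|^{2}G\,dx$, and this last integral is bounded uniformly in $i$ by the monotonicity of $\Phi$, exactly as in \eqref{tangmeascone10}; sending $\ge\to 0$ then kills it. This yields the displayed identity, hence the lemma. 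I expect the main obstacle to be the bookkeeping in passing $\lim_{i\to\infty}$ through the $\gk$-differentiation: one cleanly proves only the integrated form $\Phi_{(0,0)}(\gk_{2}^{-1};\bar{\N}^{i})-\Phi_{(0,0)}(\gk_{1}^{-1};\bar{\N}^{i})\to 0$, and must argue---using the monotonicity of $\Phi$ and the weak convergence $\tfrac12|\bar{F}^{i}_{t}|^{2}dV\to\mu^{*}_{t}$---that this upgrades to constancy of $\gk\mapsto\gk^{n-4}\int\phi(\gk\cdot)G(\gk\cdot,-1)\,d\mu^{*}_{-\gk^{-2}}$; the delicate point is that Lemma~\ref{tangmeasureconelemma} supplies only a time-averaged vanishing of the soliton defect, which has to be leveraged against the monotone, slice-wise $\Phi$-quantities above.
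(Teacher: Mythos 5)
Your proposal reproduces the paper's argument step for step: reduce to the slice identity $\mathsf{D}_{\gk}(\mu^{*}_{-\gk^{-2}})=\mu^{*}_{-1}$, pair against $\phi\,G_{(0,0)}(\cdot,-1)$, realize $\mu^{*}$ by a blowup sequence, change variables to identify the test integral with $\Phi(1/\gk;\bar{\N}^{i}_{t})|_{t=-1}$, differentiate in $\gk$ noting the cutoff-derivative term of \eqref{eq:delRmonotonicity} drops, and kill the soliton-defect and cross terms via Lemma~\ref{tangmeasureconelemma} together with a weighted Cauchy--Schwarz and $\Phi$-monotonicity. The subtlety you flag at the end is real and is glossed over in the paper too: Lemma~\ref{tangmeasureconelemma} controls the soliton defect only after integrating in time, whereas the $\gk$-derivative of $\Phi$ lives on a single time slice; the correct packaging is the integrated identity $\Phi(\gk_{2}^{-1};\bar{\N}^{i})-\Phi(\gk_{1}^{-1};\bar{\N}^{i})=\int_{\gk_{1}}^{\gk_{2}}\partial_{\gk}\Phi\,d\gk$, whose right-hand side is exactly the time-averaged quantity that Lemma~\ref{tangmeasureconelemma} (plus the uniformly bounded cross term) forces to zero, after which the weak convergence $\tfrac12|\bar{F}^{i}_{t}|^{2}\,dV\to\mu^{*}_{t}$ transfers the constancy to the limit.
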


\subsection{Stratification of tangent measures}

\begin{lemma}\label{lem:8.3.3LWYM}  For $z_0 \in
\Sigma$ and $\mu^* \in T_{z_0}(\mu)$, the following hold.
\begin{enumerate}
\item For all $z \in \mathbb{R}^{n+1}$, $\Theta(\mu^*,z) \leq
\Theta(\mu^*,0)$.
\item If $z \in \mathbb{R}^{n+1}$ satisfies $\Theta\prs{\mu^*,z} =
\Theta\prs{\mu^*,0}$, then for all $\la >0$ and $v \in
\mathbb{R}^{n+1}_{-}$,
\begin{equation*}
\Theta\prs{\mu^*, z+v} = \Theta\prs{\mu^*, z + \mathsf{P}_{\la}v}.
\end{equation*}
\end{enumerate}
\begin{proof} For $\mu^* \in T_{z_0}(\mu)$, there exists some sequence $r_i
\to 0$ such that $\mathsf{P}_{z_0,r_i}(\mu) \to \mu^*$.  We first observe how
the rescaling law for $\Phi$ is reflected in the definition of $\Theta$.  In
particular, since we are integrating over a space slice we apply the scaling law
for $\mathsf{D}_{\gl}$ and change variables to yield
\begin{align*}
 \Theta(\mathsf{P}_{\gl}(\mu),z,r) =&\ \frac{r^4}{2} \int_{S_r} G_z
\mathsf{P}_{\gl}(\mu)\\
 =&\ \frac{r^4}{2} \int_{S_r} \left( \gl^n \mathsf{P}_{\gl}^*
G_{\mathsf{P}_{\gl}(z)} \right) \left( \gl^{4-n} \mathsf{P}_{\gl}^* \mu
\right)\\
 =&\ \frac{(\gl r)^4}{2} \int_{\mathsf{P}_{\gl}(S_r)} G_{\mathsf{P}_{\gl}(z)}
\mu\\
 =&\ \Theta(\mu,\mathsf{P}_{\gl}(z), \gl r).
\end{align*}
Using this, for any $r>0$, and
$z = \prs{x,t} \in \bR^{n+1}$,
\begin{align}
\begin{split}\label{eq:Thetanmu0ineq}
\Theta\prs{\mu^*,z}
&\leq \Theta\prs{\mu^*,z,r}\\
&= \lim_{r_i \to 0} \Theta \prs{ \mathsf{P}_{z_0,r_i}(\mu),z,r} \\
&= \lim_{r_i \to 0} \Theta\prs{\mu,z_0 + \prs{r_i x,r_i^2 t}, r_i r} \\
&\leq \Theta\prs{\mu,z_0}\\
&=\Theta\prs{\mu^*,0},
\end{split}
\end{align}
where we have applied the upper semicontinuity of $\Theta(\mu,\cdot,\cdot)$
with respect to the last two variables. Thus claim (1) follows.

To prove claim (2), observe that the hypothesis $\Theta(\mu^*,z) =
\Theta(\mu^*,0)$ implies that the inequalities of \eqref{eq:Thetanmu0ineq} are
equalities.  This implies that $\Theta\prs{\mu^*,z,r}=\Theta(\mu,z_0)$,
namely, it is constant with respect to $r$. By an argument similar to that of
Lemma \ref{lem:8.3.2LWYM}, we have that $\Theta\prs{\mu^*, z+v} =
\Theta\prs{z+\mathsf{P}_{\la}(v)}$ for any $v \in \mathbb{R}^{n+1}_-$ and $\la
>0$. The result follows.
\end{proof}
\end{lemma}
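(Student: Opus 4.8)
The plan is to leverage three facts: the $r$-monotonicity of the Gaussian density $\Theta(\mu^*,z,r)$; an exact rescaling identity for $\Theta$ under parabolic dilations; and the rigidity statement $\Theta(\mu^*,0,r)\equiv\Theta(\mu,z_0)$ for every $r>0$, which is essentially the content of Lemma \ref{tangmeasureconelemma} (see \eqref{tangmeascone10}). First I would record the scaling law: matching the homogeneity of $\mathsf{P}_{z_0,\gl}(\mu)$ against that of the weight $G$ and the time slice $S_r$, a direct change of variables gives, for $\gl>0$,
\begin{equation*}
\Theta\prs{\mathsf{P}_{z_0,\gl}(\mu),z,r}=\Theta\prs{\mu,\mathsf{P}_{z_0,\gl}(z),\gl r}.
\end{equation*}

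For claim (1), choose $r_i\to0$ with $\mathsf{P}_{z_0,r_i}(\mu)\to\mu^*$. For fixed $r>0$ and $z=(x,t)\in\mathbb{R}^{n+1}$, weak convergence together with the scaling law gives
\begin{equation*}
\Theta(\mu^*,z,r)=\lim_{i\to\infty}\Theta\prs{\mathsf{P}_{z_0,r_i}(\mu),z,r}=\lim_{i\to\infty}\Theta\prs{\mu,\ z_0+(r_ix,r_i^2t),\ r_ir}.
\end{equation*}
Since the centers converge to $z_0$ while the radii $r_ir\to0$, the $r$-monotonicity of $\Theta(\mu,w,\cdot)$ together with the upper semicontinuity recorded in Lemma \ref{lem:8.2.5LWYM} bounds the right-hand side by $\Theta(\mu,z_0)$; and $\Theta(\mu,z_0)=\Theta(\mu^*,0,\rho)$ for every $\rho>0$ by Lemma \ref{tangmeasureconelemma}, hence equals $\Theta(\mu^*,0)$. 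Thus $\Theta(\mu^*,z,r)\le\Theta(\mu^*,0)$ for every $r>0$, and letting $r\to0$ proves claim (1).

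For claim (2), the hypothesis $\Theta(\mu^*,z)=\Theta(\mu^*,0)$ forces every inequality in the chain just displayed to be an equality; in particular $\Theta(\mu^*,z,r)\equiv\Theta(\mu,z_0)$, i.e.\ the Gaussian density centered at $z=(x_z,t_z)$ is \emph{constant in} $r$. This $r$-constancy is precisely the input that drives the proof of Lemma \ref{lem:8.3.2LWYM}: re-running that argument with the base point $0$ replaced by $z$, the constancy of the density along the backward trajectory from $z$ forces the soliton residual $\int\brs{(x-x_z)\hook F-2(t-t_z)D^*F}^2G_z$ from the monotonicity formula of Theorem \ref{thm:8.2.2LWYM} to vanish in the blowup limit (the analog of Lemma \ref{tangmeasureconelemma} centered at $z$), whence $\mu^*\lfloor(z+\mathbb{R}^{n+1}_-)$ is invariant under all parabolic dilations $\mathsf{P}_{z,\gl}$, $\gl>0$. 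Transporting this measure invariance back to the density via the scaling law, $\Theta(\mu^*,w)=\Theta\prs{\mathsf{P}_{z,\gl}(\mu^*),\mathsf{P}_{z,\gl}w}=\Theta(\mu^*,\mathsf{P}_{z,\gl}w)$ for $w$ in the backward cone from $z$, and writing $w=z+v$ with $v\in\mathbb{R}^{n+1}_-$ so that $\mathsf{P}_{z,\gl}w=z+\mathsf{P}_{\gl}v$, yields $\Theta(\mu^*,z+v)=\Theta(\mu^*,z+\mathsf{P}_{\gl}v)$.

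The main obstacle is claim (2): making ``the argument similar to Lemma \ref{lem:8.3.2LWYM}'' precise with a general center $z$ rather than the blowup base point. One must extract a blowup sequence $\{\bar{\N}^i_t\}$ whose rescaled curvature densities converge to $\mu^*$, convert the $r$-constancy of the density at $z$ into vanishing of the \emph{centered} soliton residual (using claim (1) and its equality case to see that the $\Phi$-functional centered at $z$ is asymptotically constant), and take care that the invariance obtained holds only on the backward cone $z+\mathbb{R}^{n+1}_-$, the Yang-Mills monotonicity being one-sided in time. The remaining steps are routine change-of-variables and semicontinuity bookkeeping.
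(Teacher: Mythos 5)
Your proposal is correct and follows essentially the same route as the paper: the same scaling identity $\Theta(\mathsf{P}_{z_0,\gl}(\mu),z,r)=\Theta(\mu,\mathsf{P}_{z_0,\gl}(z),\gl r)$ and chain of inequalities (monotonicity, weak convergence, upper semicontinuity) for claim (1), and for claim (2) the equality case yielding $r$-constancy of $\Theta(\mu^*,z,\cdot)$, which is then fed into a rerun of the dilation-invariance argument of Lemma \ref{lem:8.3.2LWYM} centered at $z$. Your elaboration of the "argument similar to Lemma \ref{lem:8.3.2LWYM}" step, including the one-sidedness in time restricting the conclusion to $v\in\mathbb{R}^{n+1}_-$, is exactly what the paper's terse citation intends.
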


\begin{prop} \label{prop:8.3.4LWYM}  For $z_0 \in
\Sigma$ and $\mu^* \in T_{z_0}(\mu)$,
\begin{equation*}
V\prs{\Theta \prs{\mu^*, \cdot}} = W \prs{\Theta \prs{\mu^*, \cdot}}.
\end{equation*}
In particular, both $V\prs{\Theta \prs{\mu^*, \cdot}}$ and $W\prs{\Theta
\prs{\mu^*, \cdot}}$ are linear subspaces of $\bR^n$. Moreover,
$U\prs{\Theta\prs{\mu^*,\cdot}}$ is either $V\prs{\Theta\prs{\mu^*,\cdot}}$,
or $V\prs{\Theta\prs{\mu^*,\cdot}} \times (-\infty,a]$ for some $0 \leq a \leq
\infty$ and $\Theta\prs{\mu^*,\cdot}$ is time-independent on $(-\infty,a]$.

\begin{proof}
First we show that $W(\Theta(\mu^*, \cdot)) \subset V(\Theta(\mu^*,\cdot))$.
 Fix $(x,0) \in W\prs{\Theta \prs{\mu^*, \cdot}}$. Since the second component
is identically zero it suffices to verify that $(x,0) \in
U\prs{\Theta\prs{\mu^*,\cdot}}$. Note that by definition of $W\prs{\Theta
\prs{\mu^*, \cdot}}$, choosing $y = -x$ as in its definition,
\begin{align*}
\Theta\prs{\mu^*,(x,0)}
&= \Theta\prs{\mu^*,(x - x,0)}= \Theta\prs{\mu^*,0}.
\end{align*}
It follows that $W\prs{\Theta \prs{\mu^*, \cdot}} \subset V\prs{\Theta
\prs{\mu^*, \cdot}}$.

Now we show the containment $V\prs{\Theta \prs{\mu^*, \cdot}} \subset
W\prs{\Theta \prs{\mu^*, \cdot}}$.  First note that $V\prs{\Theta
\prs{\mu^*, \cdot}}$ is closed under scalar multiplications from Lemma
\ref{lem:8.3.2LWYM}.  Next, for any nonzero $x \in
V\prs{\Theta\prs{\mu^*,\cdot}}$ we have that for all $\la > 0$ and all $v \in
\bR^{n+1}_{-}$, by applying Lemma \ref{lem:8.3.3LWYM} (2), and using the
parabolic scaling invariance of $\Theta$ from Lemma \ref{lem:8.3.2LWYM},
\begin{align}
\begin{split}\label{eq:8.33LW}
\Theta \prs{\mu^*, \prs{x,0} + v}
&= \Theta \prs{\mu^*, \prs{x,0}+\mathsf{P}_{\la} v}\\
&=\Theta\prs{\mu^*,\mathsf{P}_{\la^{-1}}\prs{\prs{x,0} + \mathsf{P}_{\la} v}}\\
&=\Theta\prs{\mu^*,\mathsf{P}_{\la^{-1}}\prs{x,0} + v}.
\end{split}
\end{align}
By the upper semicontinuity of $\Theta$, sending $\gl \to \infty$ yields
\begin{align*}
\Theta(\mu^*, (x,0) + v) \leq \Theta(\mu^*, v).
\end{align*}
On the other hand, since $v - \mathsf{P}_{\la^{-1}}\prs{x,0} \in \bR^{n+1}_-$,
we can replace $v
\mapsto v - \mathsf{P}_{\la^{-1}}\prs{x,0}$ throughout the equalities in
\eqref{eq:8.33LW} and obtain that
\begin{equation*}
\Theta\prs{\mu^*,\prs{x,0} + v - \mathsf{P}_{\la^{-1}}\prs{x,0}} =
\Theta\prs{\mu^*, v}.
\end{equation*}
Again sending $\gl \to \infty$ and utilizing the upper semicontinuity of
$\Theta\prs{\mu^*,\cdot}$ yields
\begin{align*}
\Theta(\mu^*, (x,0) + v) \geq \Theta(\mu^*, v).
\end{align*}
Hence we have $\Theta\prs{\mu^*, v} = \Theta \prs{\mu^*, \prs{x,0} + v}$, and so
we conclude $V\prs{\Theta\prs{\mu^*,\cdot}} \subset
W\prs{\Theta\prs{\mu^*,\cdot}}$ so that $V\prs{\Theta\prs{\mu^*,\cdot}} =
W\prs{\Theta\prs{\mu^*,\cdot}} $.

Note that by definition of $W\prs{\Theta\prs{\mu^*,\cdot}} $ we have that it
is closed under linear combinations since for all $\prs{x,0}$, $\prs{v,0}$ in
$W\prs{\Theta\prs{\mu^*,\cdot}}$ we have that for all $\prs{y,s} \in
\bR_-^{n+1}$, just iterating its definition twice
\begin{align*}
\Theta \prs{\mu^*,  \prs{\prs{x+v} + y, s}}
&= \Theta \prs{\mu^*,  \prs{x + y, s}} \\
&= \Theta \prs{\mu^*,  \prs{y, s}}.
\end{align*}
Therefore by equality of $V\prs{\Theta\prs{\mu^*,\cdot}}$ to
$W\prs{\Theta\prs{\mu^*,\cdot}}$, with the combined scaling invariance and
linear combinations invariance both are linear subspaces of $\bRn$.

Now we prove the remaining statement of the proposition concerning the structure
of $U\prs{\Theta\prs{\mu^*,\cdot}}$. 
 Suppose that $z := \prs{x,t} \in U\prs{\Theta\prs{\mu^*,\cdot}}$ with $t <
0$. Then for all $w := \prs{y,s} \in \bR^{n+1}$ with $s \leq t$ and for all $\la
> 0$, using Lemma \ref{lem:8.3.3LWYM} (b)
\begin{align}
\begin{split}\label{eq:8.3.4LWYMc}
\Theta \prs{\mu^*, \mathsf{P}_{\la^{-1}}(w)}
&= \Theta \prs{\mu^*, w}\\
&= \Theta \prs{\mu^*, z + w - z}\\
&= \Theta \prs{\mu^*, z + \mathsf{P}_{\la^{-1}}(w-z)}.
\end{split}
\end{align}
In particular, take $\la \in \prs{0,1}$, and note that consequently
$\tfrac{s}{\la^{2}} \leq s \leq t$. So taking \eqref{eq:8.3.4LWYMc} and
replacing $w \mapsto \mathsf{P}_{\la}\prs{w}$ in  yields
\begin{equation}\label{eq:8.3.4LWYMb}
\Theta \prs{\mu^*,w} :=\Theta \prs{\mu^*, z+w-\mathsf{P}_{\la^{-1}}(z)}.
\end{equation}
Taking $\la \to 0$, we see that $\Theta\prs{\mu^*,w} \leq
\Theta\prs{\mu^*,z+w}$. Taking \eqref{eq:8.3.4LWYMb} again and instead
replacing $w \mapsto w + \mathsf{P}_{\la^{-1}}(z)$, we conclude that
\begin{equation*}
\Theta\prs{\mu^*,w+\mathsf{P}_{\la^{-1}}\prs{z}} = \Theta\prs{\mu^*,z+w}.
\end{equation*}
Again sending $\la \to 0$ we obtain that
\begin{equation*}
\Theta\prs{\mu^*,z} \leq \Theta\prs{\mu^*,z+w}.
\end{equation*}
Therefore, we conclude that for any $z := \prs{x,t} \in
U\prs{\Theta\prs{\mu^*,\cdot}}$ with $t < 0$, for all $w :=
\prs{y,s}$ with $s \leq t$,
\begin{equation}\label{eq:8.3.4LWYMa}
\Theta\prs{\mu^*,w} = \Theta\prs{\mu^*,z+w}.
\end{equation}
Then choosing $w \equiv z$, iterating \eqref{eq:8.3.4LWYMa}, applying the
parabolic
scaling invariance of $\Theta$ from Lemma \ref{lem:8.3.2LWYM}, and the upper
semicontinuity of $\Theta \prs{\mu^*, \cdot }$, one has
\begin{align*}
\Theta\prs{\mu^*,0} &=\ \Theta(\mu^*, z) = \Theta(\mu^*, z + z) = \dots =
\Theta(\mu^*, mz)\\
&=\Theta \prs{\mu^*, \prs{mx,mt}}\\
&= \Theta \prs{\mu^*, \mathsf{P}_{\frac{1}{m}}\prs{mx,mt}}\\
&= \Theta \prs{\mu^*, \prs{x,\tfrac{t}{m}}}\\
&\leq \Theta \prs{\mu^*, \prs{x,0}}.
\end{align*}
Combining this with Lemma \ref{lem:8.3.3LWYM} (1) we conclude that $\prs{x,0}
\in V\prs{\Theta\prs{\mu^*,\cdot}} = W\prs{\Theta\prs{\mu^*,\cdot}}$.
Therefore
\begin{align*}
\Theta \prs{ \mu^*, \prs{0,t}} = \Theta(\mu^*, (x,0) + (0,t)) =
\Theta(\mu^*,0)
\end{align*}
It follows that $\prs{0,t}\in U\prs{\Theta\prs{\mu^*,\cdot}}$.  It follows
that $\Theta\prs{\mu^*,\cdot}$ is actually time independent for $t \leq 0$.
Therefore for all $t \leq 0$,
\begin{equation*}
V \prs{\Theta \prs{\mu^*,\cdot}} := U \prs{\Theta \prs{\mu^* , \cdot }} \cap
\prs{\bR^n \times \sqg{t}}.
\end{equation*}
Lastly, if $z = (x,t) \in U \prs{\Theta\prs{\mu^*,\cdot}}$ with $t > 0$, then we
can repeat the argument above to show that
$\Theta\prs{\mu^*,\cdot}$ is time-independent up to $t$. We set $a$ to be the
value of the maximal time $t \geq 0$ for which this time independence exists on.
Then we have $U \prs{\Theta\prs{\mu^*,\cdot}} = V\prs{\Theta
\prs{\mu^*,\cdot}} \times (-\infty, a]$, which concludes the proof.
\end{proof}
\end{prop}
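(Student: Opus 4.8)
The plan is to run the entire argument off three facts already established: the parabolic dilation invariance of $\left. \mu^* \right\lfloor \mathbb{R}^{n+1}_-$ from Lemma \ref{lem:8.3.2LWYM}, which at the level of densities says $\Theta\prs{\mu^*,\mathsf{P}_\la z} = \Theta\prs{\mu^*,z}$ whenever $z$ and $\mathsf{P}_\la z$ both lie in $\mathbb{R}^{n+1}_-$; the bound $\Theta\prs{\mu^*,z}\leq\Theta\prs{\mu^*,0}$ together with the maximal-density propagation identity of Lemma \ref{lem:8.3.3LWYM}(2); and the upper semicontinuity of $\Theta\prs{\mu^*,\cdot}$. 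For brevity write $U,V,W$ for $U\prs{\Theta\prs{\mu^*,\cdot}}$, $V\prs{\Theta\prs{\mu^*,\cdot}}$, $W\prs{\Theta\prs{\mu^*,\cdot}}$. The inclusion $W\subset V$ is immediate: inserting the admissible base point $\prs{0,0}\in\mathbb{R}^{n+1}_-$ into the defining identity of $W$ shows any $\prs{x,0}\in W$ satisfies $\Theta\prs{\mu^*,\prs{x,0}} = \Theta\prs{\mu^*,0}$, so $\prs{x,0}\in U\cap\prs{\bR^n\times\{0\}} = V$.

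For the reverse inclusion I would fix a nonzero $x\in V$ and an arbitrary $v\in\mathbb{R}^{n+1}_-$. Applying Lemma \ref{lem:8.3.3LWYM}(2) at the maximal-density point $\prs{x,0}$, and then pushing the resulting point through the dilation invariance of Lemma \ref{lem:8.3.2LWYM}, gives $\Theta\prs{\mu^*,\prs{x,0}+v} = \Theta\prs{\mu^*,\prs{\la^{-1}x,0}+v}$ for every $\la>0$; letting $\la\to\infty$ and using upper semicontinuity yields $\Theta\prs{\mu^*,\prs{x,0}+v}\leq\Theta\prs{\mu^*,v}$. For the opposite inequality I would rerun the identity with $v$ replaced by the still-admissible point $v-\prs{\la^{-1}x,0}$, obtaining $\Theta\prs{\mu^*,\prs{\prs{1-\la^{-1}}x,0}+v} = \Theta\prs{\mu^*,v}$, and again send $\la\to\infty$, producing $\Theta\prs{\mu^*,v}\leq\Theta\prs{\mu^*,\prs{x,0}+v}$. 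Hence $\Theta\prs{\mu^*,\prs{x,0}+v} = \Theta\prs{\mu^*,v}$ for all $v\in\mathbb{R}^{n+1}_-$, i.e. $\prs{x,0}\in W$, so $V=W$. That this common set is a linear subspace of $\bR^n$ is then formal: $W$ is closed under addition and negation by iterating and substituting in its defining identity, and closed under positive scalings because parabolic dilation preserves $\left. \mu^* \right\lfloor \mathbb{R}^{n+1}_-$.

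It remains to identify $U$. Suppose $z=\prs{x,t}\in U$ with $t<0$. For any $w$ whose time coordinate is $\leq t$ one has $w\in\mathbb{R}^{n+1}_-$ and $w-z\in\mathbb{R}^{n+1}_-$, so combining Lemma \ref{lem:8.3.3LWYM}(2) at $z$ with the dilation invariance of Lemma \ref{lem:8.3.2LWYM} and then squeezing between the two estimates obtained by sending the dilation parameter to its limits gives $\Theta\prs{\mu^*,w} = \Theta\prs{\mu^*,z+w}$ for every such $w$. Iterating with $w=z,2z,3z,\dots$ gives $\Theta\prs{\mu^*,mz} = \Theta\prs{\mu^*,0}$ for all $m\geq1$; since $\mathsf{P}_{1/m}$ carries $mz$ to $\prs{x,t/m}\in\mathbb{R}^{n+1}_-$, dilation invariance and upper semicontinuity force $\Theta\prs{\mu^*,\prs{x,0}} = \Theta\prs{\mu^*,0}$, i.e. $x\in V=W$. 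Translating by $\prs{x,0}$ and then dilating in time shows $\prs{0,t'}\in U$ for every $t'\leq0$, hence $U\cap\prs{\bR^n\times\{t'\}} = V$ for all $t'\leq0$. If $U$ also contains a point at some positive time, the same propagation argument extends this time-independence up to a maximal $a\in(0,\infty]$ and gives $U=V\times(-\infty,a]$; otherwise $U=V$. The delicate point throughout---and where I expect the main difficulty to lie, most acutely in this last step---is that upper semicontinuity controls only limits from above, so each equality must be realized as a two-sided squeeze, with the dilation parameter sent to its limit in the correct direction after the correct shift.
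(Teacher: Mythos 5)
Your proposal is correct and follows essentially the same route as the paper: the inclusion $W\subset V$ by evaluating the defining identity of $W$ at a convenient base point, the inclusion $V\subset W$ by combining Lemma \ref{lem:8.3.3LWYM}(2) with the dilation invariance of Lemma \ref{lem:8.3.2LWYM} and squeezing via upper semicontinuity in both directions (the second via the shift $v\mapsto v-\mathsf{P}_{\la^{-1}}(x,0)$), and the structure of $U$ by the same translation--iteration--rescaling argument. The only cosmetic difference is your choice of base point $(0,0)$ rather than $(-x,0)$ in the first step, which if anything reads more cleanly.
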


We can now establish Theorem \ref{dimredthm}, which we restate for convenience.

\begin{thm*} 
For $0 \leq k \leq n-2$ let
\begin{equation*}
\Sigma_k = \left\{ z_0 \in \Sigma \mid \dim \prs{ \Theta \prs{\mu^*, \cdot}}
\leq
k, \forall \mu^* \in T_{z_0}(\mu) \right\}.
\end{equation*}
Then $\dim_{\mathcal{P}}\prs{\Sigma_k} \leq k$ and $\Sigma_0$ is countable.
\begin{proof}[Proof of Theorem \ref{dimredthm}] This is a direct consequence of
(\cite{White} Theorem 8.2).  To connect directly to the notation of that paper,
the function $f$ is given by the density function.  Hypothesis (1), the
subsequential compactness of blowup limits, is established in Lemma
\ref{tangmeasexist}.   Hypothesis (2) is clear from the construction of blowup
limits.  Hypothesis (3), the parabolic scaling invariance of the limit
functions, is established in Lemma \ref{lem:8.3.2LWYM}.  The theorem thus
applies to give the claimed statement.
\end{proof}
\end{thm*}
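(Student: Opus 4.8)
The plan is to deduce the theorem from the abstract stratification / dimension-reduction machinery of White \cite{White} (Theorem 8.2), by exhibiting the density function $\Theta(\mu^*,\cdot)$ on tangent measures of $\mu$ as an admissible ``function'' in White's sense. That framework applies to a function defined on a class of objects closed under a semigroup of homotheties, provided one has: (1) subsequential compactness of the associated blowup (tangent) objects; (2) compatibility of the function with the underlying translation/dilation structure; and (3) parabolic-scaling invariance of every blowup limit. Its output is precisely that the stratum on which all tangent objects have a dilation-invariant ``spine'' of dimension at most $k$ has parabolic Hausdorff dimension at most $k$, with the bottom stratum countable. So the task reduces to checking that our density fits this template.

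I would assemble the three inputs as follows. For (1): given $z_0 \in \Sigma$ and $\la_i \to 0$, the uniform density bound $\sup_{z,r} r^{2-n}\mu(P_r(z)) < \infty$ — a consequence of the monotonicity of $\Phi$ via Theorem \ref{thm:8.2.2LWYM}, as in Lemma \ref{tangmeasexist} — makes the rescaled measures $\mathsf{P}_{z_0,\la_i}(\mu)$ uniformly bounded on compact subsets of $\mathbb{R}^{n+1}$, so weak-$*$ compactness of Radon measures yields a subsequential tangent measure $\mu^*$. Input (2) is essentially formal: $\Theta(\mu^*,z)$ is the monotone limit of a localized Gaussian-weighted energy, and its behavior under dilation and under the convergence $\mathsf{P}_{z_0,r_i}(\mu)\to\mu^*$ is governed by the scaling identity $\Theta(\mathsf{P}_\la(\mu),z,r) = \Theta(\mu,\mathsf{P}_\la(z),\la r)$ together with upper semicontinuity. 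Input (3) is the substance of Lemma \ref{lem:8.3.2LWYM}: the restriction $\mu^*\lfloor\mathbb{R}_-^{n+1}$ is invariant under every parabolic dilation; combining this with Lemma \ref{lem:8.3.3LWYM} and Proposition \ref{prop:8.3.4LWYM} shows the maximal-density set $U(\Theta(\mu^*))$ carries a genuine linear spine $V = W$, and equals either $V$ or $V\times(-\infty,a]$, so that the quantity $\dim(\Theta(\mu^*))$ from the Definition is the honest dimension of a linear subspace of the symmetry group (with the $+2$ in the time-independent case).

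The final step is to identify $\Sigma_k$ as defined here with White's $k$-th stratum for this function and invoke \cite{White}, Theorem 8.2: the function is the density, hypothesis (1) (subsequential compactness of blowups) is Lemma \ref{tangmeasexist}, hypothesis (2) (the structural/translation compatibility) follows from the blowup construction and the scaling identity above, and hypothesis (3) (parabolic-scaling invariance of limits) is Lemma \ref{lem:8.3.2LWYM}. White's conclusion then gives $\dim_{\mathcal{P}}(\Sigma_k)\le k$ and countability of $\Sigma_0$. The step I expect to require the most care is the bookkeeping in matching conventions: White works with a general semigroup of homotheties, and one must verify that the parabolic dilations $\mathsf{P}_{z_0,\la}$ with the Yang--Mills scaling weight $\la^{2-n}$, together with spatial and temporal translations, realize exactly the group structure his argument needs, and that his ``spine dimension'' coincides with $\dim(\Theta(\mu^*))$ — in particular that the time-translation-invariant case genuinely contributes the extra $+2$, which is why Proposition \ref{prop:8.3.4LWYM}, and not merely Lemma \ref{lem:8.3.2LWYM}, is needed.
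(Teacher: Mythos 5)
Your proposal is correct and follows the same route as the paper: both reduce the theorem to White's Theorem 8.2 by verifying subsequential compactness of tangent measures (Lemma \ref{tangmeasexist}), compatibility of the density with the blowup construction, and parabolic dilation invariance of tangent measures (Lemma \ref{lem:8.3.2LWYM}). Your additional remark that Proposition \ref{prop:8.3.4LWYM} is what makes $\dim(\Theta(\mu^*,\cdot))$ a genuine spine dimension is a reasonable elaboration of a point the paper leaves implicit, but it does not constitute a different argument.
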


\section{Characterization of strong convergence}\label{s:charstrconv}

In this section we prove Theorem \ref{thm:weaktostrong} (stated more precisely
as Theorem \ref{weaktostrongH12} below), which characterizes when the weak
convergence in $H^{1,2}$ for
sequences as in Theorem \ref{fullweakcompactthm} can be improved to strong
convergence.  In particular, we know this means that the defect measure is
nontrivial, and we use this to obtain refined estimates on tangent measures,
eventually leading to a further blowup sequence which yields the required
Yang-Mills connection.

\begin{thm} \label{weaktostrongH12} Suppose $\{\N^i_t\}$ is a sequence of smooth
solutions to Yang-Mills flow on $[-1,0]$ with 
\begin{align*}
\sup_i \int_{M \times [-1,0]} \left( \brs{\frac{\del \N^i_t}{\del t} }^2 +
\brs{F_{\N_t^i}}^2 \right) dV dt < \infty.
\end{align*}
Furthermore, suppose $\{\N^i_t\} \to \N^{\infty}_t$ weakly in $H^{1,2}_{loc}$. 
Then
exactly
one of the following holds:
\begin{itemize}
\item There exists a blowup sequence converging to a
Yang-Mills connection on $S^4$.
\item One has
\begin{align*}
\brs{F_{\N^i_t}}^2 \, dV \, dt \to \brs{F_{\N^{\infty}_t}}^2 \, dV \, dt
\end{align*}
as convergence of Radon measures, and hence $\{\N^i_t\} \to \N^{\infty}_t$
strongly
in $H^{1,2}_{loc}$.  Thus $\N^{\infty}_t$ is a weak solution of Yang-Mills flow
satisfying
$\mathcal P^{n-2}(\Sigma) = 0$.
\end{itemize}
\begin{proof} 

We adopt the setup of the previous sections in this proof.  In particular, we
assume we have a particular blowup sequence together with a limiting tangent
measure $\mu^*$.  Moreover, various results from \S \ref{limmeasstruct} were
established which apply to almost every point in the singular set.  We will
assume without loss of generality that our tangent measure arises from a blowup
sequence around one of these points, so that the Lemmas of \S
\ref{limmeasstruct} apply.  In particular, in the discussion below we will refer
to a sequence $\{\N_t^i\}$ but this will refer to a \emph{blowup} sequence, not
the original given sequence of the statement.

\begin{lemma}  \label{mustarmeasure} For $t \in
(-4,0]$, we have $\mathcal{H}^{n-4} \brk{\Sigma_t^*} > 0$.
\begin{proof}
Suppose to the contrary there is some $t_0
\in (-4,0]$ such that $\mathcal H^{n-4}(\Sigma_{t_0}^*) = 0$. Then for all $\ge
> 0$, there exists some $\delta_{\ge} >0$ and a covering of $\Sigma_{t_0}^*$ of
the form $\{ B_{r_j}(x_j) \}_{i \in \mathbb{N}}$, with $x \in \Sigma_{t_0}^*$
and $0 < r_j \leq \delta_{\ge}$ satisfying
\begin{equation*}
\sum_{j=1}^{\infty}r_j^{n-4} < \ge.
\end{equation*}
Now, because $\mu^*_{t_0}\brk{ B_1 \backslash \prs{\bigcup_{j \in \mathbb{N}}
B_{r_j}(x_j) }} = 0,$ then by a diagonalization argument we may choose a
subsequence $\{ \N^{i}_t \}$ such that
\begin{equation} \label{mustarmeasure10}
\lim_{{i} \to \infty} \tfrac{1}{2}\int_{B_1 \backslash \bigcup_{j \in
\mathbb{N}} B_{r_j}(x_j)} \brs{ F_{\N^{i}_{t_0}}}^2 \, dV = 0.
\end{equation}

Furthermore we will use (\ref{Phimono}) to estimate the curvature on balls in
the cover.  We choose a cutoff function $\phi$ for a ball of radius $1$, and
further fix some radius $R$.  Note that for the compact set $\supp \phi$ there
is a uniform estimate for the $L^2$ norm of the Yang-Mills energy.  This follows
from the argument of Lemma \ref{tangmeasexist}, which shows that the sequence of
blowup measures is uniformly locally finite.  In particular, there exists $K <
\infty$ such that
\begin{align*}
\int_{\supp \phi} \brs{F_{\N^i_{t_0}}}^2 dV \leq K.
\end{align*}
Hence using (\ref{Phimono}) we estimate for all $i,j \in \mathbb N$,
\begin{align*}
\tfrac{r_j^{4-n}}{2} \int_{B_{r_j}(x_j)} \brs{F_{\N_{t_0}^{i}}}^2 \, dV
&\leq \tfrac{1}{2 e} r_{j}^4 \int_{B_{r_j}(x_j) \times\{ (t_0 + r_j^2) - r_j^2
\}}
\brs{F_{\N^{i}_t}}^2 \phi^2 G_{(x_i,t_0 + r_j^2)} dV\\
&\leq \tfrac{1}{2 e} \Phi\prs{\N_t^{i}, \prs{x_j, t_0 + r_j^2}, r_j} \\
&\leq \tfrac{1}{2 e} \Phi\prs{\N^{i}_t, \prs{x_j, t_0 + r_j^2}, R} + C K \prs{R
-
r_j} \\
&\leq \tfrac{R^{4-n}}{2e} \int_{\bRn \times \{t_0+r_j^2-R^2\}}
\brs{F_{\N_t^{i}}}^2 \phi^2 \, dV + CK \prs{R-r_j}\\
&\leq C\prs{R,K}.
\end{align*}
Therefore  we have that
\begin{align*}
\tfrac{1}{2}\int_{\bigcup_{j \in \mathbb{N}}B_{r_j}(x_j)}
\brs{F_{\N_{t_0}^{i}}}^2 dV
&\leq \tfrac{1}{2}\sum_{j\in \mathbb{N}} \int_{B_{r_j}(x_j)}
\brs{F_{\N_{t_0}^{i}}}^2 dV\\
&\leq C \sum_{j \in \mathbb{N}} r_j^{n-4}\\
&\leq C \ge.
\end{align*}
Choosing $\ge < \tfrac{\ge_0}{16C}$ and combining with (\ref{mustarmeasure10})
yields, for $i$ sufficiently large,
\begin{equation} \label{mustarmeasure30}
\int_{B_1} \brs{F_{\N^i_{t_0}}}^2 \, dV \leq \frac{\ge_0}{2}.
\end{equation}
Also, using Lemma \ref{wktostronglemma5} we have
\begin{equation} \label{mustarmeasure40}
\lim_{i \to \infty} \int_{P_2}\brs{\frac{\del \N_t^i}{\del t}}^2 dV = 0.
\end{equation}
Using Lemma \ref{lem:8.2.1LWYM} we find that for any $\phi \in
C_0^{\infty}(B_2)$ and $-4 < t_1 < t_2 \leq 0$ one has
\begin{equation} \label{mustarmeasure50}
\tfrac{1}{2} \int_{B_2}\prs{\brs{{F_{t_2}^i}}^2-\brs{{F_{t_1}^i}}^2} \phi \, dV
= - \int_{t_1}^{t_2}\int_{B_2}\prs{\brs{\del_t \N_t^i}^2 \phi + \ip{\N \phi
\hook {F_t^i},\del_t \N_t^i}} \, dV\, dt.
\end{equation}
Combining (\ref{mustarmeasure40})-(\ref{mustarmeasure50}) shows that the
limiting measure $\mu_t^*(\phi)$ is independent of time for $t \in (-4,0]$. 
Applying (\ref{mustarmeasure30}) again yields
\begin{align*}
\tfrac{1}{2}\int_{P_1}\brs{{F_t^i}}^2 dV \, dt
&\leq \tfrac{1}{2}\sup_{t \in \brk{-1,0}} \int_{B_1} \brs{{F_t^i}}^2 \, dV\\
&\leq \tfrac{1}{2} \int_{B_1} \brs{{F_{t_0}^i}}^2 \, dV + o(i)\\
&\leq \frac{\ge_0}{4}. 
\end{align*}
This is a contradiction to the assumption that $(0,0) \in \Sigma^*$. 
\end{proof}
\end{lemma}

\begin{prop} \label{singularsetisplane} There is a linear subspace $\mathscr{P}$
of dimension $\prs{n-4}$ such that for all $t < 0$ one has that $\supp(\mu^*_t)
= \mathscr{P}$
 \begin{proof} 
First, by Lemma \ref{wktostronglem15}, we have that $\Theta\prs{\nu,z} =
\Theta\prs{\mu,z}$ is $\mathcal{P}^{n-2}$-approximately continuous at $z_0$ for
$z
\in \Sigma$ and $\Theta\prs{\mu,z}$ is upper semicontinuous with respect to
$z$, we conclude that for $\mathcal{P}^{n-2}$-almost all $z \in
\Sigma^*$,
we have
\begin{equation*}
\Theta\prs{\mu^*,z} \geq \Theta\prs{\mu,z_0}.
\end{equation*}
Also, it follows from Lemma \ref{lem:8.3.3LWYM} that
\begin{equation*}
\Theta\prs{\mu^*,z} \leq \Theta\prs{\mu^*,0} = \Theta\prs{\mu,z_0}.
\end{equation*}
Hence for $\mathcal{P}^{n-2}$ a.e. $z \in \Sigma_*$,
\begin{equation} \label{singsetplaneloc10}
\Theta\prs{\mu^*,z} = \Theta\prs{\mu^*,0}.
\end{equation}
We now show that in fact all points in $\Sigma_*$ have maximal density.
In particular, by Proposition \ref{prop:8.3.4LWYM} there is some set
$\mathscr{S} \subset \bRn$ with $\mathcal{H}^{n-4}\prs{\mathscr{S}} = 0$ and an
$(n-4)$-dimensional plane $\mathscr{P} \subset \mathbb{R}^n$ such that
$\mathscr{S} \cap \mathscr{P} = \varnothing$, and
\begin{align*}
\Sigma_t^* = \mathscr{S} \cap \mathscr{P}
\end{align*}
for all $t$.  We claim that in fact $\mathscr{S} = \varnothing$. 
Suppose to the contrary we had some $z \in
\mathscr{S}$.  Note that by construction, it must hold that $0 < \Theta(\mu^*,z)
< \Theta(\mu^*,0)$.  By Lemma \ref{lem:8.3.3LWYM} (2) we have that for all $w
\in
\mathscr{P}$,
\begin{equation*}
\Theta(\mu^*,z) = \Theta(\mu^*, w + (z-w))= \Theta\prs{\mu^*,w +
\mathsf{P}_{\gl}(z - w)}.
\end{equation*}
Applying this for $w \in B_{\ge}^{n-4}(0) \subset \mathscr{P}$ and $\gl \in
[1-\ge,1]$ yields a set of positive $\PP^{n-2}$-measure in $\Sigma^*$ on which
$\Theta(\mu^*, \cdot) = \Theta(\mu^*, z) < \Theta(\mu^*,0)$, contradicting
(\ref{singsetplaneloc10}).
 \end{proof}
\end{prop}

Using this characterization of the singular set of the blowup limit, we can
refine our estimates on the blowup sequence to obtain further structure on the
blowup limit.  Without loss of generality we can assume that $\mathscr{P} =
\mathbb R^{n-4} \subset \mathbb R^n$ is the standard embedding in the first
$n-4$ coordinates, and we express a general point as $X = (x,y)$ where $x \in
\mathbb R^4, y \in \mathbb R^{n-4}$.  We first show two lemmas which give
improved vanishing results for the time derivative of the connection as well as
for the curvature in directions along the singular locus.

\begin{lemma}  \label{wktostronglemma10} Given the setup above and $0 < t_1 <
t_2
\leq 1$, one has
\begin{gather*}
\lim_{i \to \infty} \int_{-t_2}^{-t_1} \int_{B_1^{n}} \prs{\brs{ \frac{\del
\N^i_t}{\del t}}^2 + \sum_{j=1}^{n-4} \brs{\frac{\del}{\del y_j} \hook
F_{\N^i_t}}^2} \, dV \, dt = 0.
\end{gather*}
\begin{proof} We first observe that by rescaling the result of Lemma
\ref{wktostronglemma5} we observe that
\begin{align} \label{wktostronglemma1020}
\lim_{i \to \infty} \int_{-t_2}^{-t_1} \int_{B_1^{n}} \brs{ \frac{\del
\N^i_t}{\del t}}^2 \, dV \, dt = 0.
\end{align}
Now let $\xi_j = 2 \frac{\del}{\del y_j}$.  Since we know that the limiting
density $\Theta$ is a
multiple of the Hausdorff measure of the given $\mathbb R^{n-4}$ on each time
slice, applying the monotonicity formula (\ref{Psimono}) with centers $(X_0,t_0)
= (\prs{0,\xi_j},0)$ implies that for any $\rho > 0$ we have
\begin{align*}
0&= \lim_{i \to \infty} \int_{\rho}^1 r \int_M\int_{-4r^2}^{-r^2} 
\frac{\brs{\prs{X-\xi_j} \hook F_t^i + 2t \del_t {\N}^i_t}^2}{\brs{t}} \phi^2
G_{0,\xi_j} \, dV \, dt \, dr\\
& \geq \lim_{i \to \infty} C_{\rho} \int_{\rho}^1 r
\int_{B_1}\int_{-4r^2}^{-r^2}  \frac{\brs{\prs{X-\xi_j} \hook F^i_t + 2t \del_t
{\N}^i_t}^2}{\brs{t}} \, dV \, dt \, dr.
\end{align*}
Note that the second equality follows since, for a given $\rho$, on $B_1$ we
have that there is a constant $C_{\rho} \in (0,\infty)$ dependent solely on
$\rho$ such that $G_{0,\xi_j} \phi^2 \geq C_{\rho} \in (0,\infty)$.

Next we apply Fubini's theorem to switch the integration bounds $dr \, dt$ to
$dt \, dr$. In that case we have that if we set 
\begin{equation*}
I_i(r,t) := C_{\rho} r \int_{B_1} \frac{\brs{(X-\xi_j) \hook
F_{{\N}^i_t} + 2t \del_t {\N}^i_t }^2}{\brs{t}} \, dV
\end{equation*}
then applying Fubini's theorem to the regions corresponding to the variables $r$
and $t$ give that
\begin{align*}
0= \lim_{i \to \infty} \left[ \int_{- 4 \rho^2}^{- \rho^2}
\int_{\rho}^{\sqrt{-t}}  I_i(r,t) \, dr \, dt  + \int_{-1}^{- 4 \rho^2}
\int_{\tfrac{1}{4} \sqrt{-t}}^{\sqrt{-t}} I_i(r,t) \, dr \, dt + \int_{-4}^{-1}
\int_{\tfrac{1}{4} \sqrt{-t}}^1  I_i(r,t)  \, dr \, dt. \right]
\end{align*}
\color{black}
Then for $0 < t_1 < t_2 < 1$, if we choose $\rho \leq t_1$, so that $[-t_2^2, -
t_1^2] \subset \brk{ -1, - \rho^2}$ (the unions of the temporal domains of the
first two integrals) then we can conclude that since the sums are $0$ and the
arguments of each integral are positive,
\begin{align*}
0 &=\ \lim_{i \to \infty} \int_{-t_2^2}^{-t_1^2} \int_{B_1}\frac{\brs{(X-\xi_j)
\hook F_{{\N}_t^i} + 2t \del_t {\N}_t^i }^2}{\brs{t}} \, dV \, dt \\
&\geq\ \lim_{i \to \infty} \brs{t_1}^{-1} \int_{-t_2^2}^{-t_1^2} \int_{B_1}
\frac{1}{2} \brs{(X-\xi_j) \hook F_{\N_t^i}}^2 - C \brs{t \del_t
\N_t^i }^2 \, dV \, dt \\
&=\ \lim_{i \to \infty} \brs{t_1}^{-1} \int_{-t_2^2}^{-t_1^2} \int_{B_1}
\frac{1}{2} \brs{(X-\xi_j) \hook F_{{\N}_t^i}}^2\, dV \, dt.
\end{align*}
The second inequality follows using the Cauchy-Schwarz inequality, and the final
line follows from (\ref{wktostronglemma1020}).  Now observing that $\xi_j = 2
\frac{\del}{\del y_j}$ we see that for all $X = (x,y) \in B_1$ we have that
$\brs{\ip{(X -
\xi_j),\frac{\del}{\del y_j}}} \geq 1$.  The result follows.
\end{proof}
\end{lemma}

\begin{lemma} \label{wktostronglemma20}  Given the
setup above, there exists $(y,t) \in B_{1/2}^{n-4} \times
[-\frac{1}{2},-\frac{1}{4}]$ such that
\begin{align*}
\lim_{i \to \infty} & \sup_{0 < r \leq 1} r^{4-n} \int_{B_r^{n-4}(y)}
\int_{B_1^{4} \times \{y\} \times [-1,0]} \brs{\frac{\del \N^i_t}{\del t}}^2 \,
dx \, dt \, dy = 0,\\
\lim_{i \to \infty} & \sup_{0 < r \leq 1} r^{2-n} \int_{B_r^{n-4}(y) \times [t -
r^2,t]} \int_{B_1^{4}  \times \{y\}} \sum_{j=1}^{n-4} \brs{\frac{\del}{\del
y_j} \hook F_{\N^i_t}}^2 \, dx \, dt \, dy = 0.
\end{align*}
\begin{proof} To begin we show a preliminary statement using maximal functions. 
In particular, let
\begin{align*}
f_i
&: B_1^{n-4} \subset \bR^{n-4} \to [0,\infty), \qquad  f_i({y}) =
\int_{\prs{B_1^{4} \times \sqg{ {y}} }  \times \brk{-1,0}} \brs{\del_t
{\N}_i} ^2 \, dx\, dt \\
g_i &: B_1^{n-4} \times \brk{-1, -\tfrac{1}{8}} \to [0,\infty), \qquad
g_i({y},{t}) = \int_{B_1^{4} \times \sqg{y}} \sum_{j=1}^{n-4}
\brs{\xi_{j} \hook {F}_{{t}}^i }^2 \, dx.
\end{align*}
Using these two quantities, we define two local Hardy-Littlewood maximal
functions of $f_i$ on $B_1^{n-4}$ and $g_i$ on $B_1^{n-4} \times
\brk{-1,-\tfrac{1}{8}}$ by, for $y \in B_1^{n-4}$,
\begin{align*}
M(f_i)(y)
&= \sup_{0 \leq r \leq 1} r^{4-n} \int_{B_r^{n-4}(y)} f_i \, d{y}.
\end{align*}
Furthermore for $\prs{y,t} \in B_1^{n-4} \times \brk{-1,-\tfrac{1}{8}}$,
\begin{align*}
M (g_i) (y, t)&= \sup_{0<r<1} r^{2-n} \int_{{t}-r^2}^{{t}} \int_{B_r^{n-4}(y)}
g_i \, dy \, dt.
\end{align*}
By applying Lemma \ref{wktostronglemma10} we can choose a subsequence such that
$\int_{B_1^{n-4}} f_i dy \leq 4^{-i}$.  Combining this with the Hardy-Littlewood
weak $L^1$ estimate we obtain a subsequence such that
\begin{align*}
\mu \{ y \mid M(f_i) \geq 2^{-i} \} \leq \frac{C}{2^{-i}} \int_{B_1^{n-4}} f_i
\, dy
\leq C 2^{-i}.
\end{align*}
In particular, for $I$ chosen sufficiently large we have
\begin{align*}
\mu \left( \bigcup_{i \geq I} \{ y \mid M(f_i) \geq 2^{-i} \} \right) &\leq\ C
2^{-I} < \frac{1}{2} \mu \prs{B_1^{n-4}}.
\end{align*}
Thus $B_1^{n-4} \backslash \bigcup_{i \geq I} \{ y | M(f_i) \geq 2^{-i} \}$ is
nonempty, and any point $y$ in that set satisfies
\begin{align*}
\lim_{i \to \infty} M(f_i)(y) = 0.
\end{align*}
Combining this with an identical argument yields $(y,t) \in
B_{1/2}^{n-4} \times [-\frac{1}{2},-\frac{1}{4}]$ and a further
subsequence such that
\begin{equation*}
\lim_{i \to \infty} M(g_i)(y,t) = 0, \qquad \lim_{i \to \infty} M(f_i)(y) = 0.
\end{equation*}
The result follows.
\end{proof}
\end{lemma}

For the following Lemma we will suppress bundle indices for notational
simplicity. Moreover, we will refer to coordinate directions $\frac{\del}{\del
x^i}$ with unbarred indices, and $\frac{\del}{\del y^i}$ directions with barred
indices.  For an index which runs over both types of vectors we use $I$ and
$J$. 

\begin{lemma}  \label{wktostronglemma30} One has
\begin{align*}
\frac{\del}{\del y_k}& \brk{\int_{t - \delta_0^2}^{t} \int_{\bR^4}\phi^2(x)
\left[ \brs{ F_{\N }}^2 \right|_{(x,y,t)} \, dx \, dt} \\
&= 4 \int_{t - \delta_0^2}^{t} \int_{\bR^4} \left( (\N_j\phi^2) F_{Ij} -
\phi^2 \left( \frac{\del \N}{\del t} \right)_I \right) F_{\bk I} \, dx \, dt - 4
\frac{\del}{\del y_j}  \brk{\int_{t -
\delta_0^2}^{t} \int_{\bR^4} \phi^2 \prs{ F_{I \bj} F_{\bk I}} \, dx \, dt}.
\end{align*}

\begin{proof} With the notational conventions as described above we have
\begin{align}
\begin{split}\label{eq:Ftriangle}
\N_{\bk} \prs{ F_{ I J }  F_{ I J } }
&= 2 F_{ I J } \prs{ \N_{\bk}F_{ I J }  }\\
&= - 2 F_{ I J } \prs{ \N_{J} F_{\bk I }  + \N_{I} F_{J \bk}} \\
&= - 4 F_{ I J } \prs{ \N_{J} F_{\bk I } } \\
&= - 4 \N_{J} \prs{ F_{ I J }  F_{\bk I } }  - 4 \prs{ \N_{J} F_{ J I }}  F_{\bk
I }.
\end{split}
\end{align}
Lastly, we expand out
\begin{align}
\begin{split}\label{eq:Fcoordmanip}
\brs{F}^2
&= F_{IJ} F_{IJ} = F_{i j} F_{ij }+F_{i \bj} F_{i \bj } + F_{\bi j} F_{\bi j}+
F_{\bi \bj} F_{\bi \bj}.
\end{split}
\end{align}
If we differentiate \eqref{eq:Fcoordmanip} and apply 
\eqref{eq:Ftriangle} to the resulting terms then this breaks down into
\begin{align}
\begin{split}\label{eq:gradF2lem}
\N_{\bk} \brk{ \brs{F}^2 } 
&= -4 \N_j \brk{F_{ij} F_{\bk i}}  - 4 \N_{\bj}\brk{ F_{\bi \bj} F_{\bk \bi} } -
4 \N_{\bj}  \brk{ F_{i \bj} F_{\bk i}} - 4 \N_{j}  \brk{ F_{\bi j} F_{\bk \bi}}
\\
& \hsp - 4 \prs{\N_j F_{ji}} F_{\bk i} - 4 \prs{\N_{\bj} F_{\bj \bi }} F_{\bk
\bi} - 4 \prs{ \N_{\bj} F_{\bj i} } F_{\bk i} - 4 \prs{ \N_{j} F_{j \bi} }
F_{\bk \bi} \\
& =-4 \prs{  \N_j \brk{ F_{ij} F_{\bk i} + F_{\bi j} F_{\bk \bi} } + \N_{\bj}
\brk{F_{\bi \bj} F_{\bk \bi} + F_{i \bj} F_{\bk i}} }  + 4 D_{J}^* F_{JI} F_{\bk
I}.
\end{split}
\end{align}
With these pointwise quantities, we integrate \eqref{eq:gradF2lem} with a cutoff
function $\phi$ and obtain
\begin{align*}
\frac{\del}{\del y_k} & \brk{\int_{t - \delta_0^2}^{t} \int_{\bR^4}\phi^2(x)
\left[
\brs{ F_{\N }}^2 \right|_{(x,y,t)}\, dx \, dt} \\
&=- 4 \int_{t - \delta_0^2}^{t} \int_{\bR^4} \phi^2 \N_j \brk{ F_{ij} F_{\bk
i}  + F_{\bi j} F_{\bk\bi}} \, dx \, dt - 4 \int_{t - \delta_0^2}^{t}
\int_{\bR^4}  \phi^2 \N_{\bj} \brk{ F_{i
\bj} F_{\bk i}  + F_{\bi \bj} F_{\bk \bi}} \, dx \, dt \\
& \hsp + 4 \int_{t - \delta_0^2}^{t} \int_{\bR^4} \phi^2 \prs{D_J^* F_{JI}}
F_{\bk I} \, dx \, dt \\
&= 4 \int_{t - \delta_0^2}^{t} \int_{\bR^4} \prs{\N_j\phi^2} \prs{ F_{ij}
F_{\bk i}  + F_{\bi j} F_{\bk\bi}} \, dx \, dt - 4 \frac{\del}{\del y_j}
\brk{\int_{t -
\delta_0^2}^{t} \int_{\bR^4} \phi^2 \prs{ F_{i \bj} F_{\bk i}  + F_{\bi \bj}
F_{\bk \bi}} \, dx \, dt} \\
& \hsp - 4 \int_{t - \delta_0^2}^{t} \int_{\bR^4} \phi^2 \prs{ \del_t \N_I}
F_{\bk I} \, dx \, dt\\
&= 4 \int_{t - \delta_0^2}^{t} \int_{\bR^4} \left( (\N_j\phi^2) F_{Ij} -
\phi^2 \left(\frac{\del \N}{\del t} \right)_I \right) F_{\bk I} \, dx \, dt - 4
\frac{\del}{\del y_j} \brk{\int_{t -
\delta_0^2}^{t} \int_{\bR^4} \phi^2 \prs{ F_{I \bj} F_{\bk I}} \, dx \, dt},
\end{align*}
as required.
\end{proof}
\end{lemma}

We will use this lemma in conjunction with ``Allard's strong constancy lemma,''
an effective version of the Divergence Theorem which we restate here for
convenience.

\begin{lemma} (\cite{Allard} pp. 3) \label{allardlemma} Suppose $\psi$, $f$, and
$Z$ are smooth on
$B_1$ and satisfy
\begin{equation*}
\N \psi = f + \divg Z.
\end{equation*}
and
\begin{equation*}
\brs{\brs{f}}_{L^1(B_1)} + \brs{\brs{Z}}_{L^1\prs{B_1}} \leq \delta.
\end{equation*}
Then for all $\delta_1 > 0$, there is a $\delta_0 > 0$, depending on $\delta_1$
and $\brs{\brs{\psi}}_{L^1(B_1)}$ such that, whenever $\delta \leq \delta_0$,
\begin{equation*}
\brs{\brs{\psi - \bar{\psi}}}_{L^1(B_1)} \leq \delta_1.
\end{equation*}
where $\bar{\psi}$ denotes the average value of $\psi$ on $B_1$.
\end{lemma}

\begin{lemma} \label{wktostronglemma40} Given a $(y,t) \in B_{1/2}^{n-4} \times
[-\frac{1}{2},-\frac{1}{4}]$ as in Lemma \ref{wktostronglemma20}, there exists a
universal constant $\gL$
and sequences $x_i \to 0, \gd_i \to 0$ such that
\begin{gather*}
\begin{split}
\frac{\ge_0}{\gL} &=\ \gd_i^{-2} \int_{B_{\gd_i}^{4}(x_i) \times [t -
\gd_i^2,t]} \brs{F_{{\N}^i}}^2(x,y,t) \, dx \, dt\\
&=\ \max \left\{ \gd_i^{-2} \int_{B_{\gd_i}^{4} (\til{x}) \times [t -
\gd_i^2,t]} \brs{F_{{\N}^i}}^2(\til{x},y,t) \, dx\, dt \mid \til{x} \in
B_{\frac{1}{2}}^{4} \right\}.
\end{split}
\end{gather*}
\begin{proof} Given $(y,t)$, we fix some $\gL > 0$, then as each blowup
connection
${\N}^i_t$ is smooth we may first choose a constant $\gd_i$ which is the
smallest positive number such that
\begin{align} \label{wktostronglemma4011}
\max \left\{ \gd_i^{-2} \int_{B_{\gd_i}^{4}(\til{x}) \times [t -
\gd_i^2,t]} \brs{{{F}^i}}^2(\til{x},y,t) \ |\ \til{x} \in
B_{\frac{1}{2}}^{4} \right\} = \frac{\ge_0}{\gL}.
\end{align}
Choosing $x_i$ as some point in realizing the maximum defined above, all that
remains to check is that $\gd_i \to 0$, $x_i \to 0$.  First, suppose $\gd_i \geq
\gd_0 > 0$.  Fix $\phi \in C_0^{\infty}(B_{\gd_0}^{4}, [0,1])$, and let
\begin{align*}
\psi(y) := \gd_0^{-2} \int_{t - \gd_0^2}^{t} \int_{B_{\gd_0}^4} \phi^2
\brs{{{F}^i}}^2 (x,y,s) \, dx \, ds.
\end{align*}
Now observe that the result of Lemma \ref{wktostronglemma30} can be interpreted
as $\N \psi = f + \divg Z$, with $f$ and $Z$ defined by the equality.  It
follows from Lemma \ref{wktostronglemma20} that
\begin{align*}
\lim_{i \to \infty} \brs{\brs{f}}_{L^1(B_{\gd_0}^{n-4})} +
\brs{\brs{Z}}_{L^1(B_{\gd_0}^{n-4})} = 0.
\end{align*}
Then we observe using Lemma \ref{allardlemma} and (\ref{wktostronglemma4011})
that
\begin{gather}\label{wktostronglemma4015}
\begin{split}
\lim_{i \to \infty} \gd_0^{2-n} \int_{P_{\gd_0}((0,y),t)} \brs{{{F}^i_t}}^2 dV
dt
&=\ \lim_{i \to \infty} {\bar{\psi}}\\
&=\ \lim_{i \to \infty} \gd_0^{-4} \int_{B_{\gd_0}^{4}} {\bar{\psi}} \, dx\\
&=\ \lim_{i \to \infty} \gd_0^{-4} \int_{B_{\gd_0}^{4}} \prs{\bar{\psi} - \psi +
\psi} \, dx\\
&\leq\ \lim_{i \to \infty} \left[ \gd_0^{-4} \brs{\brs{\psi -
\bar{\psi}}}_{L^1(\bR^4)} + \sup_{B_{\gd_0}^{4}} \psi \right]\\
&\leq\ \frac{\ge_0}{\gL}.
\end{split}
\end{gather}
This contradicts that $((0,y),t) \in \Sigma^*$, hence $\gd_i \to 0$.  Now we
note that the sequence $((x_i,y),t)$ develops concentration of $\brs{F^i_t}$,
and
hence must limit to a singular point, which forces $x_i \to 0$.
\end{proof}
\end{lemma}

With this sequence we can perform a further rescaling to finally obtain a
Yang-Mills connection as blowup limit.  In particular, define the blowup
sequence
\begin{align*}
{\til{\gG}} ^i (x,y,t) &=\ \gd_i \gG ((x_i,y_i) + (\gd_i x, \gd_i y), t_i +
\gd_i^2 t).
\end{align*}
Let us observe some basic properties of this blowup sequence.  In particular, by
rescaling the estimates of Lemmas \ref{wktostronglemma20} and
\ref{wktostronglemma40} we obtain
\begin{gather} \label{wktostronglemma4020}
\begin{split}
\frac{\ge_0}{\gL} &=\ \int_{B_{1}^{4} \times [-1,0]}
\brs{\til{F}^i}^2(0,0,t) \, dx \, dt = \max \left\{  \int_{B_1^{4}
(\til{x}) \times [-1,0]} \brs{\til{F}^i}^2(x,0,t)\, dx \, dt \ |\ x \in
\gd_i^{-1}
B_{\frac{1}{2}}^{4} \right\},\\
0 &=\ \lim_{i \to \infty} \sup_{r \in (0,\frac{1}{4 \gd_i})} r^{4-n}
\int_{B_r^{n-4}(0)} \int_{B_{\frac{1}{2 \gd_i}}^{4} \times \{0\} \times
[-\gd_i^{-2},0]} \brs{\frac{\del \til{\N}^i}{\del t}}^2 \, dx \,  dt \, dy,\\
0 &=\ \lim_{i \to \infty} \sup_{r \in (0,\frac{1}{4 \gd_i})} r^{2-n}
\int_{B_r^{n-4}(0) \times
[- r^2,0]} \int_{B_{\frac{1}{2 \gd_i}}^{4}  \times \{y\}} \sum_{j=1}^{n-4}
\brs{\frac{\del}{\del y_j} \hook \til{F}^i_t}^2 \, dx \, dt \, dy.
\end{split}
\end{gather}

\begin{lemma} \label{wktostronglemma50} The sequence $\{\til{\N}^i_t\}$
converges
strongly to a nonflat Yang-Mills connection on $S^4$.
\begin{proof} We use the estimates of (\ref{wktostronglemma4020}) and argue as
in the estimate (\ref{wktostronglemma4015}) to show an
energy estimate of the form
\begin{align} \label{wktostronglemma5010}
\int_{P_{\frac{3}{2}} ((\til{x},0),0)} \brs{{\til{F}^i_t}}^2 \, dV \, dt \leq
\frac{\ge_0}{2} \quad \mbox{for all } \til{x} \in \gd_i^{-1}
\prs{B_{\frac{1}{2}}^{4}}.
\end{align}
Given this, we can complete the proof as follows.  There is a local $H^{1,2}$
estimate for $\til{\N}^i_t$ and hence we can choose a subsequence so that
$\til{\N}^i_t \to \til{\N}^{\infty}_t$ weakly in $H_{\mbox{loc}}^{1,2}(\mathbb
R^n
\times \mathbb (- \infty, 0])$.  However, using (\ref{wktostronglemma4020}) we
have that
\begin{align*}
\int_{\mathbb R^n \times (-\infty,0]} \prs{ \brs{ \frac{\del 
\til{\N}^{\infty}_t}{\del t}}^2 +
\sum_{j=1}^{n-4} \brs{ \frac{\del}{\del y_j} \hook \til{F}^{\infty}_t}^2
} \, dV \, dt= 0.
\end{align*}
Using (\ref{wktostronglemma5010}) and Theorem \ref{thm:eregularity} we obtain
convergence of $\til{\N}^i_t$ to $\til{\N}^{\infty}_t$ in $C^{k,\ga}(K)$ for any
compact set $K \subset \mathbb R^n \times (-\infty, 0]$.  In particular, using
\eqref{wktostronglemma4020} we obtain
\begin{align*}
\frac{\ge_0}{\gL} \leq \int_{\mathbb R^4} \brs{\til{F}^{\infty}_t}^2 \, dx <
\infty,
\end{align*}
hence $\til{\N}^{\infty}_t$ is not flat. The result follows.
\end{proof}
\end{lemma}
\noindent Lemma \ref{wktostronglemma50} finishes the proof of the theorem.
\end{proof}
\end{thm}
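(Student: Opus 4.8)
The plan is to prove the dichotomy by contradiction: assume $\{\N^i_t\}$ does \emph{not} converge strongly to $\N^{\infty}_t$ in $H^{1,2}_{loc}$, and from this produce a blowup sequence converging to a nonflat Yang-Mills connection on $S^4$. Mutual exclusivity of the two alternatives is immediate, since a nontrivial blowup limit means $\brs{F}^2$ concentrates at some scale, so the defect measure $\nu$ is nonzero and, by Lemma \ref{wktostronglemma17}, strong convergence fails; conversely, if the contradiction argument never applies then strong convergence holds, and Lemmas \ref{wktostronglemma17} and \ref{weaktostronglem10} identify this with $\nu \equiv 0$ and $\mathcal P^{n-2}(\Sigma) = 0$, with the limit then a weak solution by Theorem \ref{fullweakcompactthm}. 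So assume strong convergence fails. By Lemma \ref{wktostronglemma17}, $\mathcal P^{n-2}(\Sigma) > 0$ and $\nu(M \times [-1,0]) > 0$. Using the $\PP^{n-2}$-a.e.\ vanishing and approximate-continuity statements of \S\ref{limmeasstruct} (Lemmas \ref{weaktostronglem10}, \ref{wktostronglemma5}, \ref{wktostronglem15}), I fix a ``good'' point $z_0 \in \Sigma$ at which all of these hold, parabolically rescale $\mu$ about $z_0$ via Lemma \ref{tangmeasexist} to obtain a tangent measure $\mu^* \in T_{z_0}(\mu)$, and henceforth work with a blowup sequence, still written $\{\N^i_t\}$, realizing $\mu^*$; the estimates of \S\ref{limmeasstruct} rescale to this sequence. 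By Lemma \ref{lem:8.3.2LWYM} the measure $\mu^*$ is parabolically self-similar, and Proposition \ref{prop:8.3.4LWYM} gives that its maximal-density set is $V \times (-\infty,a]$ for a linear subspace $V$ on which the density is time-independent.

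The next step is to pin down the support, showing $\mathcal H^{n-4}(\Sigma^*_t) > 0$ for all $t \in (-4,0]$ (Lemma \ref{mustarmeasure}). If not, a Vitali cover of $\Sigma^*_{t_0}$ by small balls, together with the $\Phi$-monotonicity (\ref{Phimono}) to bound the curvature energy on these balls and a diagonal extraction, forces the $t_0$-slice curvature energy on the unit ball below $\tfrac{\ge_0}{2}$; the energy-monotonicity identity of Lemma \ref{lem:8.2.1LWYM} then propagates this smallness to nearby times, contradicting $0 \in \Sigma^*$. Combining this Hausdorff-dimension lower bound with the stratification above yields (Proposition \ref{singularsetisplane}) that for every $t < 0$, $\supp \mu^*_t$ equals a single $(n-4)$-dimensional linear subspace $\mathscr P$, and every point of $\mathscr P$ has maximal $\mu^*$-density.

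Finally, I zoom into the normal $4$-slice. Writing $X = (x,y) \in \bR^4 \times \bR^{n-4}$ with $\mathscr P = \{x = 0\}$, self-similarity together with translation-invariance along $\mathscr P$, fed through the monotonicity formula (\ref{Psimono}) with centers translated along the $y_j$-axes, force the limiting vanishing of $\partial_t \N^i$ and of the tangential contractions $\tfrac{\del}{\del y_j}\hook F_{\N^i}$ over parabolic balls (Lemmas \ref{wktostronglemma10}, \ref{wktostronglemma20}); a Hardy--Littlewood maximal function argument then selects a generic slice $(y,t)$ on which these have vanishing rescaled averages at all scales. On that slice I choose, for each $i$, the smallest scale $\gd_i$ and a point $x_i$ at which the rescaled $4$-dimensional curvature energy attains a fixed fraction $\ge_0/\gL$ of the $\ge$-regularity threshold (Lemma \ref{wktostronglemma40}), and perform a second blowup about $((x_i,y),t)$ at scale $\gd_i$, producing a sequence $\{\til{\N}^i_t\}$. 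Rescaling Lemmas \ref{wktostronglemma20} and \ref{wktostronglemma40} gives this sequence curvature energy $\ge_0/\gL$ in the unit $4$-ball, at most $\ge_0/\gL$ in any unit $4$-ball, and vanishing $\partial_t$ and tangential curvature. Upgrading the ``at most $\ge_0/\gL$'' statement to a genuine $\tfrac{\ge_0}{2}$ parabolic-ball bound via the Allard strong-constancy argument and then applying Theorem \ref{thm:eregularity} yields $C^{k,\ga}_{loc}$ convergence to a limit $\til{\N}^{\infty}_t$ with $\partial_t \til{\N}^{\infty} = 0$ and no $y$-dependence: a static solution of Yang-Mills flow on $\bR^4$, i.e.\ a Yang-Mills connection, nonflat since $\ge_0/\gL \leq \int_{\bR^4}\brs{\til{F}^{\infty}}^2 < \infty$, and hence extending to a nonflat Yang-Mills connection on $S^4$ by removable singularities.

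I expect the main obstacle to be this final extraction --- establishing $\gd_i \to 0$ and $x_i \to 0$ and that the energy neither escapes to infinity nor splits off, so the limit is genuinely nonflat with finite energy concentrated at one point. This is precisely where Lemma \ref{wktostronglemma30}, which rewrites $\tfrac{\del}{\del y_k}$ of the slice-energy as a divergence plus terms controlled by $\partial_t \N$ and tangential curvature, combined with Allard's strong constancy lemma \ref{allardlemma}, is essential: it shows that if the scale $\gd_i$ stayed bounded below, the slice-energy would be nearly $y$-independent and of size $\leq \ge_0/\gL$, contradicting that the slice point lies in $\Sigma^*$. The remainder is bookkeeping --- keeping all the parabolic rescalings, generic-slice choices, and threshold constants $\gL, \ge_0, \gd_i$ consistent --- but the conceptual crux is this energy-quantization and no-loss step.
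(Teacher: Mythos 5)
Your proposal is correct and follows essentially the same route as the paper: the dichotomy via the defect measure, reduction to a tangent measure at a generic point of $\Sigma$, the lower bound $\mathcal H^{n-4}(\Sigma^*_t)>0$ and the identification of $\supp \mu^*_t$ with an $(n-4)$-plane of maximal density, the vanishing of $\partial_t \N^i$ and tangential curvature contractions, the maximal-function slice selection, and the second blowup normalized by the Allard strong-constancy argument. The only (harmless) difference is that you make the mutual exclusivity of the two alternatives explicit up front, which the paper leaves implicit.
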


\begin{proof} [Proof of Corollary \ref{cor:weaktostrong}] Without loss of
generality by an overall rescaling we assume $T \geq 2$.  Choose any sequence
$\{t_i\} \to T$, and observe that the sequence of solutions given by restricting
the given solution to $[t_i - 1, t_i]$ satisfies the hypotheses of Theorem
\ref{fullweakcompactthm}.  By hypothesis that $T$ is maximal we know that
$\Sigma \neq \varnothing$.  As shown in Theorem \ref{fullweakcompactthm} the
point
$z \in \Sigma$ is a point of entropy concentration.  Thus we can choose a
sequence of radii $r_i \to 0$ and rescale the parabolic balls $P_{r_i}(z_0)$ to
unit
size, to obtain a sequence of solutions with finite, nonzero entropy.  It
follows easily that the hypotheses of Theorem \ref{fullweakcompactthm} hold for
this sequence.  If the sequence does not converge strongly in $H^{1,2}$, Theorem
\ref{thm:weaktostrong} yields the further blowup sequence which converges to a
Yang-Mills connection on $S^4$.  If this sequence does converge strongly in
$H^{1,2}$, as the $\Psi$ functional is becoming constant along the blowup
sequence, the 
second term of the entropy monotonicity formula of (\ref{Psimono}) converges to
zero, which implies that the
blowup limit is a soliton.
\end{proof}


%
%
\end{document}